\newcommand{\middlewave}[1]{\raisebox{0.5em}{\uwave{\hspace{#1}}}}
\newtheorem{thmx}{Theorem}
\newtheorem{theorem}{Theorem}[section]
\newtheorem{proposition}[theorem]{Proposition}
\newtheorem{lemma}[theorem]{Lemma}
\newtheorem{corollary}[theorem]{Corollary}
\theoremstyle{definition}
\newtheorem{definition}[theorem]{Definition}
\newtheorem{example}[theorem]{Example}
\newtheorem{problem}[theorem]{Problem}
\DeclareMathOperator{\RR}{\mathbb{R}}
\DeclareMathOperator{\ZZ}{\mathbb{Z}}
\title{Triangulations of Cosmological Polytopes}
\date{\today}
\author{Martina Juhnke-Kubitzke}
\address{Department of Mathematics, University of Osnabr\"{u}ck, Albrechtstra\ss e 28a, 49076 Osnabr\"uck, Germany}
\email{juhnke-kubitzke@uni-osnabrueck.de}
\author{Liam Solus}
\address{Department of Mathematics, KTH Royal Institute of Technology, SE-100 44 Stockholm, Sweden}
\email{solus@kth.se}
\author{Lorenzo Venturello}
\address{Department of Mathematics, Universit\`{a} di Pisa, L.go B. Pontecorvo, 5, 56127 Pisa, Italy }
\email{lorenzo.venturello@unipi.it}
\begin{document}

\begin{abstract}
% The cosmological polytope is a lattice polytope defined by Arkani-Hamed, Benincasa and Postnikov in their study of the wavefunction of the universe associated to certain cosmological models. 
A cosmological polytope is defined for a given Feynman diagram, and its canonical form may be used to compute the contribution of the Feynman diagram to the wavefunction of certain cosmological models. 
Given a subdivision of a polytope, its canonical form is obtained as a sum of the canonical forms of the facets of the subdivision. 
In this paper, we identify such formulas for the canonical form via algebraic techniques. 
It is shown that the toric ideal of every cosmological polytope admits a Gr\"obner basis with a squarefree initial ideal, yielding a regular unimodular triangulation of the polytope. 
In specific instances, including trees and cycles, we recover graphical characterizations of the facets of such triangulations that may be used to compute the desired canonical form. 
For paths and cycles, these characterizations admit simple enumeration. 
Hence, we obtain formulas for the normalized volume of these polytopes, extending previous observations of K\"uhne and Monin. 
\end{abstract}

\maketitle

\section{Introduction}
\label{sec: intro}

Arkani-Hamed, Benincasa and Postnikov \cite{arkani2017cosmological} introduced the cosmological polytope $\mathcal{C}_G$ of an undirected, connected graph $G = (V,E)$, where $V$ is the finite set of \emph{vertices (or nodes)} of $G$ and $E$ is its finite collection of \emph{edges}; i.e., pairs $ij$ for some $i,j\in V$.
When we would like to emphasize that $V$ and $E$ are, respectively, the vertex and edge set of $G$, we may write $V(G)$ and $E(G)$, respectively. 
We will use $ij$ to denote an undirected edge between $i$ and $j$, and $(i,j)$ to denote a directed edge $i\rightarrow j$ when edge directions are needed.

We work in the finite real-Euclidean space $\RR^{|V|+|E|}$ with standard basis vectors $x_i$ and $x_e$ for all $i\in V$, $e\in E$. 
The \emph{cosmological polytope} $\mathcal{C}_G$ of $G$ is 
\[
        \mathcal{C}_G=\text{conv}\{x_i+x_j-x_e,x_i-x_j+x_e,-x_i+x_j+x_e ~:~ e = ij\in E\}.
\]
It is only required that the graph $G$ is connected and undirected with a finite set of vertices and edges. 
For instance, $G$ need not be simple. 
In \cite{kuhne2022faces}, the authors work with a slight generalization of the definition of $\mathcal{C}_G$ that allows for $G$ to be disconnected. 
For the purposes of this paper, however, we will consider only connected $G$.

In the physical context, the graph $G$ can be interpreted as a Feynman diagram, in which case the cosmological polytope provides a geometric model for the computation of the contribution of the Feynman diagram represented by $G$ to the so-called \emph{wavefunction of the universe} \cite{arkani2017cosmological}. 
Recent works study the physics of scattering amplitudes via a generalization of convex polytopes called \emph{positive geometries} \cite{arkani2017positive}. 
This connection arises via a unique differential form of the positive geometry that has only logarithmic singularities along its boundary. 
This form is termed its \emph{canonical form} \cite{arkani2017positive}.
In the case of a cosmological polytope $\mathcal{C}_G$, the canonical form provides a formula for computing the contribution of the Feynman diagram $G$ to certain wavefunctions of interest. 

One way to compute the canonical form $\Omega_P$ of a polytope $P$ is as a sum of the canonical forms of the facets $S_1,\ldots, S_m$ of a subdivision of $P$ \cite{lam2022invitation}; i.e.,
\begin{equation}
    \label{eqn: canonical form}
    \Omega_P = \Omega_{S_1} + \cdots + \Omega_{S_m}. 
\end{equation}
This technique has been applied successfully in several situations \cite{benincasa2022physical,galashin2020parity,herrmann2021positive,kojima2020sign,mohammadi2021triangulations}.  

In the case of cosmological polytopes, it was observed in \cite{arkani2017cosmological} for specific examples of $G$ that special subdivisions of the cosmological polytope correspond to classical physical theories for the computation of the contribution of a Feynman diagram to a wavefunction. % \cite{arkani2017cosmological}. 
This observation suggests that subdivisions of the cosmological polytope may correspond to physical theories for the computation of wavefunctions, and hence motivates a search for nice subdivisions of cosmological polytopes that hold for any graph $G$.  
Such subdivisions have the potential to provide new physical theories for wavefunction computations. 
The investigation of subdivisions that hold for any graph $G$ was left as future work in \cite{arkani2017cosmological}.
In this paper, we provide such subdivisions by way of algebraic techniques.
% This observation motivated the search for different subdivisions of cosmological polytopes that have the potential to provide new physical theories for such computations \cite{arkani2017cosmological}. 

The cosmological polytope is a \emph{lattice polytope}; i.e., the convex hull of a finite collection of points in $\ZZ^{|V|+|E|}$.
From this perspective, it is perhaps most natural to investigate its subdivisions into \emph{unimodular} simplices, i.e., lattice simplices of minimum Euclidean volume. 
Such subdivisions have the advantage that each summand in \eqref{eqn: canonical form} has the relatively simple form of a rational function
\[
\frac{\omega}{f_1\cdots f_r},
\]
where $f_1,\ldots, f_r$ are the facet-defining equations of the simplex and $\omega$ is a regular form on the associated positive geometry \cite{arkani2015positive}. 
Hence, it is desirable to observe not only the existence of unimodular triangulations of $\mathcal{C}_G$ but also to provide a complete description of their facets. 

In this paper, we initiate the study of this problem via algebraic techniques. 
We compute a family of Gr\"obner bases of the toric ideal for the cosmological polytope $\mathcal{C}_G$ for any connected, undirected graph $G$, each of which has a squarefree initial ideal. 
It is known that the initial terms of such a Gr\"obner basis correspond to the minimal non-faces of a regular unimodular triangulation of $\mathcal{C}_G$ \cite{sturmfels1996grobner}. 
Hence, we obtain the following main result:
\begin{thmx} (\Cref{cor: triangulation})
    \label{thm: main}
    The cosmological polytope $\mathcal{C}_G$ of any undirected, connected graph $G$ has a regular unimodular triangulation. 
\end{thmx}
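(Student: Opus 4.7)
The plan is to construct, for every connected graph $G$, a Gr\"obner basis of the toric ideal $I_{\mathcal{C}_G}$ whose initial ideal is squarefree, and then invoke the classical result \cite{sturmfels1996grobner} that the radical of such an initial ideal is the Stanley--Reisner ideal of a regular unimodular triangulation of the underlying polytope.

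First, I would set up the toric ideal explicitly. Introduce one polynomial variable $z_{e,t}$ for each vertex of $\mathcal{C}_G$, indexed by an edge $e=ij\in E$ and a ``type'' $t \in \{ij, i, j\}$ recording which of the three lattice points $x_i+x_j-x_e$, $x_i-x_j+x_e$, $-x_i+x_j+x_e$ is chosen. Then $I_{\mathcal{C}_G}$ is the kernel of the monomial map sending $z_{e,t}$ to the Laurent monomial in auxiliary variables (one per vertex of $G$, one per edge) that encodes the corresponding point. Binomial relations in $I_{\mathcal{C}_G}$ come from integer linear dependencies among these points; the most natural ones are local, attached to pairs of adjacent edges $e=ij$, $f=jk$ sharing a vertex, where we can swap the ``signature'' of the two half-edges meeting at $j$.

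The next step is to choose a term order that makes such local binomials into a Gr\"obner basis with squarefree leading terms. A natural choice is to fix a root (or an arbitrary linear order on $V$ and $E$) and let the term order prefer types that are ``oriented away from the root'', so that a swap at a shared vertex always strictly reduces the order. Having written down a candidate generating set of this form, I would verify the Gr\"obner basis property by an $S$-pair reduction. Because the binomials are indexed by short combinatorial patterns in $G$, the $S$-pair computations decompose into finitely many local cases attached to paths and short cycles, which can be handled either by direct reduction or by induction on the number of edges of $G$. The squarefree condition on leading terms is essentially built into the choice of local generators, provided no variable $z_{e,t}$ appears with multiplicity in a leading monomial, which is easy to ensure from the definition of the swap.

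The main obstacle I expect is the combinatorial bookkeeping in the $S$-pair reductions: one must show that every overlap between two swap binomials can be rewritten, via further swap binomials, into the required normal form. The correct organization is likely to view each leading monomial as a combinatorial assignment (of types to edges) on $G$ and to describe the reduction process as a sequence of elementary moves on these assignments, arguing termination by the chosen term order. Once the Gr\"obner basis property and squarefreeness of the initial ideal are verified, \cite{sturmfels1996grobner} yields the regular unimodular triangulation of $\mathcal{C}_G$ directly, proving Theorem~\ref{thm: main}.
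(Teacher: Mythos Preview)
Your plan has a fatal gap at the very first step: you propose one variable per \emph{vertex} of $\mathcal{C}_G$, i.e.\ the $3|E|$ points $x_i+x_j-x_e$, $x_i-x_j+x_e$, $-x_i+x_j+x_e$. But a squarefree initial ideal of the toric ideal of a point configuration $A$ gives a unimodular triangulation \emph{with vertices in $A$}, and the vertex set of $\mathcal{C}_G$ is simply too sparse for such a triangulation to exist. Already for a single edge $e=ij$, the polytope $\mathcal{C}_G$ is a triangle of normalized volume $4$; with only its three vertices available there is no subdivision at all, let alone a unimodular one. The paper avoids this by working with the full set of lattice points of $\mathcal{C}_G$: in addition to the three vertices per edge (their $t_e$, $y_{ije}$, $y_{jie}$), it introduces the midpoint variables $z_i$, $z_j$, $z_e$ corresponding to the standard basis vectors $x_i$, $x_j$, $x_e$. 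These interior lattice points are essential, and the fundamental binomials such as $y_{ije}y_{jie}-z_e^2$ and $y_{ije}z_j-z_iz_e$ that drive the whole argument live precisely in the interaction between the vertex variables and these new $z$-variables.

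A second, independent gap is your expectation that ``local'' binomials attached to pairs of adjacent edges will form a Gr\"obner basis. In the paper's setup this is false: beyond the six fundamental (degree-two) binomials per edge, the Gr\"obner basis must contain the \emph{zig-zag} and \emph{cyclic} binomials, which are indexed by arbitrary paths and cycles of $G$ and can have unbounded degree. Your sketch of the $S$-pair reduction would have to discover these long binomials, and the organization you suggest (elementary moves on local type assignments) does not produce them. The paper's proof strategy is also different from yours in style: rather than computing $S$-pairs, it shows directly that every irreducible binomial $\mathbf{m}_1-\mathbf{m}_2$ in $I_{\mathcal{C}_G}$ has one of its monomials divisible by a leading term in $B_G$, by translating $\mathbf{m}_1,\mathbf{m}_2$ into a pair of directed subgraphs and chasing positive/negative vertices through them until a zig-zag or cyclic pattern is forced.
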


An analogous result holds for another family of lattice polytopes associated to graphs; namely \emph{symmetric edge polytopes} \cite{HJM,FirstSEP}. Interestingly, these are strongly related to cosmological polytopes, as the symmetric edge polytope of a graph appears as a specific projection of a facet of the cosmological polytope of the same graph.

The identified Gr\"obner bases provide the minimal non-faces of triangulations, from which achieving a facet description is a non-trivial task. One of the contributions of this article is to obtain such a characterization for the cosmological polytope of notable families of graphs.
\begin{thmx}
    For specific choices of a term order we obtain a facet description of a regular unimodular triangulation of the cosmological polytope $\mathcal{C}_G$, when $G$ is:
\begin{itemize}
    \item[-] a path  (\Cref{lem: path tri facets});
    \item[-] a cycle (\Cref{thm: cycle facets});
    \item[-] a tree (\Cref{thm: tree facet characterization}).
\end{itemize}
\end{thmx}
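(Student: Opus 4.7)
The plan is to start from Theorem A, which yields a Gr\"obner basis of the toric ideal of $\mathcal{C}_G$ with squarefree initial ideal. By the standard correspondence, the supports of the leading monomials give the minimal non-faces of a regular unimodular triangulation $\mathcal{T}$, so a subset $F$ of the vertex set of $\mathcal{C}_G$ is a face of $\mathcal{T}$ precisely when it contains no minimal non-face, and a facet of $\mathcal{T}$ is a maximal such $F$. The approach for each family (paths, cycles, trees) is then to choose a term order tailored to the structure of $G$, read off the minimal non-faces, and turn the condition ``contains no minimal non-face and is of maximal cardinality'' into an intrinsic combinatorial description of the facets.

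For trees, I would proceed recursively. Each edge $e = ij$ contributes the three candidate vertices $x_i + x_j - x_e$, $x_i - x_j + x_e$, $-x_i + x_j + x_e$ of $\mathcal{C}_G$, and the minimal non-faces obtained from the Gr\"obner basis are expected to be local, imposing compatibility conditions only between vertices associated to adjacent edges. I would root the tree at a leaf and propagate these compatibility conditions outward, showing that the permissible collections of edge-vertices biject with a combinatorial datum on $G$ (such as an orientation or a two-coloring of its edges, together with a choice at each vertex) of the right cardinality. The case of a path then follows by specializing the tree argument to a rooted line, where the description simplifies to a sequence of choices that can be read left to right.

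For cycles, the single cycle in $G$ closes up the path description and introduces one additional global constraint, arising from the one new binomial that the Gr\"obner basis of $\mathcal{C}_{C_n}$ contains but that of $\mathcal{C}_{P_n}$ does not. I would first restrict the path description to $C_n$ cut at an edge $e$, then describe which of those path-facets extend consistently across $e$. This extra condition should take a parity-like form. Finally I would compare the cardinality of the resulting family of simplices against the normalized volume of $\mathcal{C}_{C_n}$ as computed by K\"uhne and Monin to confirm that nothing is missing.

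The main obstacle in all three cases is closing the loop between ``avoids every minimal non-face'' and ``is a facet of $\mathcal{T}$'': the candidate combinatorial objects must be shown to have size exactly $\dim \mathcal{C}_G + 1$ and to exhaust all facets. The cleanest way to do this is to exhibit, for each candidate, a linearly independent set of vertices of the correct size (so the candidate is a simplex of top dimension) and then match the total count against the normalized volume of $\mathcal{C}_G$, which coincides with the number of facets of any unimodular triangulation. For paths and cycles the enumeration of facets is tractable directly, while for trees one must instead produce a bijection with an explicit combinatorial family and verify that it is compatible with the recursive construction of $G$ from its subtrees.
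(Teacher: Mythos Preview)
Your high-level plan—specialize the Gr\"obner basis of Theorem~A, read off minimal non-faces, and characterize maximal subsets avoiding them—matches the paper's strategy.  However, several concrete assumptions you make would cause the argument to break down.

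First, the triangulation does not use only the three vertices $x_i+x_j-x_e$, $x_i-x_j+x_e$, $-x_i+x_j+x_e$ per edge; it uses all $|V|+4|E|$ lattice points of $\mathcal{C}_G$, including the points $x_k$ (the $z$-variables) and $x_i+x_j-x_e$ (the $t$-variables).  The paper's facet descriptions are phrased in terms of decorated graphs $G_S$ whose symbols record which of these $|V|+4|E|$ points lie in $S$, and the $z$-variables (the ``white nodes'') organize the entire structure of the facets.  Without them you cannot even state the characterization.

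Second, the minimal non-faces are not local.  The zig-zag binomials in $B_G$ are supported on arbitrary paths in $G$, so their leading terms give non-faces spanning arbitrarily many edges.  For a path, these become the ``partially directed paths to the right ending in $\circ$''; for a tree they become the more elaborate ``simple zig-zag obstructions'' of two types.  The paper's work in Sections~3 and~5 consists largely of taming these non-local obstructions: Lemma~3.1 (and its tree analogue Lemma~5.7) shows that in a facet the single edges must separate the white nodes, which forces a very specific structure on each ``$Z_S$-component''.  A purely local propagation argument of the kind you sketch would not see these constraints.

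Third, your verification step fails.  For cycles, the normalized volume of $\mathcal{C}_{C_n}$ was \emph{not} known to K\"uhne and Monin; it is computed for the first time in this paper (Theorem~C), precisely by counting the facets you would be trying to verify.  For general trees, the paper does not compute the volume at all.  So you cannot close the loop by a counting argument; you must instead prove directly, as the paper does, that any $S$ avoiding all obstructions and of cardinality $|V|+|E|$ has the claimed graphical form, and conversely.  The paper does this by a careful case analysis (for paths, Theorem~3.2; for trees, via the notions of \emph{blocking threshold paths} and \emph{partially directed branchings} in Theorem~5.8), not by an external volume check.

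Finally, for cycles there is not ``one new binomial'' but an entire family of cyclic binomials, one for each partition of the edge set of $C_n$; the resulting extra obstruction is that $G_S$ must contain no clockwise partially directed cycle, which in particular forces $Z_S\neq\emptyset$.
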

In the case of paths and cycles these characterizations are of a relatively simple form that allows for enumeration. 
We thereby obtain formulas for the normalized volume of $\mathcal{C}_G$ in these two  cases. While, for paths, we recover the formula identified in \cite{kuhne2022faces}, for the cycle, the normalized volume of $\mathcal{C}_G$ was previously unknown. Indeed, our methods enable us to show the following simple formula:
\begin{thmx} (\Cref{thm : vol cycle})
    The cosmological polytope $\mathcal{C}_{C_n}$ of the $n$-cycle $C_n$ has normalized volume
    \[
        \textnormal{Vol}(\mathcal{C}_{C_n})=4^n-2^n.
    \]
\end{thmx}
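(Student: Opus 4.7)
The plan is to exploit unimodularity of the triangulation: Theorem~A applied to $G = C_n$ yields a regular unimodular triangulation of $\mathcal{C}_{C_n}$, and since every maximal simplex in such a triangulation has normalized volume $1$, the normalized volume of $\mathcal{C}_{C_n}$ equals the total number of facets of the triangulation. The volume question thus becomes a purely combinatorial enumeration problem.

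First I would unpack the facet characterization from \Cref{thm: cycle facets}, which identifies the maximal simplices of the triangulation with a combinatorial family $\mathcal{F}_n$ of ``labelings'' of the edges of $C_n$: for each edge $e = ij$, one records how its three associated lattice vertices $\{x_i + x_j - x_e,\, x_i - x_j + x_e,\, -x_i + x_j + x_e\}$ enter the simplex, subject to a local compatibility between consecutive edges and a global closure condition imposed by going once around the cycle. The analogous characterization for paths (\Cref{lem: path tri facets}) lacks any global closure condition, so the enumeration there decouples into independent local choices at each edge.

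Next I would enumerate $\mathcal{F}_n$ via a transfer-matrix computation: encode the local state at each edge of $C_n$, let $T$ be the matrix recording which transitions between consecutive edges are allowed, and then $|\mathcal{F}_n| = \operatorname{tr}(T^n)$. The shape of the answer, $4^n - 2^n$, is consistent with $T$ acting on its relevant invariant subspace with spectrum $\{4, 2\}$ and with the contribution of the smaller eigenvalue entering with a sign (reflecting a parity twist from the cyclic gluing), so it suffices to diagonalize $T$ and read off its eigenvalues. An equivalent and more transparent route is direct inclusion--exclusion: count all ``path-like'' configurations on the $n$ edges of $C_n$ (of which I expect there to be $4^n$, one factor of $4$ per edge), and then subtract the $2^n$ configurations that fail the cyclic closure condition.

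The main obstacle will be to pin down the closure condition precisely and to identify the $2^n$ configurations that violate it. Concretely, one must isolate those edge-labelings that would produce a facet of the triangulation of the path $P_n$ but fail to remain simplices in $\mathcal{C}_{C_n}$ because the closing edge introduces a new linear dependence among the chosen vertices. Once this obstruction is identified (I expect it to take the form of a parity or orientation condition propagated around the cycle), exhibiting an explicit bijection between the forbidden configurations and $2$-colorings of a natural object of size $n$ should yield exactly $2^n$ excluded labelings, completing the proof that $|\mathcal{F}_n| = 4^n - 2^n$.
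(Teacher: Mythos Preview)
Your high-level strategy is correct: by unimodularity of the triangulation the normalized volume equals the number of facets, and \Cref{thm: cycle facets} describes those facets combinatorially, so the problem is indeed an enumeration.

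However, your execution plan misses the clean structure that makes the count immediate, and your interpretation of the subtracted $2^n$ is off. The paper does not use a transfer matrix or inclusion--exclusion on edge labelings; it partitions the facets by the set $Z_S \subseteq [n]$ of white vertices. \Cref{thm: cycle facets} says first that $Z_S \neq \emptyset$, and second that each arc of the cycle between two consecutive white vertices is exactly one of the ``middle interval'' types from \Cref{lem: path tri facets}~(2). From the proof of \Cref{cor: volume path}, an arc of length $\ell$ admits exactly $2^\ell$ such configurations, so for any fixed nonempty $Z_S$ the number of facets with that white-vertex set is $\prod_j 2^{\ell_j} = 2^n$, independent of $Z_S$. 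Summing over the $2^n - 1$ nonempty subsets gives $(2^n-1)\cdot 2^n = 4^n - 2^n$ directly.

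Your heuristic ``one factor of $4$ per edge'' is not how the enumeration actually decouples, and the $-2^n$ does not arise from path-type facets that fail to glue cyclically: it is precisely the would-be contribution of $Z_S = \emptyset$, excluded because an empty white-vertex set forces every edge to be double and hence forces $G_S$ to contain the leading term of a cyclic binomial. A transfer-matrix argument could be pushed through, but the state would have to record both the edge decoration and whether the adjacent vertex is white --- a state space you never specify --- and even then it would be strictly more work than the two-line partition-by-$Z_S$ argument above.
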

While the normalized volume of these polytopes provides us with information on the number of summands in the formula \eqref{eqn: canonical form} for computing $\Omega_{\mathcal{C}_G}$, the explicit description of the facets that we obtain for trees and cycles given in Theorems~\ref{lem: path tri facets}, \ref{thm: cycle facets}, and~\ref{thm: tree facet characterization} allows for the exact computation of this canonical form.  
Theorem~\ref{thm: main} suggests that such characterizations should be feasible for more general graphs via further analysis of the Gr\"obner bases identified in this paper.

\section{Gr\"obner Bases for the toric ideal of $\mathcal{C}_G$}
\label{sec: grobner basis}
In this section, we describe a family of Gr\"{o}bner bases for the toric ideal of a cosmological polytope with the property that the corresponding initial ideals are squarefree. 
% As a result, we recover unimodular triangulations of these polytopes specified by their collections of minimal non-faces. 
% In the coming sections, we then study these triangulations explicitly for families of graphs where we can recover characterizations of their facets to be used in the computation of the canonical forms of the polytope. 
% % We give characterizations of the simplices of these triangulations that can be used to compute the canonical form of the associated cosmological polytope in a novel way. 
% 
We start with some definitions. 
For any undirected graph $G$ with vertex set $V$ and edge set $E$, we define a polynomial ring in $|V|+4|E|$ variables, each corresponding to a lattice point of $\mathcal{C}_G$. More precisely, we introduce three families of variables:
\begin{itemize}
    \item[-] A variable $z_k$, for every $k\in V\cup E$. We refer to these as \emph{$z$-variables}.
    \item[-] Variables $y_{ije}$ and $y_{jie}$, for every edge $e=ij\in E$. We refer to these as \emph{$y$-variables}. 
    \item[-] A variable $t_e$ for every edge $e\in E$. We refer to these as \emph{$t$-variables}. 
\end{itemize}
Let $R_G$ be the polynomial ring in these $|V|+4|E|$ many variables, with coefficients in a field $K$, and consider the surjective homomorphism of $K$-algebras defined by
%\begin{align*}
%    \varphi: R_G&\to K[\mathbf{x}^{p} ~ : ~ p\in \mathcal{C}_G\cap \mathbb{Z}^{V\cup E}]\\
%    z_k &\mapsto x_k\\
%    y_{ij(e)} &\mapsto x_ix_jx_{e}^{-1}\\
%    y_{i(j)e} &\mapsto x_ix_j^{-1}x_{e}\\
%    y_{(i)je} &\mapsto x_i^{-1}x_jx_{e}
%\end{align*}
\begin{align*}
    \varphi_G: R_G&\to K[\mathbf{w}^{p} ~ : ~ p\in \mathcal{C}_G\cap \mathbb{Z}^{V\cup E}]\\
    z_k &\mapsto w_k\\
    y_{ije} &\mapsto w_iw_j^{-1}x_{e}\\
    y_{jie} &\mapsto w_i^{-1}w_j w_{e} \\ 
    t_{e} &\mapsto w_iw_jw_{e}^{-1}.
\end{align*}
The ideal $I_{\mathcal{C}_G}\coloneqq \ker(\varphi_G)$ is the \emph{toric ideal} of $\mathcal{C}_G$. Observe that variables in $R_G$ correspond to lattice points of $\mathcal{C}_G$.
When the graph $G$ is understood from the context, we may simply write $\varphi$ for $\varphi_G$.
We now define some distinguished binomials in $I_{\mathcal{C}_G}$, which will be elements of a Gr\"{o}bner basis for this ideal.

% \begin{definition}
%     We say that two directed subgraphs of $G$ are \emph{disjoint} if the sets of undirected edges of $G$ on which the subgraphs are supported are disjoint. We define two types of pairs of directed subgraphs of $G$.
%     \begin{itemize}
%         \item[-] A \emph{zig-zag-pair} is a pair $(P_1,P_2)$ of directed disjoint subgraphs of $G$ such that: $P_1$ is the union of disjoint paths from $i_k$ to $j_k$, for $k=1,\dots,\ell$, and $P_2$ is the union of disjoint paths from $i_{k-1}$ to $j_{k}$, for $k=1,\dots,\ell$. We denote by $s(P_1)=i_{\ell}$ and $s(P_2)=i_0$ the two \emph{sources} \textcolor{red}{L: this terminology means something a little different in standard graph theory language.  I would suggest "endpoints" maybe} \textcolor{blue}{M: maybe local sources, relative sources} \textcolor{red}{L: \emph{terminal} nodes?}, i.e., the vertices of degree $1$ in the union of $P_1$ and $P_2$. 
%         \item[-] A \emph{cyclic-pair} is a pair $(P_1,P_2)$ of directed disjoint subgraphs of $G$ such that: $P_1$ is the union of disjoint paths from $i_k$ to $j_k$, for $k=1,\dots,\ell$, and $P_2$ is the union of disjoint paths from $i_k$ to $j_{k+1}$, for $k=1,\dots,\ell-1$, and from $i_{\ell}$ to $j_1$.
%         \textcolor{blue}{M: Maybe define it via zig-zag-pairs plus the additional edge.}
%     \end{itemize}
% \end{definition}

\begin{definition}
    We define two types of pairs of directed subgraphs of $G$.
    \begin{itemize}
        \item[(i)] Let $P$ be a path in $G$, with $E(P)=\{i_1i_2,i_2i_3,\dots,i_{k-1}i_k\}$. For any partition $(P_1,P_2)$ of $E(P)$ into two nonempty blocks we consider $E_1 = \{i_{j}\to i_{j+1} ~ : ~ i_{j}i_{j+1}\in E(P_1)\}$, and $E_2 = \{i_{j+1}\to i_{j} ~ : ~ i_{j}i_{j+1}\in E(P_2)\}$. The pair $(E_1,E_2)$ is called a \emph{zig-zag pair} of $G$. 
    
        Moreover, we define the  \emph{terminal vertices} of $(E_1,E_2)$ to be $v_1=i_k$ and $v_2=i_1$.
        \item[(ii)] Let $C$ be a cycle in $G$, with $E(C)=\{i_1i_2,i_2i_3,\dots,i_{k-1}i_k, i_ki_1\}$. For any partition $(C_1,C_2)$ of $E(C)$ into two blocks (with one possibly empty) we consider $E_1 = \{i_{j}\to i_{j+1} ~ : ~ i_{j}i_{j+1}\in E(C_1)\}$, and $E_2 = \{i_{j+1}\to i_{j} ~ : ~ i_{j}i_{j+1}\in E(C_2)\}$. The pair $(E_1,E_2)$ is called a \emph{cyclic pair} of $G$.
    \end{itemize}
\end{definition}

\begin{figure}[t]
    \centering
    \includegraphics[scale=0.8]{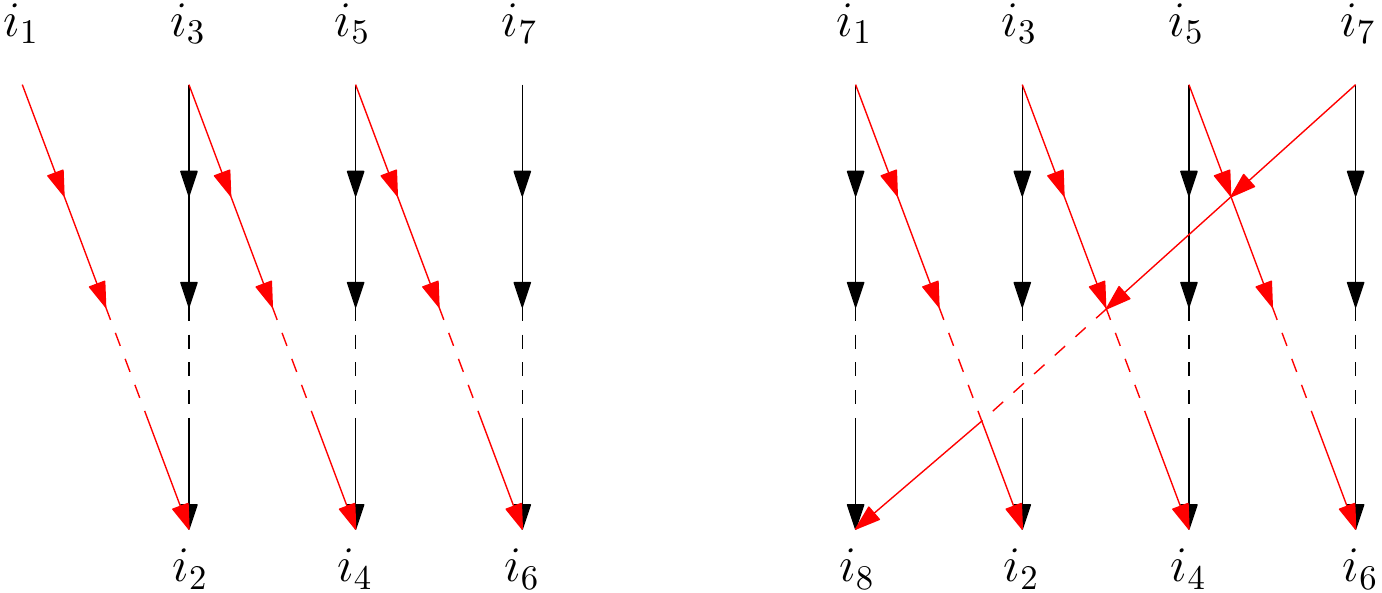}
    \caption{A zig-zag pair and a cyclic pair $(E_1,E_2)$. The edges in $E_1$ are drawn in red.}
    \label{fig:pairs}
\end{figure}
% \textcolor{blue}{M: The labels in the figure do no longer fit to the definition. We changed the definition but never adapted the figure ...}
% \begin{definition}
%     For every zig-zag-pair $(P_1,P_2)$ we define a binomial
%    % \[
%     %    b_{P_1,P_2} =
%      %  \displaystyle\prod_{\overset{\to}{e}=(i,j)\in E(P_1)}y_{i(j)e}z_{s(P_2)}\prod_{\overset{\to}{e}=(i,j)\in E(P_2)} z_e - \prod_{\overset{\to}{e}=(i,j)\in E(P_2)}y_{i(j)e}z_{s(P_1)}\prod_{\overset{\to}{e}=(i,j)\in E(P_1)} z_e
%    % \]
%    \[
%         b_{P_1,P_2} =
%        \displaystyle z_{s(P_2)}\prod_{\overset{\to}{e}=(i,j)\in E(P_1)}y_{ije}\prod_{\overset{\to}{e}=(i,j)\in E(P_2)} z_e - z_{s(P_1)}\prod_{\overset{\to}{e}=(i,j)\in E(P_2)}y_{ije}\prod_{\overset{\to}{e}=(i,j)\in E(P_1)} z_e.
%     \]   
%     For every cyclic-pair $(P_1,P_2)$ we define the binomial
%     %\[            
%     %b_{P_1,P_2} = \displaystyle\prod_{\overset{\to}{e}=(i,j)\in E(P_1)}y_{i(j)e}\prod_{\overset{\to}{e}=(i,j)\in E(P_2)} z_e - \prod_{\overset{\to}{e}=(i,j)\in E(P_2)}y_{i(j)e}\prod_{\overset{\to}{e}=(i,j)\in E(P_1)} z_e
%     %\]
%     \[            
%     b_{P_1,P_2} = \displaystyle\prod_{\overset{\to}{e}=(i,j)\in E(P_1)}y_{ije}\prod_{\overset{\to}{e}=(i,j)\in E(P_2)} z_e - \prod_{\overset{\to}{e}=(i,j)\in E(P_2)}y_{ije}\prod_{\overset{\to}{e}=(i,j)\in E(P_1)} z_e.
%     \]
%     For every directed cycle $C$ of $G$ we define the binomial
%     \[
%         b_C = \displaystyle\prod_{\overset{\to}{e}=(i,j)\in E(C)}y_{ije}- \prod_{\overset{\to}{e}=(i,j)\in E(C)} z_e.\\
%     \]
% \end{definition}

\begin{definition}
    For every zig-zag pair $(E_1,E_2)$ we define the \emph{zig-zag binomial}
   % \[
    %    b_{P_1,P_2} =
     %  \displaystyle\prod_{\overset{\to}{e}=(i,j)\in E(P_1)}y_{i(j)e}z_{s(P_2)}\prod_{\overset{\to}{e}=(i,j)\in E(P_2)} z_e - \prod_{\overset{\to}{e}=(i,j)\in E(P_2)}y_{i(j)e}z_{s(P_1)}\prod_{\overset{\to}{e}=(i,j)\in E(P_1)} z_e
   % \]
   \[
        b_{E_1,E_2} =
       \displaystyle z_{v_1}\prod_{e=i\to j\in E_1}y_{ije}\prod_{e=i \to j\in E_2} z_e - z_{v_2}\prod_{e=i \to j\in E_2}y_{ije}\prod_{e=i \to j\in E_1} z_e.
    \]   
    For every cyclic pair $(E_1,E_2)$ we define the \emph{cyclic binomial}
    %\[            
    %b_{P_1,P_2} = \displaystyle\prod_{\overset{\to}{e}=(i,j)\in E(P_1)}y_{i(j)e}\prod_{\overset{\to}{e}=(i,j)\in E(P_2)} z_e - \prod_{\overset{\to}{e}=(i,j)\in E(P_2)}y_{i(j)e}\prod_{\overset{\to}{e}=(i,j)\in E(P_1)} z_e
    %\]
    \[            
    b_{E_1,E_2} = \displaystyle\prod_{e=i\to j\in E_1}y_{ije}\prod_{e=i\to j\in E_2} z_e - \prod_{e=i \to j\in E_2}y_{ije}\prod_{e= i\to j\in E_1} z_e.
    \]
    In the case that either $E_1 = \emptyset$ or $E_2 = \emptyset$, we call the resulting cyclic binomial a \emph{cycle binomial}. In particular, cycle binomials consist of one monomial containing only $y$-variables and one containing only $z$-variables.
    % For every directed cycle $C$ of $G$ we define the binomial
    % \[
    %     b_C = \displaystyle\prod_{\overset{\to}{e}=(i,j)\in E(C)}y_{ije}- \prod_{\overset{\to}{e}=(i,j)\in E(C)} z_e.\\
    % \]
    % \textcolor{red}{L: Update cyclic pairs to include cycle binomials.}
    % \textcolor{red}{L: $b_C$ is a special case of cyclic-pair binomials if we allow the partitions to have empty blocks, right? Do we allow the partitions to have empty blocks for zig-zag pairs?}
    % \textcolor{blue}{M: I think you are right with respect to $b_C$. For zig-zag pairs, we want the partitions to have the same number of non-empty blocks. Even though we could allow them to differ by one, this would not give us needed generators for the Gröbner basis since we can find a fundamental binomial whose leading term divides.}
\end{definition}

\begin{definition}\label{def: basis}
	We define the following collection of binomials in $R_G$.
	%\begin{align*}
	%	B_G=\{& y_{i(j)e}y_{(i)je}-z_e^2,~ y_{i(j)e}y_{ij(e)}-z_i^2,~  y_{(i)je}y_{ij(e)}-z_j^2,\\
	%	& y_{i(j)e}z_j - z_iz_e,~ y_{(i)je}z_i - z_jz_e,~ y_{ij(e)}z_e - z_iz_j\} \cup\\
	%	& \{y_{i_0(i_1)e_1}\cdots y_{i_{k-1}(j)e_k}z_{i'_0}z_{e'_1}\cdots z_{e'_{\ell}} - y_{i'_0(i'_1)e'_1}\cdots y_{i'_{\ell-1}(j)e'_\ell}z_{i_0}z_{e_1}\cdots z_{e_{k}} ~:~ \\
	%	& \text{for every two directed paths $i_0,\dots,i_k$, $i'_0,\dots,i'_{\ell}$ in $G$ with common sink $j=i_k=i_{\ell}$}
	%	\}
	%\end{align*}
	\begin{align*}
		B_G=\{& \underline{y_{ije}y_{jie}}-z_e^2,~ \underline{y_{ije}t_{e}}-z_i^2,~  \underline{y_{jie}t_{e}}-z_j^2,\\
		& \underline{y_{ije}z_j} - z_iz_e,~ \underline{y_{jie}z_i} - z_jz_e,~ \underline{t_{e}z_e} - z_iz_j ~:~ e=ij\in E(G)\} \cup\\
		% & \{b_C ~ : ~ C \text{ directed cycle of $G$}\} \cup   \\
		& \{b_{E_1,E_2} ~ : ~ (E_1,E_2) \text{ is a zig-zag pair or a cyclic pair of $G$}\}.
	\end{align*}
	We call the $6|E|$ many binomials contained in the first set the \emph{fundamental binomials}.
\end{definition}
\begin{definition}\label{def : good}
	A term order on $R_G$ is called a \emph{good} term order if the leading terms of the degree two binomials in $B_G$ are the elements underlined in \Cref{def: basis}, and the leading terms of cycle binomials are the monomials containing only $y$-variables.
\end{definition}
We observe that good term orders exist. 
For instance we can consider any lexicographic term order for which $y$-variables and $t$-variables are larger than any $z$-variable.
We will now show that, for any undirected, connected graph $G$, the set $B_G$ is a Gr\"obner basis for $I_{\mathcal{C}_G}$ with respect to any good term order. 
To do so, we require a few lemmas. 

\begin{lemma}\label{lem: yij(e)}
    Let $b = \mathbf{m}_1 - \mathbf{m}_2$ be a binomial in $I_{\mathcal{C}_G}$, and assume that no variable divides both $\mathbf{m}_1$ and $\mathbf{m}_2$. If $t_{e}|\mathbf{m}_1$, for some edge $e=ij$ of $G$, then $\mathbf{m}_1$ is divisible by the leading term of a fundamental binomial in $B_G$ with respect to any good term order.
\end{lemma}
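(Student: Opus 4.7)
The plan is to track the exponent of the single variable $w_e$ in the Laurent monomial $\varphi_G(\mathbf{m})$ for a monomial $\mathbf{m}\in R_G$, and to exploit the fact that $t_e$ is the \emph{unique} variable of $R_G$ whose image under $\varphi_G$ has a negative $w_e$-exponent.

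First I would tabulate, for each variable of $R_G$, the exponent of $w_e$ in its image under $\varphi_G$. The variables $z_e$, $y_{ije}$, and $y_{jie}$ each contribute $+1$; the variable $t_e$ contributes $-1$; and every other variable of $R_G$ (namely $z_k$ for $k\in (V\cup E)\setminus\{e\}$, and all $y$- and $t$-variables associated to edges $e'\ne e$) contributes $0$. Writing $\alpha_v(\mathbf{m})$ for the exponent of the variable $v$ in $\mathbf{m}$, this yields
\[
d_e(\mathbf{m}) \;=\; \alpha_{z_e}(\mathbf{m}) + \alpha_{y_{ije}}(\mathbf{m}) + \alpha_{y_{jie}}(\mathbf{m}) - \alpha_{t_e}(\mathbf{m})
\]
for the exponent of $w_e$ in $\varphi_G(\mathbf{m})$.

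Next, since $b=\mathbf{m}_1-\mathbf{m}_2\in I_{\mathcal{C}_G}=\ker\varphi_G$, we have $\varphi_G(\mathbf{m}_1)=\varphi_G(\mathbf{m}_2)$, so $d_e(\mathbf{m}_1)=d_e(\mathbf{m}_2)$. Suppose, for contradiction, that none of $z_e$, $y_{ije}$, $y_{jie}$ divides $\mathbf{m}_1$. Since $t_e\mid\mathbf{m}_1$, the formula above yields $d_e(\mathbf{m}_1)=-\alpha_{t_e}(\mathbf{m}_1)\le -1$. Hence $d_e(\mathbf{m}_2)\le -1$ as well, which forces $\alpha_{t_e}(\mathbf{m}_2)\ge 1$, i.e.\ $t_e\mid\mathbf{m}_2$. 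This contradicts the hypothesis that $\mathbf{m}_1$ and $\mathbf{m}_2$ share no common variable.

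Therefore $\mathbf{m}_1$ must be divisible by at least one of $z_e$, $y_{ije}$, $y_{jie}$, and combined with $t_e\mid\mathbf{m}_1$ this means $\mathbf{m}_1$ is divisible by one of $t_ez_e$, $y_{ije}t_e$, or $y_{jie}t_e$, each of which is the underlined leading term of a fundamental binomial in $B_G$ under any good term order. The only real obstacle is a notational one: to identify in advance that the $w_e$-coordinate is the distinguished direction whose sign detects $t_e$; once that is observed, the argument reduces to a one-line pigeonhole on a single integer exponent.
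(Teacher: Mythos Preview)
Your proof is correct and follows essentially the same approach as the paper: both arguments track the exponent of $w_e$ in $\varphi_G(\mathbf{m}_1)=\varphi_G(\mathbf{m}_2)$ and exploit that $t_e$ is the unique variable of $R_G$ whose image carries a negative power of $w_e$, concluding that one of $z_e$, $y_{ije}$, $y_{jie}$ must divide $\mathbf{m}_1$. The only cosmetic difference is that the paper phrases this as a dichotomy (either $w_e$ has negative exponent in $\varphi_G(\mathbf{m}_2)$, or positive exponent in $\varphi_G(\mathbf{m}_1/t_e)$), whereas you run it as a single contradiction; the underlying idea is identical.
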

\begin{proof}
     Assume $t_{e}|\mathbf{m}_1$ and recall that $\varphi(t_{e})=w_iw_jw_e^{-1}$. Since $b\in I_{\mathcal{C}_G}$, we have that $\varphi(\mathbf{m}_1)=\varphi(\mathbf{m}_2)$. In particular, the variable $w_{e}$ either appears in the Laurent monomial $\varphi(\mathbf{m}_2)$ with a negative exponent, or it appears in $\varphi(\mathbf{m}_1/t_{e})$ with positive exponent. The first case contradicts the fact that $b$ is irreducible: indeed, since $t_{e}$ is the only variable for which the variable $w_e$ appears in $\varphi(t_{e})$ with a negative exponent, this would imply that $t_{e}|\mathbf{m}_2$.\\
     In the second case we have that one of the variables $v$ for which $w_e$ appears in $\varphi(v)$ with a positive exponent must divide $\mathbf{m}_1$. These are either $z_e$, $y_{jie}$ or $y_{ije}$. 
     This concludes the proof, since the monomials $t_{e}z_e$, $y_{jie}t_{e}$ and $y_{ije}t_{e}$ are leading terms of some binomial in $B_G$ with respect to the chosen term order. 
\end{proof}

We now associate to any binomial $\mathbf{m}_1 - \mathbf{m}_2$ a pair of directed subgraphs $(\overrightarrow{G_1},\overrightarrow{G_2})$ of $G$ in the following way:
For any variable $y_{ije}$ which divides $\mathbf{m}_1$ (respectively $\mathbf{m}_2$) the graph $\overrightarrow{G_1}$ (respectively $\overrightarrow{G_2}$) contains the vertices $i$ and $j$ and a number of directed edges from $i$ to $j$ equal to the degree of $y_{ije}$ in $\mathbf{m}_1$ (respectively $\mathbf{m}_2$).
\begin{definition}
	For a directed graph $\overrightarrow{G}$ and a vertex $i\in V(\overrightarrow{G})$ we define $\deg_{\overrightarrow{G}}(i)=\text{outdeg}_{\overrightarrow{G}}(i)-\text{indeg}_{\overrightarrow{G}}(i)$, where $\text{outdeg}_{\overrightarrow{G}}(i) = |\{j\in V(\overrightarrow{G}) : (i,j)\in E(\overrightarrow{G})\}|$ and $\text{indeg}_{\overrightarrow{G}}(i) = |\{j\in V(\overrightarrow{G}) : (j,i)\in E(\overrightarrow{G})\}|$. If $\deg_{\overrightarrow{G}}(i)>0$, we call $i$ is  \emph{positive} vertex of  $\overrightarrow{G}$.  If $\deg_{\overrightarrow{G}}(i)<0$, we call $i$ a \emph{negative} vertex of  $\overrightarrow{G}$.
\end{definition}

\begin{lemma}\label{lem: technical}
    Let $b = \mathbf{m}_1 - \mathbf{m}_2$ be an irreducible binomial in $I_{\mathcal{C}_G}$, and let $(\overrightarrow{G_1},\overrightarrow{G_2})$ be the associated pair of directed graphs. Assume that no leading term of a fundamental binomial in $B_G$ with respect to a good term order divides $\mathbf{m}_1$ or $\mathbf{m}_2$.
    Then:
    \begin{itemize}
        \item[(1)] If $\deg_{\overrightarrow{G_1}}(i)<0$, then $i\in V(\overrightarrow{G_1})\cap V(\overrightarrow{G_2})$.  Moreover, if $i\in V(\overrightarrow{G_1})\cap V(\overrightarrow{G_2})$, then $\deg_{\overrightarrow{G_1}}(i)= \deg_{\overrightarrow{G_2}}(i)$. 
        \item[(2)] If $i\in V(\overrightarrow{G_1})\setminus V(\overrightarrow{G_2})$ and $\deg_{\overrightarrow{G_1}}(i)>0$ ($i\in V(\overrightarrow{G_2})\setminus V(\overrightarrow{G_1})$ and $\deg_{\overrightarrow{G_2}}(i)>0 $, respectively), then $z_i| \mathbf{m}_2$ ($z_i| \mathbf{m}_1$, respectively).
       \item[(3)]  If $e\in E(\overrightarrow{G_1})\setminus E(\overrightarrow{G_2})$ ($e\in E(\overrightarrow{G_2})\setminus E(\overrightarrow{G_1})$ respectively), then $z_e|\mathbf{m}_2$ ($z_e|\mathbf{m}_1$ respectively).    \end{itemize}
\end{lemma}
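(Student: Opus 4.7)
The plan is to extract combinatorial constraints from the identity $\varphi(\mathbf{m}_1) = \varphi(\mathbf{m}_2)$ by comparing exponents of the underlying variables $w_k$ for each $k \in V \cup E$. First I would invoke \Cref{lem: yij(e)} on the standing hypothesis to conclude that no $t$-variable divides $\mathbf{m}_1$ or $\mathbf{m}_2$, so all later bookkeeping may be carried out using only $z$- and $y$-variables. Writing $a_k, a'_k$ for the exponents of $z_k$ in $\mathbf{m}_1$ and $\mathbf{m}_2$ respectively, and analogously $a_e, a'_e$ for edge $z$-variables, comparing the exponent of $w_k$ at a vertex $k$ gives the vertex balance
\[
    a_k + \deg_{\overrightarrow{G_1}}(k) = a'_k + \deg_{\overrightarrow{G_2}}(k),
\]
while comparing the exponent of $w_e$ at an edge $e$ yields the edge balance $a_e + m_1(e) = a'_e + m_2(e)$, where $m_\ell(e)$ denotes the multiplicity of $y$-variables for $e$ dividing $\mathbf{m}_\ell$. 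Irreducibility of $b$ further supplies $\min(a_k, a'_k) = \min(a_e, a'_e) = 0$ for every $k$ and $e$. Alongside these, I record the structural fact that $z_k|\mathbf{m}_\ell$ forbids any $y_{jke}$ from dividing $\mathbf{m}_\ell$, since the product would reproduce the fundamental leading term $y_{jke}z_k$; in particular, no in-edge at $k$ can occur in $\overrightarrow{G_\ell}$ when $z_k|\mathbf{m}_\ell$.

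From these tools, I would deduce parts (3) and (2) at once. For (3), if $e \in E(\overrightarrow{G_1}) \setminus E(\overrightarrow{G_2})$ then $m_1(e) \geq 1$ and $m_2(e) = 0$, so $a'_e = a_e + m_1(e) \geq 1$ by the edge balance, forcing $z_e|\mathbf{m}_2$. For (2), if $i \in V(\overrightarrow{G_1}) \setminus V(\overrightarrow{G_2})$ has $\deg_{\overrightarrow{G_1}}(i) > 0$, then $\deg_{\overrightarrow{G_2}}(i) = 0$, and the vertex balance gives $a'_i \geq 1$, so $z_i|\mathbf{m}_2$. The symmetric statements are handled identically. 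The first assertion of part (1) follows similarly by contradiction: $\deg_{\overrightarrow{G_1}}(i) < 0$ places $i$ automatically in $V(\overrightarrow{G_1})$, and the assumption $i \notin V(\overrightarrow{G_2})$ would force $z_i|\mathbf{m}_1$ via the vertex balance, incompatible with the presence of an in-edge at $i$ in $\overrightarrow{G_1}$ required by $\deg_{\overrightarrow{G_1}}(i) < 0$.

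The second assertion of part (1) is the most delicate, and this is where I expect the main obstacle to lie. I would argue by contradiction, assuming $i \in V(\overrightarrow{G_1}) \cap V(\overrightarrow{G_2})$ with $\deg_{\overrightarrow{G_1}}(i) < \deg_{\overrightarrow{G_2}}(i)$ (the reverse case being symmetric). The vertex balance then forces $z_i|\mathbf{m}_1$, whence no in-edge at $i$ appears in $\overrightarrow{G_1}$ and $\deg_{\overrightarrow{G_1}}(i) = \operatorname{outdeg}_{\overrightarrow{G_1}}(i) \geq 1$. For each of the at least two out-edges $i \to j$ in $\overrightarrow{G_2}$, irreducibility combined with the absence of in-edges at $i$ in $\overrightarrow{G_1}$ shows that the underlying edge $e = ij$ lies in $E(\overrightarrow{G_2}) \setminus E(\overrightarrow{G_1})$, so by (3) one obtains $z_e|\mathbf{m}_1$. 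The task is then to leverage this accumulation of $z$-factors in $\mathbf{m}_1$, together with the $y$-variables encoding the out-edges at $i$ in $\overrightarrow{G_1}$, to force a fundamental leading term to appear after all, contradicting the hypothesis. I anticipate that closing this step will require a careful global count over the neighbors of $i$, or an iterative reduction that transports the asymmetry to a neighboring vertex and recurses, until a concrete leading-term violation emerges.
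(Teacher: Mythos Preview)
Your framework (eliminating $t$-variables via \Cref{lem: yij(e)}, then comparing $w_k$- and $w_e$-exponents to obtain the vertex and edge balance equations, together with $\min(a_k,a'_k)=\min(a_e,a'_e)=0$ from irreducibility) is exactly what the paper does, and your arguments for (2), (3), and the first assertion of (1) are correct and coincide with the paper's.

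The divergence is in the second assertion of (1). You set up a contradiction assuming unequal degrees, deduce $z_i\mid\mathbf{m}_1$, and then propose to chase the asymmetry through neighboring vertices via some iterative reduction that you do not carry out. The paper takes a much shorter route: instead of assuming the degrees differ, it argues directly that $z_i$ divides neither $\mathbf{m}_1$ nor $\mathbf{m}_2$. The point is that $i\in V(\overrightarrow{G_\ell})$ forces some $y$-variable for an edge at $i$ to divide $\mathbf{m}_\ell$, and pairing such a $y$-variable with $z_i$ reproduces a fundamental leading term, contrary to hypothesis; hence $a_i=a'_i=0$, and your own vertex balance equation then reads $\deg_{\overrightarrow{G_1}}(i)=\deg_{\overrightarrow{G_2}}(i)$ on the nose. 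No global count or recursion is needed.

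One caveat you should be aware of when writing this up: the fundamental leading terms of the shape ``$y$-variable times $z_i$'' are precisely $y_{jie}z_i$, i.e.\ an \emph{in}-edge at $i$ together with $z_i$; the product of $z_i$ with an out-edge $y_{ije}$ is not a fundamental leading term. The paper's sentence at this step is terse (it writes $y_{ije}z_i$), so when you implement the argument you should make the in-edge/out-edge distinction explicit. In the out-edge-only case your structural observation (that $z_i\mid\mathbf{m}_\ell$ forbids in-edges at $i$ in $\overrightarrow{G_\ell}$) is the right starting point, and any residual case analysis is local to $i$ rather than the propagating reduction you feared.
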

\begin{proof}
     (1) If $\deg_{\overrightarrow{G_1}}(i)<0$, then the degree of $w_i$ in $\varphi(\mathbf{m}_1)$ is negative. Since $b\in I_{\mathcal{C}_G}$, the degree of $w_i$ in $\varphi(\mathbf{m}_2)$ is also negative. As the only variables $v$ such that $w_i$ has negative exponent in $\varphi(v)$ are of the form $y_{jie}$ for some vertex $j$ and edge $e$, the claim follows. Since $i\in V(\overrightarrow{G_1})$ there is at least one edge incident to $i$ in $\overrightarrow{G_1}$. We have then that $\mathbf{m}_1$ is divisible by a variable $y_{ije}$ or $y_{jie}$, for some vertex $j$ and edge $e$. In particular, we conclude that $z_i$ does not divide $\mathbf{m}_1$ as we assumed that $y_{ije}z_i$ does not divide $\mathbf{m}_1$. By symmetry $z_i$ does not divide $\mathbf{m}_2$. It follows that $\deg_{\overrightarrow{G_1}}(i)$ equals the degree of the variable $w_i$ in $\varphi(\mathbf{m}_1)$ and that $\deg_{\overrightarrow{G_2}}(i)$ equals the degree of the variable $w_i$ in $\varphi(\mathbf{m}_2)$. Since $b\in I_{\mathcal{C}_G}$, we conclude that $\deg_{\overrightarrow{G_1}}(i)=\deg_{\overrightarrow{G_2}}(i)$. 
     
    % The variables $v\in R_G$ which contribute to a positive $i$ coordinate in $\varphi(v)$ are either $y_{ije}$, $t_e$ or $z_i$, where $e\in E$ is an edge incident to $i$. Since we assumed that $\mathbf{m}_1$ and $\mathbf{m}_2$ are not divisible by the leading term of a degree $2$ binomial in $B_G$, by \Cref{lem: yij(e)} $t_e$ does not divide neither $\mathbf{m}_1$ nor $\mathbf{m}_2$. In particular, the degree of the variable $x_i$ in $\varphi(\mathbf{m}_2)$ equals $\deg_{\overrightarrow{G_2}}(i)+d$, where $d$ is the degree of the variable $z_i$ in $\mathbf{m}_2$. If $\deg_{\overrightarrow{G_2}}(i)>0$ and $d>0$, then $y_{ije}z_e|\mathbf{m}_2$ for some edge $e$, which contradicts the fact that no leading term of a degree $2$ binomial in $B_G$ divides $\mathbf{m}_2$. Hence either $\deg_{\overrightarrow{G_1}}(i)=\deg_{\overrightarrow{G_2}}(i)$ or $d=\deg_{\overrightarrow{G_1}}(i)>0$. This second case implies that $z_i|\mathbf{m}_2$, and since no degree $2$ binomial in $B_G$ divides $\mathbf{m}_2$ we conclude that. 
     
     (2) As in the previous case, the number $\deg_{\overrightarrow{G_1}}(i)$ equals the degrees of the variable $w_i$ in $\varphi(\mathbf{m}_1)$ and $\varphi(\mathbf{m}_2)$. Since $i\notin V(\overrightarrow{G_2})$, the only variable in $\mathbf{m}_2$ which contributes to a positive degree in $\varphi(\mathbf{m}_2)$ is $z_i$. 
     
     (3) Again, since $b\in I_{\mathcal{C}_G}$, the degrees of $w_e$ in $\varphi(\mathbf{m}_1)$ and $\varphi(\mathbf{m}_2)$ coincide. By \Cref{lem: yij(e)} the variable $t_e$ does not divide neither $\mathbf{m}_1$ nor $\mathbf{m}_2$, and this is the only variable $v$ such that $w_e$ has a negative degree in $\varphi(v)$. Hence $w_e$ has positive degree in both $\varphi(\mathbf{m}_1)$ and $\varphi(\mathbf{m}_2)$. Since $e\notin E(\overrightarrow{G_2})$, the only variable which contributes to a positive degree of $w_e$ in $\varphi(\mathbf{m}_2)$ is $z_e$.
\end{proof}

The following lemma collects some simple properties of directed acyclic graphs that will be of use.

\begin{lemma}\label{lem: directed acyclic properties}
    Let $H$ be a directed acyclic graph, with at least one edge and no isolated vertices. Then $H$ has at least a positive and a negative vertex. Moreover, for every positive vertex $i\in V(H)$ there exists a negative vertex $j\in V(H)$ such that $H$ contains a directed path from $i$ to $j$, and for every negative vertex $j\in V(H)$ there exists a positive vertex $i\in V(H)$ such that $H$ contains a directed path from $i$ to $j$.
\end{lemma}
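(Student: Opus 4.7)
The plan is to prove the two claims by exploiting the existence of a topological order on a DAG, combined with a restriction argument.

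For the existence of positive and negative vertices, I would first note that, since $H$ is a DAG, it must contain a \emph{source}, i.e., a vertex $v$ with $\text{indeg}_H(v)=0$ (the first vertex of any topological ordering). Since $H$ has no isolated vertices, $\text{outdeg}_H(v)\geq 1$, and hence $\deg_H(v)=\text{outdeg}_H(v)>0$, so $v$ is positive. Symmetrically, the last vertex in a topological ordering is a sink $u$ with $\text{outdeg}_H(u)=0$ and $\text{indeg}_H(u)\geq 1$, so $u$ is negative.

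For the moreover part, I would argue for positive vertices $i$ first; the claim for negative vertices follows by reversing all edges of $H$ and applying the same argument. Fix a positive vertex $i$ and let $R \subseteq V(H)$ be the set of vertices reachable from $i$ via a directed path (including $i$ itself). The induced subgraph $H[R]$ is still a DAG, so it admits a sink $j\in R$ (take, e.g., the last vertex of $R$ in a topological ordering of $H$). I claim that $j$ is a sink of $H$, not just of $H[R]$: any outgoing edge $(j,k)$ in $H$ would make $k$ reachable from $i$ (through $i\to\cdots\to j\to k$), so $k\in R$, contradicting that $j$ is a sink in $H[R]$. Hence $\text{outdeg}_H(j)=0$. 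Since $j$ is not isolated, $\text{indeg}_H(j)\geq 1$, so $j$ is negative. Finally, $j\neq i$ because $i$ is positive and therefore not a sink; so the directed path from $i$ to $j$ guaranteed by $j\in R$ is nontrivial.

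I do not expect serious obstacles here. The only subtle point is justifying that the sink of the restricted DAG $H[R]$ remains a sink of $H$, which relies on the fact that $R$ is closed under taking successors; this is immediate from the definition of $R$ but is the key structural observation that makes the argument work. The dual statement, for a negative vertex $j$, is obtained by applying the argument to the reverse digraph $H^{\text{op}}$, where sources and sinks, and positive and negative vertices, swap roles.
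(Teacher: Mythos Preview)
Your proof is correct and follows essentially the same approach as the paper: both arguments identify sources as positive vertices and sinks as negative vertices, then use that every vertex in a DAG has a sink among its descendants and a source among its ancestors. Your version is simply more explicit, proving the descendant-sink fact via the reachable set $R$ rather than asserting it, whereas the paper states it without justification (and in fact contains a typo, swapping ``positive'' and ``negative'' when describing sinks and sources).
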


\begin{proof}
Every directed acyclic graph with at least one edge has at least one sink and at least one source node.  Since sinks are positive vertices and sources are negative vertices the first claim holds.  The second claim follows from the fact that every vertex in the directed acyclic graph has at least one descendant that is a sink and every vertex has at least one source node as an ancestor.
\end{proof}
We are now ready to prove the main result of the section.
\begin{theorem}
\label{thm: gb}
    The set $B_G$ is a Gr\"{o}bner basis of $I_{\mathcal{C}_G}$ with respect to every good term order.
\end{theorem}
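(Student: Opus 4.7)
The plan is to prove the theorem by contradiction. Suppose, for some good term order, there exists an irreducible binomial $b = \mathbf{m}_1 - \mathbf{m}_2 \in I_{\mathcal{C}_G}$ with the property that no leading term of any element of $B_G$ divides $\mathbf{m}_1$ or $\mathbf{m}_2$. I would derive a contradiction by exhibiting such a dividing leading term. Lemma~\ref{lem: yij(e)} immediately removes all $t$-variables from both monomials, and the remaining fundamental leading terms from Definition~\ref{def: basis} restrict the associated directed multigraphs $(\overrightarrow{G_1}, \overrightarrow{G_2})$: each edge $e = ij$ of $G$ carries at most one orientation in each of $\overrightarrow{G_1}, \overrightarrow{G_2}$, and if $(i,j) \in E(\overrightarrow{G_1})$ then $z_j \nmid \mathbf{m}_1$ (and symmetrically for $\overrightarrow{G_2}$). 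Together with Lemma~\ref{lem: technical}, these give a tight interplay between the two graphs, notably that every negative-degree vertex of $\overrightarrow{G_1}$ appears in $\overrightarrow{G_2}$ with the same negative degree.

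Next I would dispose of the easy cases. If $\overrightarrow{G_1}$ contains a directed cycle, its edges form a cyclic pair $(E_1, \emptyset)$ of $G$, and the good term order selects the $y$-monomial of the resulting cycle binomial as leading; by construction this monomial divides $\mathbf{m}_1$, the desired contradiction. The symmetric situation for $\overrightarrow{G_2}$ is analogous, and the case $\overrightarrow{G_1} = \emptyset$ collapses: then $\mathbf{m}_1$ lies in the $z$-variable subring, $\varphi(\mathbf{m}_1) = \varphi(\mathbf{m}_2)$ combined with the absence of $t$-variables in $\mathbf{m}_2$ forces $\mathbf{m}_2$ to be a $z$-monomial too, and injectivity of $\varphi$ on $z$-monomials gives $\mathbf{m}_1 = \mathbf{m}_2$, contradicting irreducibility.

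The remaining case assumes $\overrightarrow{G_1}, \overrightarrow{G_2}$ both nonempty and directed acyclic. Using Lemma~\ref{lem: directed acyclic properties}, I start at a source of $\overrightarrow{G_1}$ and follow $\overrightarrow{G_1}$-edges until reaching a negative vertex $v$; Lemma~\ref{lem: technical}(1) places $v$ in $\overrightarrow{G_2}$ with a matching negative degree, letting the walk continue along an incoming $\overrightarrow{G_2}$-edge. Iterating this hand-off produces a walk in $G$ whose edges alternate in membership between the two graphs. Since $G$ is finite, the walk must either revisit a vertex, producing a cycle of $G$ that underlies a cyclic pair, or terminate at a vertex lying in only one graph, producing a path of $G$ that underlies a zig-zag pair in the sense of Definition~\ref{def: basis}. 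In the zig-zag case, Lemma~\ref{lem: technical}(2) supplies the required $z_{v_1}, z_{v_2}$ factors at the endpoints and Lemma~\ref{lem: technical}(3) supplies the internal $z_e$ factors; in the cyclic case, Lemma~\ref{lem: technical}(3) supplies all $z_e$ factors. Either way, the leading term of the resulting binomial from $B_G$ divides $\mathbf{m}_1$ or $\mathbf{m}_2$, completing the contradiction.

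The main obstacle is this last construction. One must simultaneously handle the multiedge multiplicities in $\overrightarrow{G_1}$ and $\overrightarrow{G_2}$, ensure that the walk traces an honest simple path or cycle of $G$ rather than reusing an edge with a conflicting orientation, and verify that the inventory of $z$-variables produced by parts (2) and (3) of Lemma~\ref{lem: technical} matches exactly the form of the zig-zag or cyclic binomial in Definition~\ref{def: basis}. The degree-matching condition of Lemma~\ref{lem: technical}(1) provides exactly the budget of edges needed to keep the walk alive at each interior vertex, and this bookkeeping is the technical heart of the argument.
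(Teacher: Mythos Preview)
Your proposal is correct and follows essentially the same approach as the paper's proof: reduce to the case where neither monomial is divisible by a fundamental leading term (hence no $t$-variables by Lemma~\ref{lem: yij(e)}), dispose of directed cycles in $\overrightarrow{G_1}$ or $\overrightarrow{G_2}$ via cycle binomials, and then build an alternating walk between the two acyclic graphs using Lemma~\ref{lem: directed acyclic properties} and Lemma~\ref{lem: technical} to locate a zig-zag or cyclic pair.

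Two small remarks. First, the paper organizes the walk slightly differently: it alternates \emph{directed paths} (from a positive vertex $i_t$ to a negative vertex $j_t$ in $\overrightarrow{G_1}$, then from a positive $i_{t+1}$ to $j_t$ in $\overrightarrow{G_2}$), not single edges, and it begins the construction at a positive vertex of $\overrightarrow{G_1}$ that is \emph{not} in $V(\overrightarrow{G_2})$ when one exists. This choice of starting point is what guarantees, via Lemma~\ref{lem: technical}(2), the factor $z_{i_1}\mid \mathbf{m}_2$ needed at the initial terminal vertex of the zig-zag binomial; your ``start at any source of $\overrightarrow{G_1}$'' would need an extra sentence to extend the walk backward if that source happens to lie in $V(\overrightarrow{G_2})$. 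Second, your explicit treatment of the degenerate case $\overrightarrow{G_1}=\emptyset$ is a nice addition the paper leaves implicit. The technical concern you flag at the end---that the walk must trace a genuine path or cycle of $G$ and that the $z$-inventory matches Definition~\ref{def: basis}---is handled in the paper at the same level of detail you give, so you are not missing anything the paper supplies.
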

% \textcolor{blue}{M: I think we might be able to reduce the GB further. From the zig-zag binomials, I think, that we only need those that start with exactly one red edge and then have arbitrarily many black edges. The reason is that the leading term of any more complicated zig-zag binomial is divisible by a one of the form I described. Right?}
% \textcolor{red}{L: cataloguing discussion from our last meeting: this is true in an executable way only for well-chosen good term orders.  We will apply it to trees to simplify that section.}
\begin{proof}
     Let $b = \mathbf{m}_1 - \mathbf{m}_2$ be a binomial in $I_{\mathcal{C}_G}$, and assume that no variable divides both $\mathbf{m}_1$ and $\mathbf{m}_2$. We prove that there exists a binomial $f\in B_G$ such that $\text{lt}(f)| \mathbf{m}_1$ or $\text{lt}(f)| \mathbf{m}_2$. This shows that any binomial in $I_{\mathcal{C}_G}$ can be reduced by an element of $B_G$. Since this reduction step produces another binomial and the sequence of reduction terminates, it must terminate with the zero polynomial. In particular, all $S$-polynomials obtained from  a generating set of binomials of $I_{\mathcal{C}_G}$ reduce to zero, which implies that $B_G$ is a Gr\"{o}bner basis.
     
     If the leading term of a fundamental binomial in $B_G$ divides either $\mathbf{m}_1$ or $\mathbf{m}_2$, then we conclude.
     
     Assume that no leading term of a fundamental binomial in $B_G$ divides either $\mathbf{m}_1$ or $\mathbf{m}_2$. In particular, by \Cref{lem: yij(e)}, no variable of the form $t_{e}$ divides either $\mathbf{m}_1$ or $\mathbf{m}_2$. Consider the pair $(\overrightarrow{G_1},\overrightarrow{G_2})$ of directed subgraphs of $G$ associated with $\mathbf{m}_1$ and $\mathbf{m}_2$.
     
If $\overrightarrow{G_1}$ ($\overrightarrow{G_2}$ respectively) has a directed cycle $C$, then by construction $\mathbf{m}_1$ ($\mathbf{m}_2$ respectively) is divisible by the monomial $\prod_{\overset{\to}{e}=(i,j)\in E(C)}y_{ije}$ which is the leading term of a cycle binomial by definition of good term order and so we conclude.
     
     Assume that both $\overrightarrow{G_1}$ and $\overrightarrow{G_2}$ are directed acyclic. Since $b$ is irreducible, $\overrightarrow{G_1}$ and $\overrightarrow{G_2}$ do not have any common directed edge, as those would correspond to variables which divide both $\mathbf{m}_1$ and $\mathbf{m}_2$.
     
     Suppose there is a positive vertex $i$ in $\overrightarrow{G_1}$ such that $i\in V(\overrightarrow{G_1})\setminus V(\overrightarrow{G_2})$. Observe that by \Cref{lem: technical} (2) this implies that $z_i|\mathbf{m}_2$. 
     We let $i_1=i$ and $j_1$ be a negative vertex of $\overrightarrow{G_1}$ such that there is a directed path from $i_1$ to $j_1$. 
     By \Cref{lem: technical} (1), $j_1$ is a negative vertex of $\overrightarrow{G_2}$ as well. 
     By \Cref{lem: directed acyclic properties}, there exists a positive vertex $i_2$ of $\overrightarrow{G_2}$ such that there is a directed path in $\overrightarrow{G_2}$ from $i_2$ to $j_2$. If $i_2\in V(\overrightarrow{G_1})$, by \Cref{lem: technical} (1), we have $\deg_{\overrightarrow{G_1}}(i_2)=\deg_{\overrightarrow{G_2}}(i_2)>0$. 
     We can then iterate this procedure until one of the following possibilities occurs:
     
     {\sf Case 1:} $i_k\notin V(\overrightarrow{G_1})$. In this case, let $E_1$ be the union of the directed edges of the directed paths from $i_t$ to $j_t$, for $t=1,\dots,k-1$ and $E_2$ be the union of the directed edges of the directed paths from $i_{t+1}$ to $j_t$, for $t=1,\dots,k-1$. Hence $(E_1,E_2)$ is a zig-zag pair. By definition of the graphs $(\overrightarrow{G_1},\overrightarrow{G_2})$ we have that $\prod_{\overset{\to}{e}=(i,j)\in E_1}y_{ije}$ divides $\mathbf{m}_1$ and $\prod_{\overset{\to}{e}=(i,j)\in E_2}y_{ije}$ divides $\mathbf{m}_2$. Moreover, by \Cref{lem: technical} (2), we have that $z_{i_k}|\mathbf{m}_1$ and that $z_{i_1}|\mathbf{m}_2$. Finally, by \Cref{lem: technical} (3), $\prod_{\overset{\to}{e}=(i,j)\in E_2} z_e$ divides $\mathbf{m}_1$ and $\prod_{\overset{\to}{e}=(i,j)\in E_1} z_e$ divides $\mathbf{m}_2$. 
      In particular, $\mathbf{m}_1$ and $\mathbf{m}_2$ are divisible by the two monomials of $b_{E_1,E_2}$, the binomial corresponding to the zig-zag pair $(E_1,E_2)$.
      
     {\sf Case 2:} $i_k=i_{\ell}$, for some ${\ell}<k$. In this case, let $E_1$ be the union of the directed edges of the directed paths from $i_{t}$ to $j_{t}$, for $t=\ell,\dots,k-1$ and $E_2$ be the union of the directed edges of the directed paths from $i_{t+1}$ to $j_t$, for $t=\ell,\dots,k-1$ together with the directed edges from $i_{k}$ to $j_{\ell}$. The pair $(E_1,E_2)$ is a cyclic pair. Again by definition of the graphs $(\overrightarrow{G_1},\overrightarrow{G_2})$, we have that $\prod_{\overset{\to}{e}=(i,j)\in E_1}y_{ije}$ divides $\mathbf{m}_1$ and $\prod_{\overset{\to}{e}=(i,j)\in E_2}y_{ije}$ divides $\mathbf{m}_2$. Moreover, by \Cref{lem: technical} (3), $\prod_{\overset{\to}{e}=(i,j)\in E_2} z_e$ divides $\mathbf{m}_1$ and $\prod_{\overset{\to}{e}=(i,j)\in E_1} z_e$ divides $\mathbf{m}_2$. In particular, $\mathbf{m}_1$ and $\mathbf{m}_1$ are divisible by the two monomials of $b_{E_1,E_2}$, the binomial corresponding to the cyclic pair $(E_1,E_2)$. This finishes Case 2.
     
     If there is a positive vertex $i$ in $\overrightarrow{G_2}$ such that  $i\in V(\overrightarrow{G_2})\setminus V(\overrightarrow{G_1})$, we can conclude by the same argument as above. 
     
     Suppose now that for all vertices $i$ with $\deg_{\overrightarrow{G_1}}(i)>0$ we have that $i\in V(\overrightarrow{G_2})$ and for all vertices $i$ with $\deg_{\overrightarrow{G_2}}(i)>0$ we have that $i\in V(\overrightarrow{G_1})$. We initialize $i_1$ to be any of the vertices with $\deg_{\overrightarrow{G_1}}(i)>0$ and, as in the previous case, we start constructing disjoint directed paths from $i_t$ to $j_{t}$ in $\overrightarrow{G_1}$ and from $i_{t+1}$ to $j_{t}$ in $\overrightarrow{G_2}$. Since the graphs $\overrightarrow{G_1}$ and $\overrightarrow{G_2}$ are finite there exists $k$ such that $i_k=i_{\ell}$ for some $\ell<k$. Let $E_1$ be the union of the directed edges of the directed paths from $i_{t}$ to $j_{t}$, for $t=\ell,\dots,k-1$ and $E_2$ be the union of the directed edges of the directed paths from $i_{t+1}$ to $j_t$, for $t=\ell,\dots,k-1$ together with the directed edges from $i_{k}$ to $j_{\ell}$. The pair $(E_1,E_2)$ is a cyclic pair. Following verbatim Case 2 we obtain that $\mathbf{m}_1$ and $\mathbf{m}_2$ are divisible by the two monomials of $b_{E_1,E_2}$, the binomial corresponding to the cyclic pair $(E_1,E_2)$.
     This completes the proof.
\end{proof}

%---SUBSECTION: The Regular Unimodular Triangulations---
\subsection{Regular unimodular triangulations}
\label{subsec: reg tri}
Initial ideals of the toric ideal $I_A$ of an integral point configuration $A$ are in correspondence with regular triangulations of the convex hull of $A$ into lattice simplices (using no additional vertices). More precisely, the radical of any initial ideal of $I_A$ is the \emph{Stanley-Reisner} ideal of a regular triangulation of $\text{conv}(A)$, i.e., the squarefree monomial ideal generated by all monomials corresponding to non-faces of the triangulation (see \cite[Theorem 8.3]{sturmfels1996grobner} or \cite[Section 9.4]{DRS}). Moreover, all regular triangulations of $\text{conv}(A)$ can be obtained in this way. Triangulations corresponding to initial ideals which are squarefree are \emph{unimodular}, meaning that the number of facets equals the normalized volume of $\text{conv}(A)$. Since by \Cref{thm: gb} the set  $B_G$ is a Gr\"{o}bner basis of $I_{\mathcal{C}_G}$  with respect to any good term order with only squarefree initial terms, we obtain the following corollary.

%---COROLLARY: Triangulation---
\begin{corollary}
\label{cor: triangulation}
    Let $G$ be any graph. The cosmological polytope $\mathcal{C}_G$ has a regular unimodular triangulation.
\end{corollary}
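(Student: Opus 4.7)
The plan is to derive this corollary directly from \Cref{thm: gb} by combining it with the standard correspondence between squarefree initial ideals of toric ideals and regular unimodular triangulations of the underlying point configuration, as recorded in \cite[Chapter 8]{sturmfels1996grobner}. The first step is simply to invoke \Cref{thm: gb}: for any good term order $\prec$, the set $B_G$ is a Gr\"obner basis of $I_{\mathcal{C}_G}$, so $\text{in}_\prec(I_{\mathcal{C}_G})$ is generated by the leading monomials of the elements of $B_G$.

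The main technical step is then to verify that every such leading monomial is squarefree. This requires a short case analysis based on \Cref{def: basis} and \Cref{def : good}. For the six fundamental binomials associated to each edge $e = ij$, the underlined leading terms $y_{ije}y_{jie}$, $y_{ije}t_e$, $y_{jie}t_e$, $y_{ije}z_j$, $y_{jie}z_i$, and $t_e z_e$ are visibly squarefree. For a zig-zag or cyclic binomial $b_{E_1,E_2}$, the two monomials of $b_{E_1,E_2}$ involve at most one $z$-variable or $y$-variable per edge of the underlying path or cycle (and, in the zig-zag case, a single $z$-variable for a terminal vertex), because a path or cycle uses each vertex and each edge at most once. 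In the special case of cycle binomials (where one of $E_1$, $E_2$ is empty), the good term order selects the monomial containing only $y$-variables, which again is squarefree since each directed edge of the cycle contributes exactly one distinct $y$-variable.

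Once squarefreeness of $\text{in}_\prec(I_{\mathcal{C}_G})$ is established, the conclusion is immediate: by \cite[Corollary 8.9]{sturmfels1996grobner}, a squarefree initial ideal of the toric ideal of a lattice point configuration is the Stanley--Reisner ideal of a regular unimodular triangulation of the convex hull of that configuration. Since the lattice point configuration defining $\varphi_G$ consists of all lattice points of $\mathcal{C}_G$, we obtain a regular unimodular triangulation of $\mathcal{C}_G$. I do not expect any serious obstacle here: the only real content beyond \Cref{thm: gb} is the elementary squarefreeness check, and the rest is a direct citation of a well-known theorem.
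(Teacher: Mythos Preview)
Your proposal is correct and follows essentially the same approach as the paper: invoke \Cref{thm: gb}, observe that all leading terms in $B_G$ are squarefree, and apply Sturmfels' correspondence between squarefree initial ideals and regular unimodular triangulations. The paper states the squarefreeness of the initial terms without the explicit case analysis you provide, but your added detail is accurate and the overall argument is the same.
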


Corollary~\ref{cor: triangulation} provides the existence of the desired subdivisions of $\mathcal{C}_G$ for any $G$.  
While the result is constructive, the presentation of the resulting triangulations is in the form of their minimal non-faces. 
In order to apply the formula in \eqref{eqn: canonical form} to compute the canonical forms $\Omega_{\mathcal{C}_G}$, we require a description of the triangulations in terms of their facets. 
In the coming sections, we give such characterizations for families of $G$. 
To derive these results we will use some observations that can be seen to hold for all regular unimodular triangulations derived from good term orders for any graph $G$. 
In this subsection, we collect these results and the relevant notation that will be used throughout the remaining sections. 

We start by introducing some notation. % that will be used in the following sections. 
In the following, let us assume that we have a graph $G=(V,E)$ and a good term order. 
By \Cref{thm: gb} a Gr\"obner basis with squarefree initial ideal is given by the fundamental binomials, the zig-zag binomials and the cyclic binomials.
Since the cosmological polytope $\mathcal{C}_{G}$ has dimension $|V| + |E| -1$, the corresponding regular unimodular triangulation has facets given by all $(|V|+|E|)$-subsets of the variables $y_{ije}, y_{jie}, t_e, z_e, z_i$ that do not contain any leading term of the binomials in this Gr\"obner basis. 

The fundamental binomials imply that certain 2-subsets of variables cannot be contained in the facets.  
These 2-subsets to be avoided for each edge $e=ij\in E$ correspond to the edges of the following graph:
\begin{center}
% \bigskip
    \begin{tikzpicture}[thick,scale=0.4]
        \tikzset{decoration={snake,amplitude=.4mm,segment length=2mm, post length=0mm,pre length=0mm}}
	
	%---NODES---	 
        \node[circle, draw = black!100, fill=black!00, inner sep=2pt, minimum width=2pt] (z1) at (-2,0) {\scriptsize $z_i$};
        \node[circle, draw = black!100, fill=black!00, inner sep=2pt, minimum width=2pt] (z12) at (4,0) {\scriptsize $t_{e}$};
        \node[circle, draw = black!100, fill=black!00, inner sep=2pt, minimum width=2pt] (z2) at (10,0) {\scriptsize $z_{j}$};
        \node[circle, draw = black!100, fill=black!00, inner sep=2pt, minimum width=2pt] (y12) at (0,-6) {\scriptsize $y_{ije}$};
        \node[circle, draw = black!100, fill=black!00, inner sep=2pt, minimum width=2pt] (y21) at (8,-6) {\scriptsize $y_{jie}$};
        \node[circle, draw = black!100, fill=black!00, inner sep=2pt, minimum width=2pt] (t12) at (4,-8) {\scriptsize $z_{e}$};

	%---EDGES---
        \draw[-]    (y12) edge (y21) ;
        \draw[-]    (z12) edge (y21) ;
        \draw[-]    (z12) edge (y12) ;
        \draw[-]    (z12) edge (t12) ;
        \draw[-]    (z2) edge (y12) ;
        \draw[-]    (z1) edge (y21) ;
	  	
    \end{tikzpicture}
% \bigskip
\end{center}

To represent the facets of the triangulation, we introduce a symbol corresponding to each variable: Let $i\in V$ and $e=ij\in E$:
\begin{itemize}
    \item the variable $z_i$ is represented by the symbol $\circ$.  The vertex $i$ is instead represented by $\bullet$ if $z_i$ is not present. 
    \item the variable $z_{e}$ is represented by the edge type $-$,
    \item the variable $t_{e}$ is represented by the edge type $\middlewave{0.5cm}$,
    \item the variable $y_{ije}$ is represented by the edge type $\rightarrow$ pointing from $i$ to $j$, and 
    \item the variable $y_{jie}$ is represented by the edge type $\leftarrow$ pointing from $j$ to $i$. 
\end{itemize}
Given a subset $S$ of the generators of $R_{G}$, we let $G_S$ denote the graph drawn with the symbols above according to the elements in $S$.  
We also let $Z \coloneqq= \{z_i~ :~ i\in V\}$, $Z_S \coloneqq S\cap Z$ and $\mathfrak{Z}_S \coloneqq \{ i \in V~ : ~z_i \in Z_S\}$.
For example, if $G$ is a path on $3$ vertices, we represent the set of variables $S = \{z_1, z_3, y_{12}, z_{12}, t_{23}\}$ via the graph $G_S$:
\begin{center}
\bigskip

    \begin{tikzpicture}[thick,scale=0.5]
        \tikzset{decoration={snake,amplitude=.4mm,segment length=2mm, post length=0mm,pre length=0mm}}
	
	%---NODES---	 
        \node[circle, draw = black!100, fill=black!00, inner sep=2pt, minimum width=2pt] (1) at (0,0) {};
        \node[circle, draw = black!100, fill=black!100, inner sep=2pt, minimum width=2pt] (2) at (2,0) {};
        \node[circle, draw = black!100, fill=black!00, inner sep=2pt, minimum width=2pt] (3) at (4,0) {};

	%---EDGES---	 
	\draw[->]   (1) edge[bend left] (2) ;
        \draw[-]    (1) edge (2) ;
        \draw[decorate] (2) -- (3) ; 
	 
	%---LABELS---
	\node at (0,-1) {\footnotesize 1} ;
	\node at (2,-1) {\footnotesize 2} ;
	\node at (4,-1) {\footnotesize 3} ;
	  	
    \end{tikzpicture}
\bigskip

\end{center}
For this example, $Z = \{z_1,z_2,z_3\}$, $Z_S = \{z_1, z_3\}$ and $\mathfrak{Z}_S = \{1,3\}$. 

The fundamental binomials imply that if a subset of variables corresponds to a face of the triangulation, then its associated graph does not contain any of the following subgraphs:

% \begin{center}
% \bigskip
\begin{equation}
\label{eqn: fundamental subgraphs}
    \begin{tikzpicture}[thick,scale=0.5]
        \tikzset{decoration={snake,amplitude=.4mm,segment length=2mm, post length=0mm,pre length=0mm}}
	
	%---NODES---	 
        \node[circle, draw = black!00, fill=black!00, inner sep=2pt, minimum width=2pt] (1) at (0,0) {};
        \node[circle, draw = black!00, fill=black!00, inner sep=2pt, minimum width=2pt] (2) at (2,0) {};

        \node[circle, draw = black!00, fill=black!00, inner sep=2pt, minimum width=2pt] (3) at (0+4,0) {};
        \node[circle, draw = black!00, fill=black!00, inner sep=2pt, minimum width=2pt] (4) at (2+4,0) {};

        \node[circle, draw = black!00, fill=black!00, inner sep=2pt, minimum width=2pt] (5) at (0+8,0) {};
        \node[circle, draw = black!00, fill=black!00, inner sep=2pt, minimum width=2pt] (6) at (2+8,0) {};

        \node[circle, draw = black!00, fill=black!00, inner sep=2pt, minimum width=2pt] (7) at (0+12,0) {};
        \node[circle, draw = black!00, fill=black!00, inner sep=2pt, minimum width=2pt] (8) at (2+12,0) {};

        \node[circle, draw = black!100, fill=black!00, inner sep=2pt, minimum width=2pt] (9) at (0+16,0) {};
        \node[circle, draw = black!00, fill=black!00, inner sep=2pt, minimum width=2pt] (10) at (2+16,0) {};

        \node[circle, draw = black!00, fill=black!00, inner sep=2pt, minimum width=2pt] (11) at (0+20,0) {};
        \node[circle, draw = black!100, fill=black!00, inner sep=2pt, minimum width=2pt] (12) at (2+20,0) {};

	%---EDGES---	 
	\draw[-]   (1) edge[bend left] (2) ;
        \draw[decorate]    (1) -- (2) ;

        \draw[->]   (3) edge[bend left] (4) ;
        \draw[decorate]    (3) -- (4) ;

        \draw[<-]   (5) edge[bend left] (6) ;
        \draw[decorate]    (5) -- (6) ;

        \draw[->]   (7) edge[bend left] (8) ;
        \draw[<-]    (7) edge[bend right] (8) ;

        \draw[<-]   (9) edge (10) ;

        \draw[->]   (11) edge (12) ;
	 
	%---LABELS---
	% \node at (0,-1) {\footnotesize $i$} ;
	% \node at (2,-1) {\footnotesize $i+1$} ;
	% \node at (4,-1) {\footnotesize 3} ;
	  	
    \end{tikzpicture}
\end{equation}
% \bigskip
% \end{center}
We refer to these six subgraph as \emph{fundamental obstructions}. 
The following lemma is immediate.
%---LEMMA: Only Double Edges---
\begin{lemma}
    \label{lem: only double}
    Let $G$ be a simple, connected and undirected graph, and let $\mathcal{T}$ be a regular unimodular triangulation of $\mathcal{C}_G$ given by a good term order on $R_G$.  
    If $S$ is a facet of $\mathcal{T}$, then $G_S$ contains only single edges and double edges. 
    Moreover, any double edges are of the form 
    \begin{center}
    \begin{tikzpicture}[thick,scale=0.5]
        \tikzset{decoration={snake,amplitude=.4mm,segment length=2mm, post length=0mm,pre length=0mm}}
	
	%---NODES---	 
        \node[circle, draw = black!00, fill=black!00, inner sep=2pt, minimum width=2pt] (1) at (0,0) {};
        \node[circle, draw = black!00, fill=black!00, inner sep=2pt, minimum width=2pt] (2) at (2,0) {};

        \node[circle, draw = black!00, fill=black!00, inner sep=2pt, minimum width=2pt] (3) at (10,0) {};
        \node[circle, draw = black!00, fill=black!00, inner sep=2pt, minimum width=2pt] (4) at (12,0) {};

	%---EDGES---	 
	\draw[->]   (1) edge[bend left] (2) ;
        \draw[-]    (1) -- (2) ;

        \draw[<-]   (3) edge[bend left] (4) ;
        \draw[-]    (3) -- (4) ;

        \node at (6,0) {or} ;
    \end{tikzpicture}.
    \end{center}
\end{lemma}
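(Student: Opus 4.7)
The plan is to analyze, edge by edge, which subsets of the four $e$-associated variables $\{z_e, y_{ije}, y_{jie}, t_e\}$ can simultaneously occur in a facet $S$. Since the only binomials in $B_G$ whose leading terms are supported entirely on variables associated with a single edge $e=ij$ are the six fundamental binomials (together with those involving $z_i$ or $z_j$), the constraint on $S$ coming from a fixed edge $e$ reduces to the requirement that $S_e := S \cap \{z_e, y_{ije}, y_{jie}, t_e\}$ contain no forbidden pair.

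First I would list the forbidden $2$-element subsets among $\{z_e, y_{ije}, y_{jie}, t_e\}$ read off from the underlined leading terms of the fundamental binomials in \Cref{def: basis}: namely $\{y_{ije}, y_{jie}\}$, $\{y_{ije}, t_e\}$, $\{y_{jie}, t_e\}$, and $\{z_e, t_e\}$. Enumerating the $\binom{4}{2}=6$ pairs then shows that the only pairs that may lie in $S_e$ are $\{z_e, y_{ije}\}$ and $\{z_e, y_{jie}\}$. Any $3$- or $4$-subset of these variables contains one of the forbidden pairs, so $|S_e|\le 2$. Translating back through the symbol dictionary, $|S_e|=1$ produces a single edge between $i$ and $j$ in $G_S$, while $|S_e|=2$ yields exactly one of the two allowed double edges depicted in the statement, consisting of a $-$ edge together with an arrow $\to$ or $\leftarrow$.

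Finally, since $G$ is assumed simple, distinct edges of $G_S$ between the same pair of vertices must come from the variables associated to a single edge $e$ of $G$, so the per-edge analysis above governs the entire multi-edge structure of $G_S$. Combining these observations immediately yields the statement. The argument is essentially a finite case check; the only subtlety is making sure that no additional multi-edge configuration could be introduced by the zig-zag or cyclic binomials, but these have supports involving variables from at least two distinct edges of $G$ and therefore impose no further restriction on any fixed $S_e$. Hence no genuine obstacle arises beyond tabulating the fundamental obstructions, and the lemma follows.
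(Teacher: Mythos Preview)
Your proof is correct and is essentially the same as the paper's approach: the paper simply declares the lemma ``immediate'' from the list of fundamental obstructions in \eqref{eqn: fundamental subgraphs}, and your argument is precisely the per-edge case check that makes this immediacy explicit. The only difference is that you spell out the enumeration of allowed subsets $S_e\subseteq\{z_e,y_{ije},y_{jie},t_e\}$, which the paper leaves to the reader.
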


Given a subset $S$ of the variables in $R_G$, we define the \emph{support graph} of $S$ (or $G_S$) to be the graph on vertex set $V$ and edge set 
\[
\{e=ij\in E~:~S\cap\{t_e,z_e,y_{ije},y_{jie}\}\neq \emptyset\}.
\]

The following statement shows that the support graph of any facet is as large as possible.

%---PROPOSITION: Connected---
\begin{proposition}
\label{prop: connected}
Let $G=(V,E)$ be a connected, undirected graph and let $\mathcal{T}$ be a triangulation of $\mathcal{C}_G$ coming from a good term order. Let $S$ be a subset of the variables of $R_G$. If $S$ is a facet of $\mathcal{T}$, then the support graph of $S$ equals $G$. In particular, the support graph of $S$ is connected.
\end{proposition}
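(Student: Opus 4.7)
The plan is to proceed by contradiction using a simple coordinate-projection argument. Suppose $S$ is a facet of $\mathcal{T}$ but the support graph of $S$ strictly omits some edge $e = ij \in E$, so that by definition $S \cap \{t_e, z_e, y_{ije}, y_{jie}\} = \emptyset$. The idea is to exploit the fact that these four variables are the only ones whose images under $\varphi$ have a nonzero $x_e$-coordinate.

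Let $\pi_e : \RR^{|V|+|E|} \to \RR$ denote the projection onto the $x_e$-coordinate. Inspecting the definition of $\varphi$, I would verify by a short case check that every generator of $R_G$ lying outside $\{t_e, z_e, y_{ije}, y_{jie}\}$ is sent to a lattice point of $\mathcal{C}_G$ with $\pi_e$-value equal to $0$: the vertex variables $z_k$ for $k \in V$ map to $x_k$, and for each edge $e' \neq e$ the four associated variables $z_{e'}, y_{i'j'e'}, y_{j'i'e'}, t_{e'}$ map to lattice points whose $x_e$-coordinate vanishes. Consequently every lattice point indexed by $S$ lies in the hyperplane $H_e := \{\pi_e = 0\}$, and hence so does the affine hull of the simplex spanned by $S$.

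Since $\mathcal{T}$ is a triangulation of $\mathcal{C}_G$, the facet $S$ spans a simplex of dimension $\dim \mathcal{C}_G = |V|+|E|-1$, so its affine hull equals that of $\mathcal{C}_G$. But the affine hull of $\mathcal{C}_G$ is not contained in $H_e$: for instance, the vertex $x_i+x_j-x_e$ of $\mathcal{C}_G$ satisfies $\pi_e = -1 \neq 0$. This contradicts the previous paragraph and forces the support graph of $S$ to contain every edge of $E$, i.e., to equal $G$. The ``in particular'' clause is then immediate from the connectedness of $G$.

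The only real work is the routine verification that every remaining variable is indeed sent to a lattice point with $\pi_e = 0$; this reduces to reading off the definition of $\varphi$ on its three families of generators, and the dimensional contradiction is then formal. I do not expect to need the zig-zag or cyclic parts of the Gr\"obner basis at all for this proposition, only the basic dimension of $\mathcal{C}_G$ and the coordinatewise behaviour of $\varphi$.
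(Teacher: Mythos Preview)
Your argument is correct and takes a genuinely different route from the paper. The paper's proof is purely combinatorial: assuming $S$ misses all four variables for some edge $e$, it observes that $t_e$ occurs in the Gr\"obner basis $B_G$ only in the leading terms $y_{ije}t_e$, $y_{jie}t_e$, and $t_ez_e$ of fundamental binomials; since none of $y_{ije}$, $y_{jie}$, $z_e$ lies in $S$, the set $S\cup\{t_e\}$ still avoids every leading term and hence is a face of $\mathcal{T}$, contradicting maximality of $S$.

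Your approach is geometric instead: you never touch the Gr\"obner basis and argue directly that the lattice points indexed by $S$ all sit in the coordinate hyperplane $\{x_e=0\}$, which cannot contain a full-dimensional simplex of $\mathcal{C}_G$. This has the pleasant feature that it works for \emph{any} triangulation of $\mathcal{C}_G$ into lattice simplices, not only those coming from a good term order; so you actually prove a slightly stronger statement. The paper's argument, on the other hand, is self-contained within the Stanley--Reisner framework set up in Section~2 and names explicitly the variable ($t_e$) one would add to enlarge $S$, which is useful for the later combinatorial analysis of facets. Both arguments are short; yours trades knowledge of the specific initial ideal for a one-line affine-hull observation.
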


\begin{proof}
    Let $S$ be a facet of $\mathcal{T}$ and assume by contradiction that there is some edge $e=ij\in E$ such that $S\cap\{t_e,z_e,y_{ije},y_{jie}\}=\emptyset$. We note that the variable $t_e$ does not appear in any of the zig-zag binomials, the cyclic binomials nor the cyclic binomials. The only occurrence of $t_e$ is in the leading term of the fundamental binomials $y_{ije}t_e-z_i^2$, $y_{jie}t_e-z_j^2$ and $t_ez_e-z_iz_j$. However, since by assumption, none of $y_{ije}$, $y_{jie}$ and $z_e$ is contained in $S$, it follows that $S\cup\{t_e\}$ is also a face of $\mathcal{T}$. This contradicts the fact that $S$ is a facet.
\end{proof}

In the coming sections we apply these results to derive explict characterizations of the facets of regular unimodular triangulations of $\mathcal{C}_G$ arising from good term orders on $R_G$ for special instances of $G$. 
In Section~\ref{sec: the path}, we characterize the facets of this triangulation for a specific good term order when $G$ is the path graph. 
In Section~\ref{sec: the cycle}, we show that the techniques in Section~\ref{sec: the path} can be extended to yield an analogous characterization of the facets of a triangulation for the cycle. 
Finally, in Section~\ref{sec: trees}, we extend the characterization of the facets of the triangulation for paths to general trees. 

%---THE PATH---
\section{The Cosmological Polytope of the Path}
\label{sec: the path}
In this section, we give an explicit description of the regular unimodular triangulation corresponding to a Gr\"obner basis with respect to a good term order of the toric ideal for the cosmological polytope of the \emph{n-path}, $I_n$; that is, the graph with vertex set $V =[n+1]$ and edge set
$
E = \{ii+1~ :~ i\in[n]\}.
$

A combinatorial description of the facets of this polytope is given that allows for enumeration of the facets.
The resulting formula for the normalized volume of $\mathcal{C}_{I_n}$ agrees with the formula identified in \cite{kuhne2022faces}.
The combinatorial description of the facets may also be used to compute the canonical form of the polytope in a novel way, which may suggest new physical theories for the computation of wavefunctions associated to such Feynman diagrams.
% The model for the facets also allows us to identify a shelling order of the triangulation which is used to compute the Ehrhart $h^\ast$-polynomial of $\mathcal{C}_{I_n}$.

In the following, we use the variable order
\begin{equation}
\begin{split}
\label{eqn: path variable order}
&y_{12} > y_{23} > \cdots > y_{n-1n} > y_{nn-1} > \cdots > y_{32} > y_{21} > z_{12} > \cdots\\
&\cdots> z_{n-1n} > t_{12} > \cdots > t_{n-1n} > z_1 > \cdots > z_n,
\end{split}
\end{equation}
where for the edge $e=ii+1$, we write $y_{ii+1}$ and $y_{i+1i}$ for the variables $y_{ii+1e}$ and $y_{i+1ie}$, respectively. 
It can be checked that the lexicographic term order, with respect to this ordering of the variables, on the monomials in $R_{I_n}$ is a good term order according to \Cref{def : good}.  
%Since $I_n$ contains no cycles, it follows that a Gr\"obner basis with squarefree initial ideal for $I_{I_n}$ is given by the fundamental binomials together with the binomials arising as zig-zag pairs.  
Since the cosmological polytope $\mathcal{C}_{G}$ for a graph $G = (V, E)$ has dimension $|V| + |E| -1$, the corresponding regular unimodular triangulation has facets given by all $(2n+1)$-subsets of the variables $y_{ije}, y_{jie}, t_e, z_e, z_i$ that do not contain the leading terms of the binomials in this Gr\"obner basis. Our goal is to characterize these subsets $S$ in terms of the structure of their graphs $G_S$ defined in Subsection~\ref{subsec: reg tri}.

By Proposition~\ref{prop: connected}, we know that $G_S$ is connected whenever $S$ is a facet.  
We also know from Lemma~\ref{lem: only double} that all edges in $G_S$ are either single or double edges and all double edges are of the form
\begin{center}
    \begin{tikzpicture}[thick,scale=0.5]
        \tikzset{decoration={snake,amplitude=.4mm,segment length=2mm, post length=0mm,pre length=0mm}}
	
	%---NODES---	 
        \node[circle, draw = black!00, fill=black!00, inner sep=2pt, minimum width=2pt] (1) at (0,0) {};
        \node[circle, draw = black!00, fill=black!00, inner sep=2pt, minimum width=2pt] (2) at (2,0) {};

        \node[circle, draw = black!00, fill=black!00, inner sep=2pt, minimum width=2pt] (3) at (10,0) {};
        \node[circle, draw = black!00, fill=black!00, inner sep=2pt, minimum width=2pt] (4) at (12,0) {};

	%---EDGES---	 
	\draw[->]   (1) edge[bend left] (2) ;
        \draw[-]    (1) -- (2) ;

        \draw[<-]   (3) edge[bend left] (4) ;
        \draw[-]    (3) -- (4) ;

        \node at (6,0) {or} ;
    \end{tikzpicture}
    \end{center}
Since the path graph contains no cycles, the Gr\"obner basis given in Theorem~\ref{thm: gb} for $I_{I_{n}}$ contains only the fundamental binomials and zig-zag binomials. 
Now that we have specified a specific good term order on $R_{I_{n}}$ we can also identify the subgraphs forbidden by the leading terms of the zig-zag binomials.

Namely, if $S$ is a facet of the triangulation then $G_S$ does not contain any partially directed paths oriented to the right that end with a $\circ$; that is, it does not contain any subgraphs of the following form:
Let $\pi = \{ii+1, i+1i+2,\ldots, j-1j\}$ with $1\leq i<j\leq n+1$ be a subpath of $I_n$.
Given a partition $(E_1,E_2)$ of the edges of $\pi$ with $E_1\neq \emptyset$, we call the graph $G_R$ for the set of symbols 
\[
R = \{y_{\ell\ell+1} ~:~ \ell\ell+1 \in E_1\}\cup\{z_{\ell\ell+1}~ :~ \ell\ell+1 \in E_2\} \cup \{z_j\}
\]
a \emph{partially directed path to the right (ending in $\circ$)}. 
For example, if $S$ is a facet it cannot contain the subset of symbols $R$ yielding the following graph:

\begin{center}
\bigskip

    \begin{tikzpicture}[thick,scale=0.5]
        \tikzset{decoration={snake,amplitude=.4mm,segment length=2mm, post length=0mm,pre length=0mm}}
	
	%---NODES---	 
        \node[circle, draw = black!100, fill=black!100, inner sep=2pt, minimum width=2pt] (1) at (0,0) {};
        \node[circle, draw = black!100, fill=black!100, inner sep=2pt, minimum width=2pt] (2) at (2,0) {};

        \node[circle, draw = black!100, fill=black!100, inner sep=2pt, minimum width=2pt] (3) at (0+4,0) {};
        \node[circle, draw = black!100, fill=black!100, inner sep=2pt, minimum width=2pt] (4) at (2+4,0) {};

        \node[circle, draw = black!100, fill=black!100, inner sep=2pt, minimum width=2pt] (5) at (0+8,0) {};
        \node[circle, draw = black!100, fill=black!100, inner sep=2pt, minimum width=2pt] (6) at (2+8,0) {};

        \node[circle, draw = black!100, fill=black!00, inner sep=2pt, minimum width=2pt] (7) at (0+12,0) {};

	%---EDGES---	 
	\draw[-]    (1) -- (2) ;
        \draw[->]    (2) -- (3) ;
        \draw[->]    (3) -- (4) ;
        \draw[-]    (4) -- (5) ;
        \draw[->]    (5) -- (6) ;
        \draw[-]    (6) -- (7) ;
	  	
    \end{tikzpicture}
\bigskip

\end{center}

The following lemma collects some additional useful properties of $G_S$ when $S$ is a facet.  
%---LEMMA: Facet Means Connected---
\begin{lemma}
\label{lem: facet means connected}
Let $S$ be a subset of the variables generating the ring $R_{I_n}$.  If $S$ is a facet of the triangulation and 
$
\mathfrak{Z}_S = \{i_1< i_2 <\cdots < i_{n+1-k}\}, %= S\cap \{ z_i : i\in[n+1]\},
$
it follows that
\begin{enumerate}
    \item $G_S$ contains exactly $k$ double edges,
 %    \item all double edges in $G_S$ are of the form 
 %    \begin{center}
 %    \begin{tikzpicture}[thick,scale=0.5]
 %        \tikzset{decoration={snake,amplitude=.4mm,segment length=2mm, post length=0mm,pre length=0mm}}
	
	% %---NODES---	 
 %        \node[circle, draw = black!00, fill=black!00, inner sep=2pt, minimum width=2pt] (1) at (0,0) {};
 %        \node[circle, draw = black!00, fill=black!00, inner sep=2pt, minimum width=2pt] (2) at (2,0) {};

 %        \node[circle, draw = black!00, fill=black!00, inner sep=2pt, minimum width=2pt] (3) at (10,0) {};
 %        \node[circle, draw = black!00, fill=black!00, inner sep=2pt, minimum width=2pt] (4) at (12,0) {};

	% %---EDGES---	 
	% \draw[->]   (1) edge[bend left] (2) ;
 %        \draw[-]    (1) -- (2) ;

 %        \draw[<-]   (3) edge[bend left] (4) ;
 %        \draw[-]    (3) -- (4) ;

 %        \node at (6,0) {or} ;
 %    \end{tikzpicture}
 %    \end{center}
    \item the number of double edges in the induced subgraph of $G_S$ on $[i_1]$ is $i_1 - 1$,
    \item the number of double edges in the induced subgraph of $G_S$ on $\{i_j,\ldots, i_{j+1}\}$ is $i_{j+1} - i_j - 1$, for all $j\in[n-k]$, and
    \item the number of double edges in the induced subgraph of $G_S$ on $\{i_{n-k+1},\ldots, n+1\}$ is $n+1 - i_{n-k+1}$.
\end{enumerate}
\end{lemma}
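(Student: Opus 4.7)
The plan is to obtain item (1) by a direct counting argument and then to deduce items (2), (3), and (4) from a single structural claim: every middle segment of $G_S$ contains at least one single edge. A global count of single edges then determines the precise distribution.

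For (1), \Cref{prop: connected} forces the support graph of $S$ to equal $I_n$, so each of the $n$ edges contributes either one or two variables to $S$ according to whether it is single or double. Writing $d$ for the number of double edges, the identity $|S| = |\mathfrak{Z}_S| + n + d$ combined with $|S| = 2n+1$ and $|\mathfrak{Z}_S| = n+1-k$ yields $d = k$.

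The main structural claim is that for every $j \in [n-k]$, the middle segment on $\{i_j, \ldots, i_{j+1}\}$ contains at least one single edge. To prove it, note that both endpoints of the segment carry $\circ$. The fundamental obstructions $y_{ije}z_j$ and $y_{jie}z_i$ prevent any arrow of $G_S$ from pointing into a $\circ$ vertex, so by \Cref{lem: only double} every double edge incident to $i_j$ as its left endpoint must carry a rightward arrow, while every double edge incident to $i_{j+1}$ as its right endpoint must carry a leftward arrow. If $i_{j+1} = i_j + 1$, the unique edge of the segment would need both orientations simultaneously to be double, so it is single. If $i_{j+1} - i_j \geq 2$, I argue by contradiction: assuming every edge of the segment is double, the leftmost edge $e = (i_j, i_j+1)$ carries a rightward arrow, contributing its forward $y$-variable to $S$. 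Then the partition $P_1 = \{e\}$ and $P_2 = \{(i_j+1, i_j+2), \ldots, (i_{j+1}-1, i_{j+1})\}$ of the edges of the subpath $\{i_j, \ldots, i_{j+1}\}$ is a zig-zag pair whose leading term is
\[
z_{i_{j+1}} \cdot y_{i_j, i_j+1} \cdot \prod_{e' \in P_2} z_{e'},
\]
and every factor lies in $S$: the forward $y$-variable by the preceding observation, each $z_{e'}$ because $e'$ is double, and $z_{i_{j+1}}$ because $i_{j+1} \in \mathfrak{Z}_S$. This contradicts the fact that $S$ is a face of $\mathcal{T}$, and is the main obstacle the argument has to clear.

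To conclude, item (1) provides exactly $n-k$ single edges in $G_S$. Writing $s_L$, $s_M$, $s_R$ for the number of single edges in the left end segment, all middle segments combined, and the right end segment respectively, the structural claim gives $s_M \geq n-k$. Together with $s_L, s_R \geq 0$ and $s_L + s_M + s_R = n-k$, this forces $s_L = s_R = 0$ and that each of the $n-k$ middle segments contains exactly one single edge. Items (2), (3), and (4) then follow by subtracting single from total edge counts in each segment, with the degenerate cases $i_1 = 1$ and $i_{n+1-k} = n+1$ handled trivially.
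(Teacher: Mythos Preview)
Your proof is correct and follows essentially the same route as the paper's. Both arguments obtain (1) by the identical count using \Cref{prop: connected} and \Cref{lem: only double}, then show that a middle segment cannot consist entirely of double edges (you phrase this as ``at least one single edge,'' the paper as ``at most $i_{j+1}-i_j-1$ double edges''), and finish with the same pigeonhole step to force the exact distribution. Your explicit identification of the specific zig-zag pair $(P_1,P_2)$ and its leading term is a slightly more concrete version of the paper's appeal to the forbidden ``partially directed path to the right ending in $\circ$,'' but the underlying obstruction is the same.
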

\begin{proof}

Since $S$ is a facet we know that $|S| = 2n+1$.  
We know also from Proposition~\ref{prop: connected} that the support graph of $G_S$ is connected and equal to $I_{n}$. 
Hence, there is at least one edge in $G_S$ for all $n$ edges in $I_{n}$. 
Moreover, by Lemma~\ref{lem: only double} we know that $G_S$ only contains single and double edges. 
Since $|Z_S| = n+1-k$ and $|S| = 2n+1$, it follows that $G_S$ contains exactly $k$ double edges. 

Consider now the subgraph between $i < j$ of $G_S$ where $z_i, z_j \in S$ and $z_\ell\notin S$ for all $i < \ell < j$.  
We claim that there are at most $j - i - 1$ double edges in this subgraph.  
To see this, suppose there are $j-i$ double edges instead.  
It follows that all edges in this subgraph are double and of the form specified in Lemma~\ref{lem: only double}. 
Since the subgraph cannot include the fundamental obstruction $\circ\leftarrow$, it follows that the first pair of double edges is of the form
\begin{center}
% \bigskip

    \begin{tikzpicture}[thick,scale=0.5]
        \tikzset{decoration={snake,amplitude=.4mm,segment length=2mm, post length=0mm,pre length=0mm}}
	
	%---NODES---	 
        \node[circle, draw = black!100, fill=black!00, inner sep=2pt, minimum width=2pt] (1) at (0,0) {};
        \node[circle, draw = black!100, fill=black!100, inner sep=2pt, minimum width=2pt] (2) at (2,0) {};

	%---EDGES---	 
	\draw[->]   (1) edge[bend left] (2) ;
        \draw[-]    (1) edge (2) ;
	 
	%---LABELS---
	\node at (0,-1) {\footnotesize $i$} ;
	\node at (2,-1) {\footnotesize $i+1$} ;
	  	
    \end{tikzpicture}
% \bigskip

\end{center}
Since all remaining edges in the subgraph must also be double edges, and since these sets of doubles must each include the undirected edge $\ell\ell+1$ (for $i\leq \ell\leq j-1$), it follows that the subgraph contains a partially directed path to the right ending in $\circ$, which is a forbidden subgraph by the leading term of some zig-zag binomial. 
Hence, we have a contradiction.  

It then follows from the Pigeonhole Principle that each subgraph of $G_S$ given by a pair of nodes $i<j$ for $z_i,z_j\in S$ but $z_\ell\notin S$ for all $i<\ell<k$, or $z_i\in S$ but $z_\ell\notin S$ for all $\ell < i$, or $z_i \in S$ but $z_\ell\notin S$ for all $i <\ell$ contains exactly as many double edges as it does black nodes. 
This proves claims (2) -- (4).
\end{proof}

Based on Lemma~\ref{lem: facet means connected}, it can be helpful to consider facets according to their intersection with the set $Z = \{z_i~ :~ i\in[n+1]\}$.  
If a facet $S$ is such that $Z_S = S\cap Z = \{z_{i_1},\ldots, z_{i_k}\}$, where $i_1<i_2<\cdots<i_k$, we can partition the graph $G_S$ into the induced subgraphs on node sets $\{1,\ldots, i_1\}$, $\{i_k,\ldots, n+1\}$ and $\{i_j, i_j+1,\ldots, i_{j+1}\}$ for all $j\in[k-1]$, and consider the possible placements of the appropriate number of edges in each induced subgraph so as to ensure that $|S| = 2n+1$.  
A rule for producing all such graphs in this way will yield a combinatorial description of the facets of the triangulation. 
The next theorem gives such a characterization of the graphs that correspond to facets of the triangulation.
In the following we use $\leftrightarrow$ to denote that we are free to choose between either arrow (either $\leftarrow$ or $\rightarrow$). 
\begin{theorem}
\label{lem: path tri facets}
Let $S$ be a subset of the generators of the ring $R_{I_{n}}$ and let $Z_S = \{z_{i_1},\ldots, z_{i_k}\}$ where $i_1<\cdots < i_k$.  
Then $S$ is a facet of the triangulation of $\mathcal{C}_{I_{n}}$ corresponding to the lexicographic order induced by \eqref{eqn: path variable order} if and only if all three of the following hold:
    \begin{enumerate}
        \item The induced subgraph of $G_S$ on nodes $[i_1]$ is of the form
        \begin{center}
        \medskip
        
        \begin{tikzpicture}[thick,scale=0.5]
        \tikzset{decoration={snake,amplitude=.4mm,segment length=2mm, post length=0mm,pre length=0mm}}
	
	%---NODES---	 
        \node[circle, draw = black!100, fill=black!100, inner sep=2pt, minimum width=2pt] (1) at (0,0) {};
        \node[circle, draw = black!100, fill=black!100, inner sep=2pt, minimum width=2pt] (2) at (2,0) {};

        \node[circle, draw = black!100, fill=black!100, inner sep=2pt, minimum width=2pt] (3) at (0+4,0) {};
        \node[circle, draw = black!100, fill=black!100, inner sep=2pt, minimum width=2pt] (4) at (2+4,0) {};

        \node[circle, draw = black!100, fill=black!100, inner sep=2pt, minimum width=2pt] (5) at (0+8,0) {};
        \node[circle, draw = black!100, fill=black!100, inner sep=2pt, minimum width=2pt] (6) at (2+8,0) {};

        \node[circle, draw = black!100, fill=black!100, inner sep=2pt, minimum width=2pt] (7) at (0+12,0) {};
        \node[circle, draw = black!100, fill=black!00, inner sep=2pt, minimum width=2pt] (8) at (2+12,0) {};
        
	%---EDGES---	 
	\draw[-]    (1) -- (2) ;
        \draw[-]    (2) -- (3) ;
        \draw[dotted]    (3) -- (4) ;
        \draw[-]    (4) -- (5) ;
        \draw[-]    (5) -- (6) ;
        \draw[-]    (6) -- (7) ;
        \draw[-]    (7) -- (8) ;

        \draw[<-]    (1) edge[bend left] (2) ;
        \draw[<-]    (2) edge[bend left] (3) ;
	  \draw[<-]    (4) edge[bend left] (5) ;
        \draw[<-]    (5) edge[bend left] (6) ;
        \draw[<-]    (6) edge[bend left] (7) ;
        \draw[<-]    (7) edge[bend left] (8) ;
        
        \end{tikzpicture}.
        \medskip
        
        \end{center}
        That is, all edges are double with a $\leftarrow$.
        
        \item For all $j\in[k-1]$, the induced subgraph of $G_S$ on $\{i_j,i_j+1,\ldots, i_{j+1}\}$ is of the form
        \begin{center}
        \medskip
        
        \begin{tikzpicture}[thick,scale=0.5]
        \tikzset{decoration={snake,amplitude=.4mm,segment length=2mm, post length=0mm,pre length=0mm}}
	
	%---NODES---	 
        \node[circle, draw = black!100, fill=black!00, inner sep=2pt, minimum width=2pt] (1) at (0,0) {};
        \node[circle, draw = black!100, fill=black!100, inner sep=2pt, minimum width=2pt] (2) at (2,0) {};

        \node[circle, draw = black!100, fill=black!100, inner sep=2pt, minimum width=2pt] (3) at (0+4,0) {};
        \node[circle, draw = black!100, fill=black!100, inner sep=2pt, minimum width=2pt] (4) at (2+4,0) {};

        \node[circle, draw = black!100, fill=black!100, inner sep=2pt, minimum width=2pt] (5) at (0+8,0) {};
        \node[circle, draw = black!100, fill=black!100, inner sep=2pt, minimum width=2pt] (6) at (2+8,0) {};

        \node[circle, draw = black!100, fill=black!100, inner sep=2pt, minimum width=2pt] (7) at (0+12,0) {};
        \node[circle, draw = black!100, fill=black!00, inner sep=2pt, minimum width=2pt] (8) at (2+12,0) {};

	%---EDGES---	 
	\draw[-]    (1) -- (2) ;
        \draw[-]    (2) -- (3) ;
        \draw[dotted]    (3) -- (4) ;
        \draw[-]    (4) -- (5) ;
        \draw[decorate]    (5) -- (6) ;
        \draw[-]    (6) -- (7) ;
        \draw[-]    (7) -- (8) ;

        \draw[->]    (1) edge[bend left] (2) ;
        \draw[<->]    (2) edge[bend left] (3) ;
	  \draw[<->]    (4) edge[bend left] (5) ;
        % \draw[<-]    (5) edge[bend left] (6) ;
        \draw[<-]    (6) edge[bend left] (7) ;
        \draw[<-]    (7) edge[bend left] (8) ;
        
        \end{tikzpicture}
        \medskip
        
        \end{center}
        or
        \begin{center}
        \medskip
        
        \begin{tikzpicture}[thick,scale=0.5]
        \tikzset{decoration={snake,amplitude=.4mm,segment length=2mm, post length=0mm,pre length=0mm}}
	
	%---NODES---	 
        \node[circle, draw = black!100, fill=black!00, inner sep=2pt, minimum width=2pt] (1) at (0,0) {};
        \node[circle, draw = black!100, fill=black!100, inner sep=2pt, minimum width=2pt] (2) at (2,0) {};

        \node[circle, draw = black!100, fill=black!100, inner sep=2pt, minimum width=2pt] (3) at (0+4,0) {};
        \node[circle, draw = black!100, fill=black!100, inner sep=2pt, minimum width=2pt] (4) at (2+4,0) {};

        \node[circle, draw = black!100, fill=black!100, inner sep=2pt, minimum width=2pt] (5) at (0+8,0) {};
        \node[circle, draw = black!100, fill=black!100, inner sep=2pt, minimum width=2pt] (6) at (2+8,0) {};

        \node[circle, draw = black!100, fill=black!100, inner sep=2pt, minimum width=2pt] (7) at (0+12,0) {};
        \node[circle, draw = black!100, fill=black!00, inner sep=2pt, minimum width=2pt] (8) at (2+12,0) {};

	%---EDGES---	 
	\draw[-]    (1) -- (2) ;
        \draw[-]    (2) -- (3) ;
        \draw[dotted]    (3) -- (4) ;
        \draw[-]    (4) -- (5) ;
        \draw[<-]    (5) -- (6) ;
        \draw[-]    (6) -- (7) ;
        \draw[-]    (7) -- (8) ;

        \draw[->]    (1) edge[bend left] (2) ;
        \draw[<->]    (2) edge[bend left] (3) ;
	  \draw[<->]    (4) edge[bend left] (5) ;
        % \draw[<-]    (5) edge[bend left] (6) ;
        \draw[<-]    (6) edge[bend left] (7) ;
        \draw[<-]    (7) edge[bend left] (8) ;
        
        \end{tikzpicture}
        \medskip
        
        \end{center}
        or 
        \begin{center}
        \medskip
        
        \begin{tikzpicture}[thick,scale=0.5]
        \tikzset{decoration={snake,amplitude=.4mm,segment length=2mm, post length=0mm,pre length=0mm}}
	
	%---NODES---	 
        \node[circle, draw = black!100, fill=black!00, inner sep=2pt, minimum width=2pt] (1) at (0,0) {};
        \node[circle, draw = black!100, fill=black!100, inner sep=2pt, minimum width=2pt] (2) at (2,0) {};

        \node[circle, draw = black!100, fill=black!100, inner sep=2pt, minimum width=2pt] (3) at (0+4,0) {};
        \node[circle, draw = black!100, fill=black!100, inner sep=2pt, minimum width=2pt] (4) at (2+4,0) {};

        \node[circle, draw = black!100, fill=black!100, inner sep=2pt, minimum width=2pt] (5) at (0+8,0) {};
        \node[circle, draw = black!100, fill=black!100, inner sep=2pt, minimum width=2pt] (6) at (2+8,0) {};

        \node[circle, draw = black!100, fill=black!100, inner sep=2pt, minimum width=2pt] (7) at (0+12,0) {};
        \node[circle, draw = black!100, fill=black!00, inner sep=2pt, minimum width=2pt] (8) at (2+12,0) {};

	%---EDGES---	 
	\draw[decorate]    (1) -- (2) ;
        \draw[-]    (2) -- (3) ;
        \draw[dotted]    (3) -- (4) ;
        \draw[-]    (4) -- (5) ;
        \draw[-]    (5) -- (6) ;
        \draw[-]    (6) -- (7) ;
        \draw[-]    (7) -- (8) ;

        % \draw[->]    (1) edge[bend left] (2) ;
        \draw[<-]    (2) edge[bend left] (3) ;
	  \draw[<-]    (4) edge[bend left] (5) ;
        \draw[<-]    (5) edge[bend left] (6) ;
        \draw[<-]    (6) edge[bend left] (7) ;
        \draw[<-]    (7) edge[bend left] (8) ;
        
        \end{tikzpicture}
        \medskip
        
        \end{center}
        or
        \begin{center}
        \medskip
        
        \begin{tikzpicture}[thick,scale=0.5]
        \tikzset{decoration={snake,amplitude=.4mm,segment length=2mm, post length=0mm,pre length=0mm}}
	
	%---NODES---	 
        \node[circle, draw = black!100, fill=black!00, inner sep=2pt, minimum width=2pt] (1) at (0,0) {};
        \node[circle, draw = black!100, fill=black!100, inner sep=2pt, minimum width=2pt] (2) at (2,0) {};

        \node[circle, draw = black!100, fill=black!100, inner sep=2pt, minimum width=2pt] (3) at (0+4,0) {};
        \node[circle, draw = black!100, fill=black!100, inner sep=2pt, minimum width=2pt] (4) at (2+4,0) {};

        \node[circle, draw = black!100, fill=black!100, inner sep=2pt, minimum width=2pt] (5) at (0+8,0) {};
        \node[circle, draw = black!100, fill=black!100, inner sep=2pt, minimum width=2pt] (6) at (2+8,0) {};

        \node[circle, draw = black!100, fill=black!100, inner sep=2pt, minimum width=2pt] (7) at (0+12,0) {};
        \node[circle, draw = black!100, fill=black!00, inner sep=2pt, minimum width=2pt] (8) at (2+12,0) {};

	%---EDGES---	 
	\draw[-]    (1) -- (2) ;
        \draw[-]    (2) -- (3) ;
        \draw[dotted]    (3) -- (4) ;
        \draw[-]    (4) -- (5) ;
        \draw[-]    (5) -- (6) ;
        \draw[-]    (6) -- (7) ;
        \draw[-]    (7) -- (8) ;

        % \draw[->]    (1) edge[bend left] (2) ;
        \draw[<-]    (2) edge[bend left] (3) ;
	  \draw[<-]    (4) edge[bend left] (5) ;
        \draw[<-]    (5) edge[bend left] (6) ;
        \draw[<-]    (6) edge[bend left] (7) ;
        \draw[<-]    (7) edge[bend left] (8) ;
        
        \end{tikzpicture}.
        \medskip
        
        \end{center}
        That is, either (1) exactly one edge whose least vertex is a black node
        % endpoints are both black nodes 
        is either $\middlewave{0.5cm}$ or $\leftarrow$, all edges to the right of this edge are double with a $\leftarrow$ and all edges to the left of this edge are double with either arrow ($\leftarrow$ or $\rightarrow$), except for the first edge which must have $\rightarrow$, or (2) the leftmost edge is either $-$ or $\middlewave{0.5cm}$ and all edges to the right are double with a $\leftarrow$. 
        
        \item The induced subgraph of $G_S$ on nodes $\{i_k, i_k+1,\ldots, n+1\}$ is of the form
        \begin{center}
        \medskip
        
        \begin{tikzpicture}[thick,scale=0.5]
        \tikzset{decoration={snake,amplitude=.4mm,segment length=2mm, post length=0mm,pre length=0mm}}
	
	%---NODES---	 
        \node[circle, draw = black!100, fill=black!00, inner sep=2pt, minimum width=2pt] (1) at (0,0) {};
        \node[circle, draw = black!100, fill=black!100, inner sep=2pt, minimum width=2pt] (2) at (2,0) {};

        \node[circle, draw = black!100, fill=black!100, inner sep=2pt, minimum width=2pt] (3) at (0+4,0) {};
        \node[circle, draw = black!100, fill=black!100, inner sep=2pt, minimum width=2pt] (4) at (2+4,0) {};

        \node[circle, draw = black!100, fill=black!100, inner sep=2pt, minimum width=2pt] (5) at (0+8,0) {};
        \node[circle, draw = black!100, fill=black!100, inner sep=2pt, minimum width=2pt] (6) at (2+8,0) {};

        \node[circle, draw = black!100, fill=black!100, inner sep=2pt, minimum width=2pt] (7) at (0+12,0) {};
        \node[circle, draw = black!100, fill=black!100, inner sep=2pt, minimum width=2pt] (8) at (2+12,0) {};

	%---EDGES---	 
	\draw[-]    (1) -- (2) ;
        \draw[-]    (2) -- (3) ;
        \draw[dotted]    (3) -- (4) ;
        \draw[-]    (4) -- (5) ;
        \draw[-]    (5) -- (6) ;
        \draw[-]    (6) -- (7) ;
        \draw[-]    (7) -- (8) ;

        \draw[->]    (1) edge[bend left] (2) ;
        \draw[<->]    (2) edge[bend left] (3) ;
	  \draw[<->]    (4) edge[bend left] (5) ;
        \draw[<->]    (5) edge[bend left] (6) ;
        \draw[<->]    (6) edge[bend left] (7) ;
        \draw[<->]    (7) edge[bend left] (8) ;
        
        \end{tikzpicture}.
        \medskip
        
        \end{center}
        That is, all edges are double with either arrow (either $\leftarrow$ or $\rightarrow$), except for the first edge which must have a $\rightarrow$. 
    \end{enumerate}
\end{theorem}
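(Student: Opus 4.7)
The plan is to prove both directions of the characterization segment-by-segment, where a \emph{segment} means the induced subgraph of $G_S$ on $[i_1]$, on $\{i_j,\ldots,i_{j+1}\}$ for some $j\in[k-1]$, or on $\{i_k,\ldots,n+1\}$. For the forward direction, \Cref{prop: connected}, \Cref{lem: only double}, and \Cref{lem: facet means connected} provide the basic structure: the support graph of $S$ equals $I_n$, each edge of $G_S$ is either a single edge or one of the doubles $(-,\rightarrow)$ or $(-,\leftarrow)$, and each segment carries a prescribed number of doubles (all edges double in the first and last segments, exactly one single ``gap'' edge in each middle segment).

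For the first segment on $[i_1]$, the rightmost edge has $i_1=\circ$ on the right, so the fundamental obstruction $\rightarrow\circ$ from \eqref{eqn: fundamental subgraphs} forces it to be $(-,\leftarrow)$. If some earlier edge were $(-,\rightarrow)$, then the subpath from its left vertex to $i_1$, with $E_1$ the single right-pointing edge and $E_2$ the remaining edges of the doubles, would produce a partially directed path to the right ending in $\circ$ — a leading term of a zig-zag binomial and hence a forbidden subgraph. Thus all edges must be $(-,\leftarrow)$, yielding (1). The last segment is handled symmetrically: the leftmost edge must be $(-,\rightarrow)$ since $\circ\leftarrow$ is forbidden at $i_k=\circ$, and because no $\circ$ appears strictly to the right, no partially directed path obstruction can be created by the remaining doubles, which may therefore each be $(-,\rightarrow)$ or $(-,\leftarrow)$, giving (3).

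The main obstacle, and the core of the proof, is the analysis of middle segments, which I would split by the position of the unique gap edge. When the gap lies strictly to the right of the leftmost edge, the leftmost edge is a double with $i_j=\circ$ on its left and so must be $(-,\rightarrow)$ (else $\circ\leftarrow$); every edge strictly to the right of the gap must be $(-,\leftarrow)$, since any $(-,\rightarrow)$ there, together with the $-$'s on the intervening doubles, yields a partially directed path ending at $i_{j+1}=\circ$; and the gap itself must be $\middlewave{0.5cm}$ or single $\leftarrow$, because a $-$ or $\rightarrow$ gap would let the partially directed path run all the way from $i_j$ through the gap to $i_{j+1}$. The intermediate doubles to the left of the gap face no further restriction (neither $\circ\leftarrow$ nor $\rightarrow\circ$ applies, and no partially directed path terminates in this range), giving the first two diagrams of (2). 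When the gap is itself the leftmost edge, the analogous partially directed path argument forces every other edge to be $(-,\leftarrow)$, and the gap is then constrained to $-$ or $\middlewave{0.5cm}$: a single $\leftarrow$ would violate $\circ\leftarrow$ at $i_j$, while a single $\rightarrow$ would create a partially directed path from $i_j$ into $i_{j+1}$. This produces the remaining two diagrams of (2).

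For the reverse direction, a direct count gives
\[
|S| = k + 2(i_1-1) + \sum_{j=1}^{k-1}\bigl(2(i_{j+1}-i_j)-1\bigr) + 2(n+1-i_k) = 2n+1,
\]
so the candidate set has the correct cardinality. It then suffices to verify that no fundamental obstruction from \eqref{eqn: fundamental subgraphs} and no partially directed path to the right ending in $\circ$ occurs in $G_S$. The fundamental obstructions are precluded by the specific edge types prescribed in each diagram. For the zig-zag obstructions, the key observation is that both admissible gap types (single $\middlewave{0.5cm}$ and single $\leftarrow$) contain neither a $z_e$ nor a right-pointing $y_{\ell\ell+1}$ in $S$, and so block any partially directed path from crossing into the next $\circ$-endpoint; moreover, within the first segment no right-pointing arrow ever appears, and within the last segment no $\circ$ lies to the right of any $\rightarrow$. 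These checks rule out every forbidden subgraph and conclude that $S$ is a facet.
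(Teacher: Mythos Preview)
Your proof is correct and follows essentially the same approach as the paper's: both argue segment-by-segment, use \Cref{lem: facet means connected} to pin down the number of doubles per segment, then run the same case analysis on the position of the single ``gap'' edge, invoking the fundamental obstructions $\circ\leftarrow$, $\rightarrow\circ$ and the partially-directed-path obstruction at each step. The paper does the converse first and you do the forward direction first, but the content is the same.

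One small point in your reverse direction: when you check zig-zag obstructions you write that ``both admissible gap types (single $\middlewave{0.5cm}$ and single $\leftarrow$) contain neither a $z_e$ nor a right-pointing $y_{\ell\ell+1}$,'' which covers only the first two diagrams of~(2). For the last two diagrams (gap leftmost, type $-$ or $\middlewave{0.5cm}$) the gap may well be a $z_e$ and so does not block; what makes these segments safe is instead that they contain no $\rightarrow$ at all, just like the first segment $[i_1]$. You should say this explicitly (the paper's phrasing ``any partially directed path to the right is either interrupted by a single edge of the form $\leftarrow$ or $\middlewave{0.5cm}$, or it terminates in a black node'' is similarly terse, but the missing observation is the same). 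Once noted, every possible location of the required $\rightarrow$ is accounted for and the argument closes.
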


\begin{proof}
We first observe that any set $S$ such that $G_S$ satisfies the listed properties, is a facet. 
Notice first that any choice of the edges for each of the possible subgraphs does not contain an induced subgraph excluded by the fundamental binomials. 
Furthermore, any partially directed path to the right is either interrupted by a single edge of the form $\leftarrow$ or $\middlewave{0.5cm}$, or it terminates in a black node. 
Hence, such a $G_S$ also does not contain any subgraph forbidden by the leading terms of the zig-zag binomials.
Since there is exactly one double edge for every black node, it also follows that $|S| = 2n+1$.  
Since the dimension of $\mathcal{C}_{I_{n}}$ is $2n$, it follows that $G_S$ is a facet of the triangulation. 

Suppose now that $S$ is a facet of the triangulation, and consider its associated graph $G_S$.  
Since $S$ is a facet, we know $|S| = 2n+1$, and by \Cref{lem: facet means connected} we also know that $G_S$ is connected and any of the induced subgraphs on node sets $\{1,\ldots, i_1\}$, $\{i_k,\ldots, n+1\}$ and $\{i_j, i_j+1,\ldots, i_{j+1}\}$ for $j\in[k-1]$ contains as many black nodes as it does double edges.  It therefore suffices to show that these subgraphs of $G_S$ are of  one of the possible forms specified in the above list. 

Consider first the induced subgraph of $G_S$ on node set $[i_1]$. 
Since $S$ is a facet, by Lemma~\ref{lem: facet means connected}, we know that every edge in this subgraph is a double edge, and hence of the form $\{(i,i+1), ii+1\}$ or $\{(i+1,i), ii+1\}$. %$\{i\rightarrow (i+1), i - (i+1)\}$ or $\{i\leftarrow (i+1), i - (i+1)\}$. 
Since $S$ cannot contain the leading term of any fundamental binomial, it does not contain both $z_{i_1}$ and $y_{i_1-1i_1}$.  
Hence, this subgraph must contain the double edge $\{(i_1,i_1-1), i_1-1i_1\}$. %$\{(i_1-1)\leftarrow i_1, (i_1-1) -  i_1\}$.
Similarly, since $S$ cannot contain the leading term of any zig-zag binomial, this subgraph cannot contain any partially directed paths to the right.  
It follows that all double edges in this subgraph are of the form $\{(i+1,i), ii+1\}$. %$\{i\leftarrow (i+1), i - (i+1)\}$.
Hence, $G_S$ fulfills the first criterion in the above list.

Similarly, for the induced graph of $G_S$ on node set $\{i_j, i_j+1,\ldots, i_{j+1}\}$, we know that the graph must be connected and contain exactly $i_{j+1} - i_j -1$ double edges by Lemma~\ref{lem: facet means connected}. 
Hence, there is exactly one single edge in the graph. 
Suppose that this edge is the leftmost edge (i.e., between $i_j$ and $i_{j}+1$). 
In this case, the edge may be either $\middlewave{0.5cm}$ or $-$, but not $\leftarrow$ or $\rightarrow$.
To see that it cannot be $\leftarrow$, note that this would mean that the leading term of a fundamental binomial is contained in $S$.  
To see that it cannot be $\rightarrow$, note that, since all remaining edges in the subgraph must be doubled (and hence include a $-$), it would follow that $S$ contains the leading term of a zig-zag binomial, which is a contradiction. 
In a similar fashion, all double edges must be of the form $\{(i+1,i), ii+1\}$. %$\{i\leftarrow (i+1), i - (i+1)\}$.
Otherwise $S$ would contain the leading term of a zig-zag binomial. 

Suppose now that the single edge in the subgraph is between $i_j+t$ and $i_j+{t+1}$ for some $t>1$.  
By the same argument as the previous case, all remaining edges must be double edges and all double edges to the right of $i_j+{t+1}$ must be of the form $\{(i+1,i), ii+1\}$.  %$\{i\leftarrow (i+1), i - (i+1)\}$. 
We must also have that the double edge between $i_j$ and $i_j+1$ is of the form $\{(i_j,i_j+1), i_ji_j+1\}$, %$\{i_j\rightarrow (i_j+1), i_j - (i_j+1)\}$, 
since otherwise $S$ would contain the leading term of a fundamental binomial. 
However, all double edges between $i_j+s$ and $i_j+s+1$ for $1\leq s< t$ can be of either form $\{(i+1,i), ii+1\}$ or $\{(i,i+1), ii+1\}$, %$\{i\rightarrow (i+1), i - (i+1)\}$,
since the single edge will interrupt any partially directed path to the right. 
Observe further that the single edge must be of the form $\leftarrow$ or $\middlewave{0.5cm}$, since any other option would combine with the undirected edges and the directed edge between $i_j$ and $i_{j}+1$ to yield a partially directed path to the right terminating in a $\circ$. 
It follows that if $S$ is a facet, the corresponding induced subgraphs of $G_S$ on the intervals $\{i_j,i_j+1,\ldots, i_{j+1}\}$ for all $j\in[k-1]$ are of the form in item (2) in the above list. 

Finally, for the induced subgraph of $G_S$ on node set $\{i_k,\ldots, n+1\}$, we know from Lemma~\ref{lem: facet means connected} that all edges are double edges and hence of the form $\{(i+1,i), ii+1\}$ or $\{(i,i+1), ii+1\}$.  
To avoid a subgraph forbidden by a fundamental binomial, we must also have that the double edge between $i_k$ and $i_k+1$ is of the form $\{(i,i+1), ii+1\}$.  
However, since the path does not contain any $\circ$ to the right of node $i_k$, we are free to choose the direction of the arrow in all remaining double edges.  
Hence, this subgraph is of the form given in item (3) in the above list, which completes the proof.
\end{proof}

\begin{example}
\label{ex: facets of I_3}
According to Theorem~\ref{lem: path tri facets}, the facets of the triangulation of $\mathcal{C}_{I_2}$ are given by the following sixteen graphs:
\begin{center}
        \medskip
        
        \begin{tikzpicture}[thick,scale=0.5]
        \tikzset{decoration={snake,amplitude=.4mm,segment length=2mm, post length=0mm,pre length=0mm}}
	
	%---NODES---	 
        \node[circle, draw = black!100, fill=black!00, inner sep=2pt, minimum width=2pt] (1) at (0,0) {};
        \node[circle, draw = black!100, fill=black!00, inner sep=2pt, minimum width=2pt] (2) at (2,0) {};
        \node[circle, draw = black!100, fill=black!00, inner sep=2pt, minimum width=2pt] (3) at (4,0) {};

        \node[circle, draw = black!100, fill=black!00, inner sep=2pt, minimum width=2pt] (4) at (0+8,0) {};
        \node[circle, draw = black!100, fill=black!00, inner sep=2pt, minimum width=2pt] (5) at (2+8,0) {};
        \node[circle, draw = black!100, fill=black!00, inner sep=2pt, minimum width=2pt] (6) at (4+8,0) {};

        \node[circle, draw = black!100, fill=black!00, inner sep=2pt, minimum width=2pt] (7) at (0+16,0) {};
        \node[circle, draw = black!100, fill=black!00, inner sep=2pt, minimum width=2pt] (8) at (2+16,0) {};
        \node[circle, draw = black!100, fill=black!00, inner sep=2pt, minimum width=2pt] (9) at (4+16,0) {};

        \node[circle, draw = black!100, fill=black!00, inner sep=2pt, minimum width=2pt] (10) at (0+24,0) {};
        \node[circle, draw = black!100, fill=black!00, inner sep=2pt, minimum width=2pt] (11) at (2+24,0) {};
        \node[circle, draw = black!100, fill=black!00, inner sep=2pt, minimum width=2pt] (12) at (4+24,0) {};

	%---EDGES---	 
	\draw[-]    (1) -- (2) ;
        \draw[-]    (2) -- (3) ;

        \draw[decorate]    (4) -- (5) ;
        \draw[-]    (5) -- (6) ;

        \draw[-]    (7) -- (8) ;
        \draw[decorate]    (8) -- (9) ;

        \draw[decorate]    (10) -- (11) ;
        \draw[decorate]    (11) -- (12) ;

        % \draw[->]    (1) edge[bend left] (2) ;
        % \draw[<->]    (2) edge[bend left] (3) ;
        
        \end{tikzpicture}
        \medskip
        
        \end{center}

\begin{center}
        \medskip
        
        \begin{tikzpicture}[thick,scale=0.5]
        \tikzset{decoration={snake,amplitude=.4mm,segment length=2mm, post length=0mm,pre length=0mm}}
	
	%---NODES---	 
        \node[circle, draw = black!100, fill=black!100, inner sep=2pt, minimum width=2pt] (1) at (0,0) {};
        \node[circle, draw = black!100, fill=black!00, inner sep=2pt, minimum width=2pt] (2) at (2,0) {};
        \node[circle, draw = black!100, fill=black!00, inner sep=2pt, minimum width=2pt] (3) at (4,0) {};

        \node[circle, draw = black!100, fill=black!100, inner sep=2pt, minimum width=2pt] (4) at (0+8,0) {};
        \node[circle, draw = black!100, fill=black!00, inner sep=2pt, minimum width=2pt] (5) at (2+8,0) {};
        \node[circle, draw = black!100, fill=black!00, inner sep=2pt, minimum width=2pt] (6) at (4+8,0) {};

        \node[circle, draw = black!100, fill=black!00, inner sep=2pt, minimum width=2pt] (7) at (0+16,0) {};
        \node[circle, draw = black!100, fill=black!00, inner sep=2pt, minimum width=2pt] (8) at (2+16,0) {};
        \node[circle, draw = black!100, fill=black!100, inner sep=2pt, minimum width=2pt] (9) at (4+16,0) {};

        \node[circle, draw = black!100, fill=black!00, inner sep=2pt, minimum width=2pt] (10) at (0+24,0) {};
        \node[circle, draw = black!100, fill=black!00, inner sep=2pt, minimum width=2pt] (11) at (2+24,0) {};
        \node[circle, draw = black!100, fill=black!100, inner sep=2pt, minimum width=2pt] (12) at (4+24,0) {};

	%---EDGES---	 
	\draw[-]    (1) -- (2) ;
        \draw[-]    (2) -- (3) ;
        \draw[<-]    (1) edge[bend left] (2) ;

        \draw[-]    (4) -- (5) ;
        \draw[decorate]    (5) -- (6) ;
        \draw[<-]    (4) edge[bend left] (5) ;

        \draw[-]    (7) -- (8) ;
        \draw[-]    (8) -- (9) ;
        \draw[->]    (8) edge[bend left] (9) ;

        \draw[decorate]    (10) -- (11) ;
        \draw[-]    (11) -- (12) ;
        \draw[->]    (11) edge[bend left] (12) ;

        % \draw[->]    (1) edge[bend left] (2) ;
        % \draw[<->]    (2) edge[bend left] (3) ;
        
        \end{tikzpicture}
        \medskip
        
        \end{center}

\begin{center}
        \medskip
        
        \begin{tikzpicture}[thick,scale=0.5]
        \tikzset{decoration={snake,amplitude=.4mm,segment length=2mm, post length=0mm,pre length=0mm}}
	
	%---NODES---	 
        \node[circle, draw = black!100, fill=black!00, inner sep=2pt, minimum width=2pt] (1) at (0,0) {};
        \node[circle, draw = black!100, fill=black!100, inner sep=2pt, minimum width=2pt] (2) at (2,0) {};
        \node[circle, draw = black!100, fill=black!00, inner sep=2pt, minimum width=2pt] (3) at (4,0) {};

        \node[circle, draw = black!100, fill=black!00, inner sep=2pt, minimum width=2pt] (4) at (0+8,0) {};
        \node[circle, draw = black!100, fill=black!100, inner sep=2pt, minimum width=2pt] (5) at (2+8,0) {};
        \node[circle, draw = black!100, fill=black!00, inner sep=2pt, minimum width=2pt] (6) at (4+8,0) {};

        \node[circle, draw = black!100, fill=black!00, inner sep=2pt, minimum width=2pt] (7) at (0+16,0) {};
        \node[circle, draw = black!100, fill=black!100, inner sep=2pt, minimum width=2pt] (8) at (2+16,0) {};
        \node[circle, draw = black!100, fill=black!00, inner sep=2pt, minimum width=2pt] (9) at (4+16,0) {};

        \node[circle, draw = black!100, fill=black!00, inner sep=2pt, minimum width=2pt] (10) at (0+24,0) {};
        \node[circle, draw = black!100, fill=black!100, inner sep=2pt, minimum width=2pt] (11) at (2+24,0) {};
        \node[circle, draw = black!100, fill=black!00, inner sep=2pt, minimum width=2pt] (12) at (4+24,0) {};

	%---EDGES---	 
	\draw[-]    (1) -- (2) ;
        \draw[<-]    (2) -- (3) ;
        \draw[->]    (1) edge[bend left] (2) ;

        \draw[-]    (4) -- (5) ;
        \draw[decorate]    (5) -- (6) ;
        \draw[->]    (4) edge[bend left] (5) ;

        \draw[-]    (7) -- (8) ;
        \draw[-]    (8) -- (9) ;
        \draw[<-]    (8) edge[bend left] (9) ;

        \draw[decorate]    (10) -- (11) ;
        \draw[-]    (11) -- (12) ;
        \draw[<-]    (11) edge[bend left] (12) ;

        % \draw[->]    (1) edge[bend left] (2) ;
        % \draw[<->]    (2) edge[bend left] (3) ;
        
        \end{tikzpicture}
        \medskip
        
        \end{center}

\begin{center}
        \medskip
        
        \begin{tikzpicture}[thick,scale=0.5]
        \tikzset{decoration={snake,amplitude=.4mm,segment length=2mm, post length=0mm,pre length=0mm}}
	
	%---NODES---	 
        \node[circle, draw = black!100, fill=black!100, inner sep=2pt, minimum width=2pt] (1) at (0,0) {};
        \node[circle, draw = black!100, fill=black!100, inner sep=2pt, minimum width=2pt] (2) at (2,0) {};
        \node[circle, draw = black!100, fill=black!00, inner sep=2pt, minimum width=2pt] (3) at (4,0) {};

        \node[circle, draw = black!100, fill=black!100, inner sep=2pt, minimum width=2pt] (4) at (0+8,0) {};
        \node[circle, draw = black!100, fill=black!00, inner sep=2pt, minimum width=2pt] (5) at (2+8,0) {};
        \node[circle, draw = black!100, fill=black!100, inner sep=2pt, minimum width=2pt] (6) at (4+8,0) {};

        \node[circle, draw = black!100, fill=black!00, inner sep=2pt, minimum width=2pt] (7) at (0+16,0) {};
        \node[circle, draw = black!100, fill=black!100, inner sep=2pt, minimum width=2pt] (8) at (2+16,0) {};
        \node[circle, draw = black!100, fill=black!100, inner sep=2pt, minimum width=2pt] (9) at (4+16,0) {};

        \node[circle, draw = black!100, fill=black!00, inner sep=2pt, minimum width=2pt] (10) at (0+24,0) {};
        \node[circle, draw = black!100, fill=black!100, inner sep=2pt, minimum width=2pt] (11) at (2+24,0) {};
        \node[circle, draw = black!100, fill=black!100, inner sep=2pt, minimum width=2pt] (12) at (4+24,0) {};

	%---EDGES---	 
	\draw[-]    (1) -- (2) ;
        \draw[-]    (2) -- (3) ;
        \draw[<-]    (1) edge[bend left] (2) ;
        \draw[<-]    (2) edge[bend left] (3) ;

        \draw[-]    (4) -- (5) ;
        \draw[-]    (5) -- (6) ;
        \draw[<-]    (4) edge[bend left] (5) ;
        \draw[->]    (5) edge[bend left] (6) ;

        \draw[-]    (7) -- (8) ;
        \draw[-]    (8) -- (9) ;
        \draw[->]    (7) edge[bend left] (8) ;
        \draw[->]    (8) edge[bend left] (9) ;

        \draw[-]    (10) -- (11) ;
        \draw[-]    (11) -- (12) ;
        \draw[->]    (10) edge[bend left] (11) ;
        \draw[<-]    (11) edge[bend left] (12) ;

        % \draw[->]    (1) edge[bend left] (2) ;
        % \draw[<->]    (2) edge[bend left] (3) ;
        
        \end{tikzpicture}
        \medskip
        
        \end{center}

        Each of these graphs encodes the collection of vertices of the corresponding facet in the triangulation of $\mathcal{C}_{I_{2}}$, from which we can recover the facet-defining equations of the simplex and thereby compute the canonical form $\Omega_{I_2}$. 
\end{example}

Since the triangulation is unimodular, it follows that the normalized volume of $\mathcal{C}_{I_{n}}$ is given by the sum over all graphs $G_S$ that satisfy the properties listed in Lemma~\ref{lem: path tri facets}. 
Using the decomposition of these properties into subgraphs, we can recover the formula for the normalized volume of $\mathcal{C}_{I_{n}}$ given in \cite{kuhne2022faces}. 

\begin{corollary}
\label{cor: volume path}
The normalized volume of $\mathcal{C}_{I_{n}}$ is $4^n$.
\end{corollary}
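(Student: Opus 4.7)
The plan is to count the facets of the triangulation from \Cref{lem: path tri facets} directly; since this triangulation is unimodular, its number of facets equals $\textnormal{Vol}(\mathcal{C}_{I_n})$. The facets decompose by the set of white vertices $\mathfrak{Z}_S = \{i_1 < \cdots < i_w\} \subseteq [n+1]$, and for each such choice \Cref{lem: path tri facets} splits $G_S$ into three kinds of segments, whose configuration counts multiply.

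First, I would establish the per-segment counts. The leftmost segment on $[i_1]$ has a unique admissible configuration by part (1). The rightmost segment on $\{i_w, \dots, n+1\}$ of length $r = n+1 - i_w$ admits $2^{r-1}$ configurations when $r \geq 1$ (part (3): the leftmost double must be a $\rightarrow$, the remaining $r-1$ doubles allow either arrow) and $1$ when $r = 0$. The middle segments on $\{i_j, \dots, i_{j+1}\}$ of length $m_j$ are the main case: I would verify that the two options in part (2) together contribute exactly $2^{m_j}$ configurations. For $m_j = 1$, only option~(2) applies, giving $2$ choices for the single edge. For $m_j \geq 2$, option~(2) contributes $2$, and option~(1) with the single edge at internal position $t \in \{2, \dots, m_j\}$ contributes $2 \cdot 2^{t-2} = 2^{t-1}$ (two single-edge types times $2^{t-2}$ admissible arrow patterns on the doubles to its left), summing to $2 + \sum_{t=2}^{m_j} 2^{t-1} = 2^{m_j}$.

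Second, I would perform the summation. Fix $a_0 = i_1 - 1$ and observe that, for any color pattern on $\{a_0 + 2, \dots, n+1\}$, the total contribution depends only on whether vertex $n+1$ is white or black: when $n+1$ is white the product equals $2^{m_1 + \cdots + m_{w-1}} = 2^{n - a_0}$, and when $n+1$ is black it equals $2^{m_1 + \cdots + m_{w-1}} \cdot 2^{r - 1} = 2^{n - a_0 - 1}$. Since there are $2^{n - a_0 - 1}$ color patterns of each type on $\{a_0 + 2, \dots, n+1\}$, the total count for a given $a_0 \in \{0, \dots, n-1\}$ is $2^{n - a_0 - 1}\bigl(2^{n - a_0} + 2^{n - a_0 - 1}\bigr) = 3 \cdot 4^{n - a_0 - 1}$. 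Summing over $a_0$ yields $\sum_{a_0 = 0}^{n-1} 3 \cdot 4^{n - a_0 - 1} = 4^n - 1$, and the boundary case $a_0 = n$ (a single white vertex at $n+1$) contributes $1$, giving $4^n$.

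The only genuine obstacle is the middle-segment count $2^{m_j}$ via the case analysis on the four shapes of part (2); after that, everything reduces to arithmetic and the telescoping identity $\sum_j m_j + r = n - a_0$.
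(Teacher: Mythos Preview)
Your proposal is correct and follows essentially the same approach as the paper: both count facets by first fixing $\mathfrak{Z}_S$, computing the per-segment configuration counts from \Cref{lem: path tri facets} (in particular deriving $2^{m_j}$ for each middle segment via the same case split on the position of the single edge), and then summing. The only difference is cosmetic: the paper organizes the final sum by $\min(Z_S)$ and whether $n+1\in Z_S$, arriving at $3\sum_{\ell=0}^{n-1}4^{\ell}+1=4^n$, while you group by $a_0=i_1-1$ and the color of $n+1$, arriving at the identical expression $\sum_{a_0=0}^{n-1}3\cdot 4^{n-a_0-1}+1=4^n$.
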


\begin{proof}
We first deduce a formula for the normalized volume of $\mathcal{C}_{I_{n}}$ by enumerating the facets of the triangulation using Lemma~\ref{lem: path tri facets}.  Then we show that this formula reduces to $4^n$. 

To enumerate the facets via Lemma~\ref{lem: path tri facets}, we first pick a subset $\{i_1,\ldots, i_k\}$ of $[n+1]$ where we assume $i_1 < \cdots < i_k$.  Let $S$ be a facet with $Z_S=\{z_{i_1},\ldots,z_{i_k}\}$.
There is only one possible induced subgraph on node set $[i_1]$. 
The possible number of induced subgraphs of $G_S$ on node set $\{i_k, i_k+1,\ldots, n+1\}$ is the following
\[
\begin{cases}
1,   &   \mbox{ if } i_k = n+1,\\
2^{n-i_k},   &   \mbox{ if } i_k < n+1.
\end{cases}
\]
Given an interval $\{i_j, i_j+1,\ldots, i_{j+1}\}$, the possible induced subgraphs on this interval of $G_S$ must contain exactly one single edge between $i_j +s$ and $i_j + s + 1$ for $s\in\{0,1,\ldots, i_{j+1} - i_j -1\}$. 
By Lemma~\ref{lem: path tri facets} we always have two choices for this edge. 
Further, by the same lemma, when $s = 0$, there are exactly two possible subgraphs of $G_S$ on this interval. 
For $s > 0$, there are $2^{s}$ choices (including the choice of edge type for the single edge). 
Hence, there are a total of
\begin{equation*}
\begin{split}
2 + \sum_{s = 1}^{i_{j+1}-i_j -1}2^s
&= 1 + \sum_{s = 0}^{i_{j+1}-i_j -1}2^s,\\
&= 1 + (2^{i_{j+1} - i_j}-1),\\
&= 2^{i_{j+1} - i_j}
\end{split}
\end{equation*}
possible subgraphs for this interval.  
% In the case that $i_k = n+1$ there is only one option for the subgraph of $G_S$ on $\{i_k,i_k +1,\ldots, n+1\}$.  
% If $i_k < n+1$ there are $2^{n- i_k}$ possible subgraphs.
The number of facets $S$ of the triangulation with $Z_S = \{i_1,\ldots, i_k\}$ is then equal to 
\[
\begin{cases}
1\cdot \left(\prod_{j=1}^{k-1}2^{i_{j+1} - i_j}\right) \cdot 1,    & \mbox{ if } i_k = n+1,\\
1\cdot \left(\prod_{j=1}^{k-1}2^{i_{j+1} - i_j}\right) \cdot 2^{n-i_k}, & \mbox{ if } i_k < n+1
\end{cases}
= 
\begin{cases}
2^{i_{k} - i_1},   & \mbox{ if } i_k = n+1,\\
2^{n-i_1}, & \mbox{ if } i_k < n+1.
\end{cases}
\]
Summing over all proper subsets of $[n+1]$, yields
\begin{align*}
\sum_{\emptyset\neq Z\in 2^{[n+1]}}2^{n-\min(Z)}+\sum_{Z\in 2^{[n]}} 2^{n+1-\min(Z)}&=\sum_{\ell=1}^n2^{n-\ell}\cdot 2^{n-\ell}+\sum_{\ell=1}^n2^{n-\ell}\cdot 2^{n+1-\ell}+1\\
&=\sum_{\ell=0}^{n-1}4^\ell+2\sum_{\ell=0}^{n-1}4^\ell+1\\
&=3\sum_{\ell=0}^{n-1}4^\ell+1\\
&=3\frac{4^n-1}{4-1}+1=4^n,
\end{align*}
which completes the proof.
\end{proof}

% \textcolor{red}{L: The interpretation of the facets in Lemma~\ref{lem: path tri facets} should be useful in producing a shelling that yields the $h^\ast$-polynomial formula.  Still need to sort out the details...}

% \bigskip

% \textcolor{red}{L: This should all generalize to trees in a pretty straightforward (but perhaps obnoxiously technical) way...}

% \bigskip

% \textcolor{red}{L: If we also use these graphs in the cycle case (including some forbidden subgraphs for the cycle binomial leading terms then we should be able to do similar stuff for that polytope as well... may be more complicated however, since the term order can't be specified by rooting the underlying tree anymore... Maybe the graphs still help in some way tho?}

\section{The Cosmological Polytope of the Cycle}
\label{sec: the cycle}
We now consider the cosmological polytope $\mathcal{C}_{C_n}$ associated with the $n$-cycle $C_n$; i.e., the graph with vertex set $V = [n]$ and edge set $E=\{ii+1 ~:~ i\in [n]\}$, where $i+1$ is considered modulo $n$.
Via a mild extension of the observations made in Section~\ref{sec: the path}, we can characterize the facets of a regular unimodular triangulation of $\mathcal{C}_{C_n}$ arising from a good term order. 
This yields a method for computing the canonical form $\Omega_{C_n}$. 
Furthermore, we can enumerate these facets, yielding a closed formula for the normalized volume of $\mathcal{C}_{C_n}$, which was previously unknown. 

We use the notation introduced in \Cref{sec: the path}. 
In particular, we represent sets of variables $S$ in $R_{C_n}$ with the graphs $G_S$.  
For the edge $e=ii+1$ we  also write $y_{ii+1}$ and $y_{i+1i}$ for the corresponding $y$-variables. 
% Our goal is to compute the normalized volume of $\mathcal{C}_{C_n}$. 
% We use the notations introduced in \Cref{sec: the path} and in particular, we will represent variables (and facets) by (collections of) symbols. 
Just as in Section~\ref{sec: the path}, we consider the triangulation $\mathcal{T}$ of $\mathcal{C}_{C_n}$ induced by a lexicographic term order with respect to the following ordering of the variables 
\begin{align}
    \label{eq: order cycle}
    &y_{12}>y_{23}>\cdots y_{n-1n}>y_{n1}>y_{1n}>y_{nn-1}>\cdots>y_{21} > z_{12} > \cdots \\
    &\cdots> z_{n-1n} > t_{12} > \cdots > t_{n-1n} > z_1 > \cdots > z_n \nonumber.
\end{align}
This term order is seen to be a good term order (see \Cref{def : good}).

With respect to such a term order, the leading terms of zig-zag binomials correspond again to partially directed paths ending in a $\circ$, just as in Section~\ref{sec: the path}.  
Note that this is indeed true even more general for facets of the cosmological polytope of any graph for any induced path (whose internal vertices have degree $2$) with respect to any good term order for which variables corresponding to one direction of the path are greater than the ones for the other direction. 

We must now also avoid the subgraphs corresponding to leading terms of cyclic binomials. 
For cycle binomials, this implies that we must avoid subgraphs that are directed cycles (both clockwise and counter-clockwise), such as
\begin{center}
\begin{tikzpicture}[->,thick,scale=.7]
   \node (i) at (90:1cm)  {$1$};
   \node (j) at (-30:1cm) {$2$};
   \node (k) at (210:1cm) {$3$};

   \draw (70:1cm)  arc (70:-10:1cm);
   \draw (-50:1cm) arc (-50:-130:1cm);
   \draw (190:1cm) arc (190:110:1cm);
\end{tikzpicture}
\hspace{1in}
\begin{tikzpicture}[->,thick,scale=.7]
   \node (i1) at (90:1cm)  {$1$};
   \node (j1) at (-30:1cm) {$2$};
   \node (k1) at (210:1cm) {$3$};

   \draw[<-] (70:1cm)  arc (70:-10:1cm);
   \draw[<-] (-50:1cm) arc (-50:-130:1cm);
   \draw[<-] (190:1cm) arc (190:110:1cm);
\end{tikzpicture}.
\end{center}
For cyclic binomials that are not cycle binomials, since $y_{ii+1} > y_{j+1j}$ for every $i,j\in[n]$ (with addition taken modulo $n$), the leading terms will always correspond to partially directed cycles oriented clockwise, such as
\begin{center}
\begin{tikzpicture}[->,thick,scale=.6]
   \node (1) at (90:3cm)  {$1$};
   \node (3) at (30:3cm) {$2$};
   \node (3) at (-30:3cm) {$3$};
   \node (4) at (-90:3cm) {$4$};
   \node (5) at (-150:3cm) {$5$};
   \node (6) at (-210:3cm) {$6$};

   \draw[-] (85:3cm)  arc (85:35:3cm);
   \draw (25:3cm) arc (25:-25:3cm);
   \draw (-35:3cm) arc (-35:-85:3cm);
   \draw[-] (-95:3cm) arc (-95:-145:3cm);
   \draw (-155:3cm) arc (-155:-205:3cm);
   \draw[-] (-215:3cm) arc (-215:-265:3cm);
\end{tikzpicture}.
\end{center}
Hence, we must avoid subgraphs that are partially directed cycles with a clockwise orientation.
The following theorem provides a characterization of the facets of this triangulation in terms of these forbidden subgraphs.

\begin{theorem}
\label{thm: cycle facets}
Let $S$ be a subset of the generators of the ring $R_{C_{n}}$ and let $Z_S = \{z_{i_1},\ldots, z_{i_k}\}$ where $i_1<\cdots < i_k$.  
Then $S$ is a facet of the triangulation of $\mathcal{C}_{C_{n}}$ corresponding to the lexicographic order induced by \eqref{eq: order cycle} if and only if all of the following hold:
\begin{enumerate}
    \item $Z_S\neq \emptyset$,
    \item the induced subgraph of $G_S$ on $\{i_t, i_t+1,\ldots, i_{t+1}\}$ is of the form described in Theorem~\ref{lem: path tri facets}~(2) for all $t\in[k-1]$, and
    \item the induced subgraph of $G_S$ on $\{i_k, i_k+1 \mod n,\ldots, i_{1}\}$ is of the form described in Theorem~\ref{lem: path tri facets}~(2) where the right-most node is $i_1$.
\end{enumerate}
\end{theorem}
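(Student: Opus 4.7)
The strategy is to mirror the proof of Theorem~\ref{lem: path tri facets}, exploiting that the cycle has no boundary, so every arc between consecutive black nodes plays the role of an interior segment of a path in the sense of Theorem~\ref{lem: path tri facets}~(2). For the forward direction, assume $S$ satisfies conditions (1)--(3) and verify $|S|=2n$ by counting: each arc of length $\ell$ contributes one single edge and $\ell-1$ double edges, so the edge symbols total $2n-k$ on top of the $k$ symbols in $Z_S$. The local fundamental and zig-zag checks from the path proof apply verbatim within each arc. Cyclic obstructions require a new argument: in the three forms of Theorem~\ref{lem: path tri facets}~(2) that contain a blocking single edge (of type $\middlewave{0.5cm}$ or $\leftarrow$, for which neither $y_{ii+1}$ nor $z_e$ lies in $S$), no clockwise partial cyclic leading term can sit inside $S$; in the remaining form (leftmost edge $-$, rest double $\leftarrow$) no edge of the arc contributes $y_{ii+1}$ at all, so the only candidate leading term is the cycle binomial $\prod_E y_{i+1i}$, which is avoided because the leftmost $-$ edge carries no $y$-variable.

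For the reverse direction, I first show $Z_S\neq\emptyset$: otherwise $|S|=2n$ forces all $n$ edges of $C_n$ to be doubled, giving an orientation of every cycle edge, and the corresponding cyclic (or cycle) binomial has its leading term in $S$, contradicting that $S$ is a facet. Writing $Z_S=\{i_1<\cdots<i_k\}$ and decomposing the cycle into $k$ arcs, the dimension count analogous to Lemma~\ref{lem: facet means connected} gives $n-k$ doubles and $k$ singles in $G_S$. To see that each arc has at least one single edge, and thus exactly one by pigeonhole, I assume some arc is fully double: fundamental obstructions at $z_{i_j}$ and $z_{i_{j+1}}$ force its leftmost edge to be $\rightarrow$ and its rightmost to be $\leftarrow$, and then the zig-zag pair with $E_1=\{\text{leftmost edge}\}$ and $E_2$ the remaining edges produces the leading term $z_{i_{j+1}}y_{i_j(i_j+1)}\prod_{e\in E_2}z_e \in S$, a contradiction. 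With exactly one single edge per arc established, the fundamental and zig-zag analysis from the middle-segment case of Theorem~\ref{lem: path tri facets}~(2) is entirely local and forces each arc into one of the four forms.

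The main obstacle is confirming that zig-zag sub-paths crossing multiple arcs impose no further constraints on the per-arc structure. My plan is to argue by dichotomy: any cross-arc sub-path ending at some black $i_{j+1}$ must fully traverse at least one intermediate arc, and either that arc contains a blocking single edge (so the sub-path is blocked at that edge), or it has leftmost edge $-$ with all other edges double $\leftarrow$, in which case no edge of the sub-path lying in that arc admits $y_{ii+1}\in S$, making $E_1\neq\emptyset$ impossible. Either way the corresponding zig-zag leading term fails to lie in $S$, so the local per-arc analysis suffices to characterize the facets.
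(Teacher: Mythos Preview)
Your approach matches the paper's: reduce each arc between consecutive white nodes to the interior-segment analysis of Theorem~\ref{lem: path tri facets}(2), then check separately that the cyclic leading terms are excluded. The paper's own proof is much terser (it simply defers to the path argument and adds one sentence on the single edges blocking clockwise partially directed cycles), whereas you attempt to spell out the cross-arc zig-zag analysis explicitly. That extra care is welcome, but the execution has a gap.

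In your final paragraph you claim that if the fully traversed arc is of the fourth form then ``no edge of the sub-path lying in that arc admits $y_{ii+1}\in S$, making $E_1\neq\emptyset$ impossible.'' This does not follow: a cross-arc path by definition has edges outside that arc, and those may well carry $y_{ii+1}$, so $E_1$ can still be nonempty. The clean fix is to argue edge-wise rather than arc-wise: any edge $e$ with $y_{ii+1}\in S$ must lie in an arc of the first or second form, and the single edge of that same arc (being $\middlewave{0.5cm}$ or $\leftarrow$, hence carrying neither $y_{ii+1}$ nor $z_e$) sits strictly clockwise of $e$ and strictly before the next white node. Consequently any zig-zag path through $e$ terminating clockwise at a white node must contain this blocker, so its leading term cannot lie in $S$. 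This is exactly what the paper means by ``follows from the arguments given in the proof of Theorem~\ref{lem: path tri facets}.'' Two smaller slips: you write ``black $i_{j+1}$'' where the $i_j$ are by definition the white nodes; and your exclusion of the counter-clockwise cycle binomial $\prod_e y_{i+1\,i}$ is stated only for the fourth form, though in form~(2) the single $\leftarrow$ edge does carry $y_{i+1\,i}$ --- there it is the leftmost double edge (with $\rightarrow$) that lacks $y_{i+1\,i}$ and does the blocking.
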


\begin{proof}
Suppose that $S$ is a facet of $\mathcal{T}$.  
Then $|S| = 2n$. 
If $Z_S = \emptyset$, then by the forbidden subgraphs arising from the fundamental binomials, we know that every edge in $G_S$ is a double edge consisting of an undirected edge together with a directed edge. 
This, however, would imply that $G_S$ contains a subgraph corresponding to the leading term of a cyclic  binomial, which is a contradiction. 
Hence, $Z_S\neq \emptyset$. 
The fact that conditions (2) and (3) hold follows from the specified variable ordering and the arguments given in the proof of Theorem~\ref{lem: path tri facets}.

Similarly, the converse follows from the arguments given in the proof of Theorem~\ref{lem: path tri facets}, with the additional observation that the specified paths between any two white nodes contains a single edge and these single edges prevent the existence of clockwise partially directed and directed cycles. 
\end{proof}

% By the same arguments as in \Cref{sec: the path} it follows that, if $S$ is a facet whose set of white vertices is $\{i_1<i_2<\cdots <i_k\}$, then the induced subgraph of $G_S$ on $\{i_\ell,\ldots,i_{\ell+1 \mod k}\}$ for $1\leq \ell\leq k$ is necessarily of the form described in \Cref{lem: path tri facets}~(2). 

% Note that this is indeed true even more general for facets of the cosmological polytope of any graph for any induced path (whose internal vertices have degree $2$) and any good order for which variables corresponding to one direction of the path are greater than the ones for the other direction. 

% In order for $S$ being a facet of $\mathcal{C}_{C_n}$ it has to avoid the leading monomials of the binomial coming from cyclic pairs as well as the ones coming from directed cycles. In other words, the graph $G_S$ is not allowed to contain any of the following types of subgraphs:
% \begin{enumerate}
%     \item[(1)] a \emph{partially oriented cycle} TODO: add figures
%     \item[(2)] a \emph{completely oriented cycle}.
% \end{enumerate}

%When $n\geq 3$, $C_n$ is a simple graph: in this case we shorten our notation and denote with $y_{ii+1}$ the variable $y_{ii+1\{ii+1\}}$ and with $y_{i+1i}$ the variable $y_{i+1i\{ii+1\}}$.

Similar to the results in Section~\ref{sec: the path}, we can use the characterization in Theorem~\ref{thm: cycle facets} to enumerate the facets of the triangulation and derive a closed formula for the normalized volume of $\mathcal{C}_{C_n}$. 
%---THEOREM: Cycle Volume---
\begin{theorem}\label{thm : vol cycle}
    The cosmological polytope of the $n$-cycle $C_n$ has normalized volume
    \[
        \textnormal{Vol}(\mathcal{C}_{C_n})=4^n-2^n.
    \]
\end{theorem}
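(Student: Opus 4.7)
The plan is to exploit the fact that by Theorem~\ref{thm: gb} and Corollary~\ref{cor: triangulation}, the good term order on $R_{C_n}$ gives a regular unimodular triangulation $\mathcal{T}$ of $\mathcal{C}_{C_n}$, so $\textnormal{Vol}(\mathcal{C}_{C_n})$ equals the number of facets of $\mathcal{T}$. I would then enumerate the facets directly using the graphical characterization of Theorem~\ref{thm: cycle facets}.

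First I would stratify facets $S$ by their white-node set $Z_S = \{z_{i_1},\dots,z_{i_k}\}$ with $i_1<\cdots<i_k$. By condition (1) of Theorem~\ref{thm: cycle facets}, one must have $Z_S \neq \emptyset$, so we sum only over nonempty subsets of $[n]$, of which there are $2^n-1$. Once $Z_S$ is fixed, conditions (2) and (3) of Theorem~\ref{thm: cycle facets} partition $G_S$ into $k$ cyclic arcs: the arcs $\{i_t,i_t+1,\dots,i_{t+1}\}$ for $t=1,\dots,k-1$ plus the wrap-around arc $\{i_k,i_k+1 \bmod n,\dots,i_1\}$. Writing $\ell_t$ for the number of edges in the $t$-th arc, we have $\sum_{t=1}^{k}\ell_t = n$.

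Next I would observe that, because both types of arcs are constrained by Theorem~\ref{lem: path tri facets}(2) with both endpoints being white nodes, the counting carried out in the proof of Corollary~\ref{cor: volume path} applies uniformly: an arc of length $\ell$ contributes exactly $2^\ell$ admissible subgraphs. Indeed, that argument showed that the single edge may be placed at any of the $\ell$ edge slots, contributing $2$ choices when at the leftmost slot and $2^s$ choices when at position $s>0$, summing to $2+\sum_{s=1}^{\ell-1}2^s = 2^\ell$. Multiplying over the $k$ arcs associated to a fixed $Z_S$ gives
\[
\prod_{t=1}^{k}2^{\ell_t} \;=\; 2^{\sum_t \ell_t} \;=\; 2^n.
\]

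Summing over the $2^n-1$ possible choices of nonempty $Z_S \subseteq [n]$ then yields
\[
\textnormal{Vol}(\mathcal{C}_{C_n}) \;=\; (2^n-1)\cdot 2^n \;=\; 4^n - 2^n,
\]
as desired. The argument is essentially a bookkeeping exercise once Theorem~\ref{thm: cycle facets} is in hand; the only subtle point to verify carefully is that the wrap-around arc in condition (3) behaves exactly like a path arc between two white nodes, so the $2^\ell$ count of Corollary~\ref{cor: volume path} transfers without modification. This is where I would spend the most care, explicitly noting that the specified term order makes this wrap-around arc fit the framework of Theorem~\ref{lem: path tri facets}(2), in particular preventing the clockwise partially directed cycles forbidden by the cyclic binomials.
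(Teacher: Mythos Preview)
Your proposal is correct and follows essentially the same approach as the paper: stratify facets by the nonempty white-node set $Z_S$, use the path-arc count $2^\ell$ from Corollary~\ref{cor: volume path} on each of the $k$ cyclic arcs to get $2^n$ facets per choice of $Z_S$, and sum over the $2^n-1$ nonempty subsets. The paper's proof additionally spells out explicitly why the arc decomposition rules out clockwise partially directed and fully directed cycles (distinguishing whether some arc is of one of the first three types in Theorem~\ref{lem: path tri facets}(2) versus all arcs being of the fourth type), which is the verification you flagged as the ``subtle point''; you would do well to include that case split rather than just invoking Theorem~\ref{thm: cycle facets}.
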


\begin{proof}
Let $S$ be a facet of the triangulation $\mathcal{T}$ described above and $Z_S = \{z_{i_1},\ldots, z_{i_k}\}$ where $i_1<\cdots < i_k$. 
Then the induced subgraph of $G_S$ on $\{i_\ell,\ldots,i_{\ell+1 \mod k}\}$ for $1\leq \ell\leq k$ is of the form described in \Cref{lem: path tri facets} (2). By the proof of \Cref{cor: volume path} there are $2^{i_{\ell+1\mod k}-i_\ell}$ possible subgraphs for this interval which gives $\prod_{\ell=1}^k 2^{i_{\ell+1\mod k}-i_\ell}=2^n$ possible graphs $G_S$ with a prescribed set of white vertices. Varying the latter over all non-empty subsets of the vertices, we get a total of $(2^n-1)\dot 2^n=4^n-2^n$ possible subgraphs $G_S$. It remains to verify that none of these subgraphs contains a clockwise partially oriented cycles or a completely oriented cycle. 
To see this, it suffices to note that if any induced subgraph on $\{i_\ell,\ldots,i_{\ell+1 \mod k}\}$ for  $1\leq \ell\leq k$ is of the first three types in \Cref{lem: path tri facets} (2), then the unique single edge already prevents the existence of such a cycle. 
However, if all considered subgraphs are of the fourth type then none of the variables $y_{ii+1\mod n}$ is present. 
Hence, $G_S$ neither contains a clockwise partially oriented cycle nor a completely oriented cycle. This finishes the proof.
\end{proof}

\section{The Cosmological Polytope of a Tree}
\label{sec: trees}

The description of the facets for a regular unimodular triangulation arising from a good term order for the path in Section~\ref{sec: the path} can be extended to any tree.  
To do so, we first specify a good term order associated to an arbitrary tree $T$ on node set $[n+1]$ that generalizes the term order used in Section~\ref{sec: the path}.  

Fix a leaf node $r$ of $T$ and consider the associated orientation of $T$, denoted $\overrightarrow{T}$, in which all edges are directed away from $r$.
Let $\preceq_r$ denote the partial order on $[n+1]$ given by the distance of a node $i\in[n+1]$ from $r$; that is, $i\preceq_r j$ whenever the length of the unique directed path from $r$ to $i$ in $\overrightarrow{T}$ is less than or equal to the length of the unique directed path from $r$ to $j$ in $\overrightarrow{T}$. 
Fix a linear extension $<_r$ of $\prec_r$ in the following way:

A \emph{floret} in a rooted directed tree consists of a node $i$ and all of its \emph{children}; i.e., the nodes $j$ such that $i\rightarrow j$ is an edge of the tree. 
Iterating over, $k = 1,2,\ldots $, the distance of a node in $\overrightarrow{T}$ from $r$, we consider all nodes at distance $k-1$.  These nodes have been totally ordered as $i_1 < \cdots < i_t$.  
Iterating over $i_\ell$ for $\ell = 1,\ldots, t$, totally order the children of $i_\ell$.  
Then totally order the children of $i_{\ell+1}$ such that all children of $i_{\ell+1}$ are larger than those of $i_\ell$.  For an example consider \Cref{fig: tree}.

We then totally order the edges of $\overrightarrow{T}$ such that for $(i,j),(s,t)\in E(\overrightarrow{T})$ we have $(i,j) \prec (s,t)$ if 
\begin{enumerate}
    \item $i <_r s$, or
    \item $i = s$ and $j <_r t$. 
\end{enumerate}
% \textcolor{blue}{M: If I understand right, what comes later, I think, that you are taking a more specific linear extension. If one fixes a drawing of the tree with the root on top and nodes with distance $i$ from the root at level $i$, one could order the vertices from top to bottom and within a level from left to right. In this way, you get that if you look at the paths between $\alpha$ and $i_1$ and $\alpha$ and $i_k$ (defined on the next page) that the edges in the path to $i_1$ are smaller than the ones in the path to $i_k$ on the same level. This also explains the asymmetry in (3)}
Using these orderings, we can then define a total order $<$ of the variables $y_{ij}$, $z_{ij}$, $t_{ij}$ and $z_i$ such that
\begin{enumerate}
    \item If $(i,j),(s,t)\in E(\overrightarrow{T})$ and $(i,j)\prec (s,t)$, then
    \begin{itemize}
        \item $y_{ij} > y_{st}$,
        \item $y_{ts} > y_{ji}$, 
        \item $z_{ij} > z_{st}$, and
        \item $t_{ij} > t_{st}$, 
    \end{itemize}
    \item If $(i,j), (s,t)\in E(\overrightarrow{T})$, then 
    \vspace{-12pt}
    \begin{multicols}{3}
    \begin{itemize}
        \item $y_{ij} > y_{ts}$,
        \item $y_{ij} > z_{st}$, 
        \item $y_{ji} > z_{st}$, 
        \item $y_{ij} > t_{st}$, 
        \item $y_{ji} > t_{st}$,
        \item $y_{ij} > z_s$, 
        \item $y_{ji} > z_s$, 
        \item $z_{ij} > t_{st}$, 
        \item $z_{ij} > z_s$,
        \item $t_{ij} > z_s$, and
    \end{itemize}
    \end{multicols}
    \vspace{-12pt}
    \item If $i <_r j$, then $z_i > z_j$. 
\end{enumerate}

This variable ordering is seen to generalize the variable ordering \eqref{eqn: path variable order}, and the associated lexicographic term order on the monomials in $R_T$ is a good term order. 
Hence, by Corollary~\ref{cor: triangulation}, we obtain a regular unimodular triangulation $\mathcal{T}$ of $\mathcal{C}_T$. 

We note the following property of the chosen edge ordering of $T$.
\begin{lemma}
    \label{lem: largest edge}
    Suppose that $i <_r j$, and let $\pi = \{i_1i_2,\ldots, i_{k-1}i_k\}$ be the unique path in $T$ between $i_1 = i$ and $i_k = j$. 
    Then $(i_2,i_1) \prec (i_{k-1},i_k)$.  
\end{lemma}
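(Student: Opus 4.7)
The plan is to leverage the BFS-like construction of the linear extension $<_r$ together with a depth comparison. Let $d(v)$ denote the distance from $r$ to $v$ in $\overrightarrow{T}$. Since both $(i_2, i_1)$ and $(i_{k-1}, i_k)$ lie in $E(\overrightarrow{T})$, $i_2$ is the parent of $i_1$ and $i_{k-1}$ is the parent of $i_k$, so $d(i_2) = d(i_1) - 1$ and $d(i_{k-1}) = d(i_k) - 1$. The hypothesis $i <_r j$, combined with the fact that $<_r$ extends the depth-based preorder $\prec_r$, forces $d(i_1) \leq d(i_k)$, since otherwise $i_k \prec_r i_1$ would yield $i_k <_r i_1$, contradicting $i_1 <_r i_k$. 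Consequently $d(i_2) \leq d(i_{k-1})$.

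I would then split into two cases. If $d(i_2) < d(i_{k-1})$, then $i_2 \prec_r i_{k-1}$ and hence $i_2 <_r i_{k-1}$, so the first clause in the definition of $\prec$ on $E(\overrightarrow{T})$ immediately delivers $(i_2, i_1) \prec (i_{k-1}, i_k)$. If instead $d(i_2) = d(i_{k-1})$, then $i_1$ and $i_k$ sit at a common depth level, and I would invoke the iterative BFS construction of $<_r$: at each depth level, vertices are ordered first by the order of their parents at the preceding level, and then by a within-parent tiebreaker. Applied to $i_1 <_r i_k$, this yields either (i) $i_2 = \mathrm{parent}(i_1) <_r \mathrm{parent}(i_k) = i_{k-1}$, in which case the first clause again suffices, or (ii) $i_2 = i_{k-1}$ with $i_1$ preceding $i_k$ in the within-parent order, in which case the second clause of the definition closes the argument.

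The main obstacle is the equal-depth case: it hinges on recognizing that the BFS-style construction of $<_r$ propagates parent comparisons one level down to vertices at the same depth in a controlled way. Once this propagation is isolated as a standalone property of $<_r$, the two cases reduce to direct applications of the two-clause definition of $\prec$ on $E(\overrightarrow{T})$.
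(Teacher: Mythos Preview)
Your proof is correct and rests on the same core observation as the paper's---that the BFS-style construction of $<_r$ propagates parent order to children at a common depth---but you deploy it more economically. The paper splits on whether $d(i)=d(j)$ or $d(i)<d(j)$: in the equal-depth case it routes through $\alpha=\min_{<_r}\{i_1,\dots,i_k\}$ and runs an induction along the two branches $\pi_1,\pi_2$ to show $i_{s,1}<_r i_{s,2}$ level by level; in the strict case it locates the first edge of $\pi_2$ that jumps past the maximal depth of $\pi_1$ and compares against that. You bypass $\alpha$ entirely by working only with the four vertices $i_1,i_2,i_{k-1},i_k$ and the single contrapositive ``same-depth children inherit the order of their parents.'' This gives a shorter argument, and the paper's level-by-level induction is really just a repeated application of the one step you isolate.

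One remark that applies equally to your argument and the paper's: the opening claim that $(i_2,i_1)\in E(\overrightarrow{T})$---equivalently, that $i_2$ is the parent of $i_1$---fails precisely when $i_1$ is an ancestor of $i_k$ (so $i_1=\alpha$ and $i_2$ is a child of $i_1$). In that degenerate case the ordered pair $(i_2,i_1)$ is not an edge of $\overrightarrow{T}$ and $\prec$ does not literally apply. The paper's proof does not address this case either, and in the lemma's actual uses (determining leading terms of zig-zag binomials with $E_1,E_2\neq\emptyset$) one always has $i_1\neq\alpha$, so the issue is cosmetic. If you want to be fully self-contained you could note that the hypothesis $i<_r j$ forces $(i_{k-1},i_k)\in E(\overrightarrow{T})$ unconditionally, and that the remaining assertion $(i_2,i_1)\in E(\overrightarrow{T})$ is implicit in the statement since $\prec$ is only defined on $E(\overrightarrow{T})$.
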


\begin{proof}
Note first that since $i <_r j$, the distance from $r$ to $j$ in $T$ is at least the distance from $r$ to $i$ in $T$.  
Suppose that these two distances are equal.  
Let $\alpha = \min_{<_r}\{i_1,\ldots, i_k\}$, and let $\pi_1$ and $\pi_2$ be the unique path between $i_1$ and $\alpha$ and $i_k$ and $\alpha$, respectively.  
We index $\pi_1$ as $\pi_1 = \{i_{0,1}i_{1,1}, i_{1,1}i_{2,1},\ldots, i_{t-1,1}i_{t,1}\}$ and $\pi_2$ as $\pi_2 = \{i_{0,2}i_{1,2}, i_{1,2}i_{2,2},\ldots, i_{t-1,2}i_{t,2}\}$ where $i_{0,1} = i_{0,2} = \alpha$, $i_{t,1} = i_1$ and $i_{t,2} = i_k$. 
Observe that $\pi_1$ and $\pi_2$ have the same length since $i$ and $j$ have the same distance from $r$.  

We claim now that $i_{j,1} <_r i_{j,2}$ for all $j = 1,\ldots, t$.  
To see this, suppose for the sake of contradiction that there exists a $j$ for which $i_{j,1} >_r i_{j,2}$.  
Then the floret for $i_{j,2}$ has its children ordered before that of $i_{j,1}$.  
This implies that $i_{j+1,2} >_r i_{j+1,1}$.
Iterating this argument implies that $i = i_{t,1} >_r i_{t,2} = j$, which is a contradiction.  
Hence, $i_{j,1} <_r i_{j,2}$ for all $j = 1,\ldots, t$.  
It follows that
\[
(i_2, i_1) = (i_{t-1,1}, i_{t,1}) \prec (i_{t-1,2}, i_{t,2}) = (i_{k-1}, i_k), 
\]
as desired. 

Now suppose that the distance from $r$ to $j$ in $T$ is strictly larger than the distance from $r$ to $i$ in $T$. 
Then $\pi_1$ contains only nodes of distance at most $t$ from $r$ and $\pi_2$ contains a node at distance $t+1$ from $r$, for minimally chosen $t$.  
By the chosen edge ordering, the edge in $\pi_2$ from the node at distance $t$ to the node at distance $t+1$ is larger than all edges in $\pi_1$.  This edge is also seen to be equal to, or smaller than $(i_{k-1},i_k)$, which completes the proof. 
\end{proof}

For each edge $ij\in E(T)$ the fundamental binomials in $B_T$ imply that if $S$ is a subset of the generators of $R_T$ corresponding to a face of $\mathcal{T}$, then the graph $G_S$ does not contain any of the subgraphs listed in \eqref{eqn: fundamental subgraphs}.

Similarly, the zig-zag binomials in $B_T$ imply that the graph $G_S$ must not contain certain subgraphs along paths if $S$ is a face of $\mathcal{T}$. These subgraphs generalize the partially directed increasing paths from Section~\ref{sec: the path} and are defined as follows:

Let $i_1,i_k$ be vertices of $T$ such that $i_1 <_r i_k$, let $\pi = \{i_1i_2, i_2i_3, \ldots, i_{k-1}i_k\}$ be the unique path in $T$ between $i_1$ and $i_k$, and let $\alpha = \min_{<_r}\{i_1,\ldots, i_k\}$. 
Further let $\pi_1$ and $\pi_2$ denote the subpaths of $\pi$ between $i_1$ and $\alpha$ and $i_k$ and $\alpha$, respectively. 
Given the ordering of the variables above, the leading terms of any zig-zag binomial for a zig-zag pair on $\pi$ have associated graphs being one of the following:
\begin{enumerate}
    \item Partially directed paths toward $i_1$ ending in $\circ$ that include a directed edge on $\pi_1$ pointing toward $i_1$. 
    \item Partially directed paths toward $i_k$ ending in $\circ$ that include an edge directed toward $i_k$ on $\pi_2$, and
    \item Partially directed paths toward $i_1$ ending in $\circ$ with all edges on $\pi_2$ directed and all edges on $\pi_1$ undirected. 
\end{enumerate}

Observe that the paths in (2) include the paths excluded by the leading terms of zig-zag binomials for the path in Section~\ref{sec: the path} by taking $i_1 = \alpha$. 
To see that these three options contain all possible leading terms of zig-zag binomials, consider a zig-zag pair $(E_1,E_2)$ on the path $\pi$, where $E_1$ are the edges directed toward $i_k$ and $E_2$ are the edges directed toward $i_1$. 
If either $E_1$ contains edges on $\pi_2$ or $E_2$ contains edges on $\pi_1$, then under the given variable ordering one of the $y$-variables represented by these edges is the largest.  
Hence, under the given term order the leading term of the associated zig-zag pair is represented by a graph of type (1) or (2).  

On the other hand, if $E_1$ is all the edges on $\pi_1$ and $E_2$ is all the edges on $\pi_2$, then by Lemma~\ref{lem: largest edge}, the leading term is given by the $y$-variables in $E_2$.  
Hence, such a zig-zag pair is represented by the graphs in (3) listed above. 

We call these paths \emph{zig-zag obstructions}.  
Zig-zag obstructions of the form (i) for $i = 1,2,3$ are called \emph{zig-zag obstructions of type i}. 
\begin{example}
\label{ex: zig-zag obstructions}
In Figure~\ref{fig: zig-zag obstructions} we see four graphs.  The first three each contain a zig-zag obstruction of type 1, 2, and 3, respectively, when considered from left-to-right.  The second graph, which highlights in red a zig-zag obstruction of type 2 also contains three additional zig-zag obstructions (of the same type).  These are given by replacing exactly the one of the directed edges with its undirected version, or alternatively considering the subgraph of the red edges where we forget the least of the two directed edges.  The rightmost graph depicts a graph that contains no zig-zag obstructions. 
\begin{figure}
     \centering
     \begin{tikzpicture}[thick,scale=0.4]
        \tikzset{decoration={snake,amplitude=.4mm,segment length=2mm, post length=0mm,pre length=0mm}}
	
	%---NODES---	 
       \node[circle, draw = black!100, fill=black!100, inner sep=2pt, minimum width=2pt] (1) at (0,0) {};
       \node[circle, draw = black!100, fill=black!100, inner sep=2pt, minimum width=2pt] (2) at (2,2) {};
       \node[circle, draw = black!100, fill=black!100, inner sep=2pt, minimum width=2pt] (3) at (2,-2) {};
       \node[circle, draw = black!100, fill=black!100, inner sep=2pt, minimum width=2pt] (4) at (4,4) {};
       \node[circle, draw = black!100, fill=black!100, inner sep=2pt, minimum width=2pt] (5) at (4,-4) {};
       \node[circle, draw = red!100, fill=black!00, inner sep=2pt, minimum width=2pt] (6) at (6,6) {};
        \node[circle, draw = black!100, fill=black!00, inner sep=2pt, minimum width=2pt] (7) at (6,-6) {};

	%---EDGES---	 
	\draw[-]   (1) edge (2) ;
       \draw[->,red]   (1) edge[bend left] (2) ;
       \draw[-]   (1) edge (3) ;
       \draw[<-]   (1) edge[bend left] (3) ;
       \draw[-,red]   (2) edge (4) ;
       \draw[<-]   (2) edge[bend left] (4) ;
       \draw[-]   (3) edge (5) ;
       \draw[<-]   (3) edge[bend left] (5) ;
       \draw[-,red]   (4) edge (6) ;
       \draw[<-]   (4) edge[bend left] (6) ;
       \draw[decorate]   (5) -- (7) ;

       %---LABELS---
       \node at (0-0.5,-0.75) {\footnotesize$\alpha = 1$} ;
       \node at (2,2-0.75) {\footnotesize$2$} ;
       \node at (2,-2-0.75) {\footnotesize$3$} ;
       \node at (4,4-0.75) {\footnotesize$4$} ;
       \node at (4,-4-0.75) {\footnotesize$5$} ;
       \node at (6+0.75,6-0.75) {\footnotesize$i_1 = 6$} ;
       \node at (6+0.75,-6-0.75) {\footnotesize$i_k = 7$} ;

    \end{tikzpicture}\hspace{-20pt}
    \begin{tikzpicture}[thick,scale=0.4]
        \tikzset{decoration={snake,amplitude=.4mm,segment length=2mm, post length=0mm,pre length=0mm}}
	
	%---NODES---	 
       \node[circle, draw = black!100, fill=black!100, inner sep=2pt, minimum width=2pt] (1) at (0,0) {};
       \node[circle, draw = black!100, fill=black!100, inner sep=2pt, minimum width=2pt] (2) at (2,2) {};
       \node[circle, draw = black!100, fill=black!100, inner sep=2pt, minimum width=2pt] (3) at (2,-2) {};
       \node[circle, draw = black!100, fill=black!100, inner sep=2pt, minimum width=2pt] (4) at (4,4) {};
       \node[circle, draw = black!100, fill=black!100, inner sep=2pt, minimum width=2pt] (5) at (4,-4) {};
       \node[circle, draw = black!100, fill=black!00, inner sep=2pt, minimum width=2pt] (6) at (6,6) {};
        \node[circle, draw = red!100, fill=black!00, inner sep=2pt, minimum width=2pt] (7) at (6,-6) {};

	%---EDGES---	 
	\draw[-]   (1) edge (2) ;
       \draw[->]   (1) edge[bend left] (2) ;
       \draw[-]   (1) edge (3) ;
       \draw[->,red]   (1) edge[bend left] (3) ;
       \draw[decorate]   (2) -- (4) ;
       % \draw[<-]   (2) edge[bend right] (4) ;
       \draw[-]   (3) edge (5) ;
       \draw[->,red]   (3) edge[bend left] (5) ;
       \draw[-]   (4) edge (6) ;
       \draw[<-]   (4) edge[bend left] (6) ;
       \draw[-,red]   (5) edge (7) ;
       \draw[<-]   (5) edge[bend left] (7) ;

       %---LABELS---
       \node at (0-0.5,-0.75) {\footnotesize$\alpha = 1$} ;
       \node at (2,2-0.75) {\footnotesize$2$} ;
       \node at (2,-2-0.75) {\footnotesize$3$} ;
       \node at (4,4-0.75) {\footnotesize$4$} ;
       \node at (4,-4-0.75) {\footnotesize$5$} ;
       \node at (6+0.75,6-0.75) {\footnotesize$i_1 = 6$} ;
       \node at (6+0.75,-6-0.75) {\footnotesize$i_k = 7$} ;
       
    \end{tikzpicture}\hspace{-20pt}
    \begin{tikzpicture}[thick,scale=0.4]
        \tikzset{decoration={snake,amplitude=.4mm,segment length=2mm, post length=0mm,pre length=0mm}}
	
	%---NODES---	 
       \node[circle, draw = black!100, fill=black!100, inner sep=2pt, minimum width=2pt] (1) at (0,0) {};
       \node[circle, draw = black!100, fill=black!100, inner sep=2pt, minimum width=2pt] (2) at (2,2) {};
       \node[circle, draw = black!100, fill=black!100, inner sep=2pt, minimum width=2pt] (3) at (2,-2) {};
       \node[circle, draw = black!100, fill=black!100, inner sep=2pt, minimum width=2pt] (4) at (4,4) {};
       \node[circle, draw = black!100, fill=black!100, inner sep=2pt, minimum width=2pt] (5) at (4,-4) {};
       \node[circle, draw = red!100, fill=black!00, inner sep=2pt, minimum width=2pt] (6) at (6,6) {};
       \node[circle, draw = black!100, fill=black!00, inner sep=2pt, minimum width=2pt] (7) at (6,-6) {};

	%---EDGES---	 
	\draw[-,red]   (1) edge (2) ;
       \draw[<-]   (1) edge[bend left] (2) ;
       % \draw[-]   (1) edge (3) ;
       \draw[<-,red]   (1) edge (3) ;
       \draw[-,red]   (2) edge (4) ;
       \draw[<-]   (2) edge[bend left] (4) ;
       \draw[-]   (3) edge (5) ;
       \draw[<-,red]   (3) edge[bend left] (5) ;
       \draw[-,red]   (4) edge (6) ;
       \draw[<-]   (4) edge[bend left] (6) ;
       \draw[-]   (5) edge (7) ;
       \draw[<-,red]   (5) edge[bend left] (7) ;

       %---LABELS---
       \node at (0-0.5,-0.75) {\footnotesize$\alpha = 1$} ;
       \node at (2,2-0.75) {\footnotesize$2$} ;
       \node at (2,-2-0.75) {\footnotesize$3$} ;
       \node at (4,4-0.75) {\footnotesize$4$} ;
       \node at (4,-4-0.75) {\footnotesize$5$} ;
       \node at (6+0.75,6-0.75) {\footnotesize$i_1 = 6$} ;
       \node at (6+0.75,-6-0.75) {\footnotesize$i_k = 7$} ;

    \end{tikzpicture}\hspace{-20pt}
    \begin{tikzpicture}[thick,scale=0.4]
        \tikzset{decoration={snake,amplitude=.4mm,segment length=2mm, post length=0mm,pre length=0mm}}
	
	%---NODES---	 
       \node[circle, draw = black!100, fill=black!100, inner sep=2pt, minimum width=2pt] (1) at (0,0) {};
       \node[circle, draw = black!100, fill=black!100, inner sep=2pt, minimum width=2pt] (2) at (2,2) {};
       \node[circle, draw = black!100, fill=black!100, inner sep=2pt, minimum width=2pt] (3) at (2,-2) {};
       \node[circle, draw = black!100, fill=black!100, inner sep=2pt, minimum width=2pt] (4) at (4,4) {};
       \node[circle, draw = black!100, fill=black!100, inner sep=2pt, minimum width=2pt] (5) at (4,-4) {};
       \node[circle, draw = black!100, fill=black!00, inner sep=2pt, minimum width=2pt] (6) at (6,6) {};
       \node[circle, draw = black!100, fill=black!00, inner sep=2pt, minimum width=2pt] (7) at (6,-6) {};

	%---EDGES---	 
	\draw[-]   (1) edge (2) ;
       \draw[<-]   (1) edge[bend left] (2) ;
       \draw[-]   (1) edge (3) ;
       \draw[->]   (1) edge[bend left] (3) ;
       \draw[-]   (2) edge (4) ;
       \draw[<-]   (2) edge[bend left] (4) ;
       % \draw[-]   (3) edge (5) ;
       \draw[<-]   (3) edge (5) ;
       \draw[-]   (4) edge (6) ;
       \draw[<-]   (4) edge[bend left] (6) ;
       \draw[-]   (5) edge (7) ;
       \draw[<-]   (5) edge[bend left] (7) ;

       %---LABELS---
       \node at (0-0.5,-0.75) {\footnotesize$\alpha = 1$} ;
       \node at (2,2-0.75) {\footnotesize$2$} ;
       \node at (2,-2-0.75) {\footnotesize$3$} ;
       \node at (4,4-0.75) {\footnotesize$4$} ;
       \node at (4,-4-0.75) {\footnotesize$5$} ;
       \node at (6+0.75,6-0.75) {\footnotesize$i_1 = 6$} ;
       \node at (6+0.75,-6-0.75) {\footnotesize$i_k = 7$} ;

    \end{tikzpicture}
        \caption{The first three graphs are, respectively from left-to-right, examples of zig-zag obstructions of type 1, 2 and 3 with the obstruction depicted in red. The rightmost graph is an example of a graph that contains no zig-zag obstructions. The order $<_r$ is the natural order on the vertex set.}
        \label{fig: zig-zag obstructions}
\end{figure}
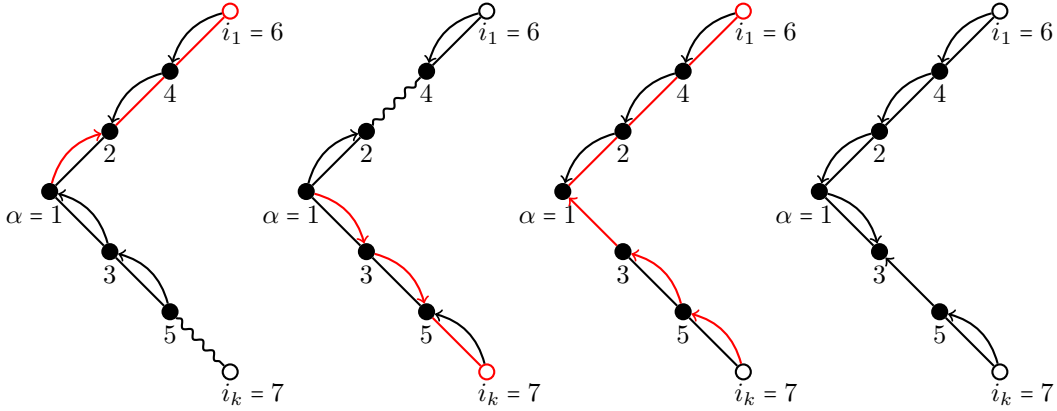
\end{example}

For the chosen term order, we can further reduce the Gr\"obner basis identifed for $I_T$.  
Consider paths of the form $\pi = \{i_1,\ldots, i_k\}$ in which $i_1 <_r i_2 <_r \cdots <_r i_k$.  
We define a \emph{simple zig-zag pair of type 1} as zig-zag pair $(E_1,E_2)$ where
$E_1 = \{i_1 \rightarrow i_{2}\}$ and $E_2 = \{i_{t+1} \rightarrow i_{t}~ :~ t\in\{2,\ldots, k-1\}\}$.
Consider also paths of the form $\pi = \{i_1,i_2,\ldots, i_k\}$ in which $i_j <_r i_1$ for all $j = 2,\ldots, k-1$ but $i_1 <_r i_k$.  
Let $\alpha = \min_{<_r}\{i_1,\ldots, i_k\}$, and take $\pi_1$ and $\pi_2$ as before.  
A \emph{simple zig-zag pair of type 2} is a zig-zag pair $(E_1,E_2)$ on this path where $E_1$ consists of all edges on $\pi_1$ oriented toward $\alpha$ and $E_2$ consists of all edges on $\pi_2$ oriented toward $\alpha$. 

% Consider also paths of the form $\pi = \{i_1,\ldots, i_k\}$ in which $i_2 <_r < i_3 <_r \cdots <_r i_{k}$ and $i_2 <_r i_1$.  
% A \emph{simple zig-zag pair of type 2} is an orientation of $\pi$, $(P_1,P_2)$ where $E(P_1) = \{i_1 \rightarrow i_2\}$ and $E(P_2) = \{i_{t+1} \rightarrow i_t : t\in\{2,\ldots, k-1\}\}$. 

%---LEMMA: SIMPLE ZIG-ZAG PAIRS---
\begin{lemma}
    \label{lem: simple zig-zags}
    Let $T$ be a tree.  The leading term of any zig-zag binomial under the lexicographic order on $R_T$ corresponding to $<$ is divisible by the leading term of a simple zig-zag binomial.
\end{lemma}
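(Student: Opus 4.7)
The plan is to reduce to a case analysis based on the classification of leading terms of zig-zag binomials into types~(1), (2), (3) given immediately before the lemma. Fix a zig-zag binomial $b_{E_1, E_2}$ on a path $\pi = i_1, \ldots, i_k$, and after relabeling the endpoints if necessary (which only negates $b_{E_1, E_2}$ and leaves $\text{lt}(b_{E_1, E_2})$ unchanged), assume $i_1 <_r i_k$; write $\alpha = i_m = \min_{<_r}\pi$ and let $\pi_1, \pi_2$ be the subpaths from $i_1$ to $\alpha$ and from $\alpha$ to $i_k$.

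I would first handle Case~(2), where the first monomial $z_{i_k}\prod_{E_1}y\prod_{E_2}z_e$ is leading and contains a forward $y$-variable $y_{i_t i_{t+1}}$ with $i_t i_{t+1} \in E_1 \cap \pi_2$. Choosing $t$ maximal forces $i_{t+1}i_{t+2}, \ldots, i_{k-1}i_k$ to all lie in $E_2$, supplying $z_e$-factors in $\text{lt}(b_{E_1, E_2})$, so the simple zig-zag pair of type~1 on the subpath $\pi' = i_t, \ldots, i_k$ with distinguished first edge $i_t \to i_{t+1}$ has leading term $z_{i_k}\,y_{i_t i_{t+1}}\,z_{i_{t+1}i_{t+2}}\cdots z_{i_{k-1}i_k}$ which divides $\text{lt}(b_{E_1, E_2})$. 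Case~(1) is then handled symmetrically on $\pi_1$ using the minimal $s$ with $i_s i_{s+1} \in E_2$, reading the resulting subpath $i_{s+1}, i_s, \ldots, i_1$ in the $<_r$-increasing direction.

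Case~(3) is the most delicate: the leading monomial $z_{i_1}\prod_{E_2}y\prod_{E_1}z_e$ contains no forward $y$-variables, so $\pi_1 \subseteq E_1$ and $\pi_2 \subseteq E_2$. Since every type~1 simple zig-zag leading term carries a forward $y$, the divisor must be of type~2; however $\pi$ itself need not satisfy the interior-$<_r$-condition required in that definition. To circumvent this I pass to the subpath $\pi^{\star} = i_1, i_2, \ldots, i_{k^{\star}}$ where $k^{\star}$ is minimal with $i_{k^{\star}} >_r i_1$, so that the interior $i_2, \ldots, i_{k^{\star}-1}$ satisfies the type~2 hypothesis. Strict monotonicity of $<_r$ along each of $\pi_1$ and $\pi_2$ guarantees that $\alpha$ remains the $<_r$-minimum of $\pi^{\star}$, so $\pi_1^{\star} = \pi_1$ and $\pi_2^{\star} \subseteq \pi_2$; the leading term of the simple zig-zag pair of type~2 on $\pi^{\star}$ is then a sub-monomial of $\text{lt}(b_{E_1, E_2})$, giving the desired divisibility.

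The main obstacle lies in verifying two subtle points. First, the classification into types~(1)--(3) must genuinely exhaust the leading terms of zig-zag binomials; in the mixed subcase where both $\pi_1 \cap E_2$ and $\pi_2 \cap E_1$ are nonempty, both monomials carry forward $y$-variables and comparing them requires Lemma~\ref{lem: largest edge} together with the child ordering at $\alpha$. Second, the small degenerate subcases of the constructions in Cases~(1) and~(2), in which the candidate subpath collapses to a single edge, must be shown not to arise: the leading terms in those situations already contain a factor $y_{ije}z_j$ that is itself the leading term of a fundamental binomial, so the corresponding zig-zag binomials are redundant in $B_T$ and may be excluded from the reduction being set up here.
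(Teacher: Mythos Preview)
Your approach is essentially the same as the paper's: both argue by the type~(1)/(2)/(3) case split, in types~(1) and~(2) pick the extremal directed edge nearest the relevant endpoint to carve out an $<_r$-monotone subpath supporting a simple zig-zag pair of type~1, and in type~(3) truncate $\pi$ at the first $\pi_2$-vertex exceeding $i_1$ to obtain a simple zig-zag pair of type~2 (invoking \Cref{lem: largest edge} for the leading-term comparison). Your explicit treatment of the single-edge degenerate subcase---observing that the leading term then already contains a fundamental leading term $y_{ije}z_j$---is a point the paper's proof passes over silently; your resolution is adequate for the intended application to the facet description, even if it slightly relaxes the lemma as literally stated.
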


\begin{proof}
Under the given term order, the leading term of the zig-zag binomial for a simple zig-zag pair is graphically represented by a partially directed path from $i_1$ to $i_k$ in which the first edge $i_1 \rightarrow i_2$ is directed toward $i_k$ and all other edges are undirected, plus a symbol for the variable $z_{i_1}$.
Given a zig-zag binomial whose leading term is represented by a zig-zag obstruction of type 1, the associated path $\pi = \{i_1,\ldots, i_k\}$ is such that the subpath $\pi_1$ contains at least one directed edge pointing toward $i_1$.  
Pick the edge $i_s\leftarrow i_{s+1}$ of this form on $\pi_1$ with $s$ minimal.  
The remaining edges between this edge and $i_1$ must be undirected, and hence the subpath on $\{i_1,\ldots, i_s\}$ is the graphical representation of the leading term of the zig-zag binomial of a simple zig-zag pair of type 1.
Hence, the leading term of this zig-zag binomial is divisible by the leading term of a zig-zag binomial of a simple zig-zag pair. 
The same argument shows that all zig-zag binomials whose leading terms are represented by zig-zag obstructions of type 2 are also divisible by the leading term of some zig-zag binomial for a simple zig-zag pair of type 1. 

For zig-zag binomials whose leading terms are represented by zig-zag obstructions of type 3, the subpath $\{i_1,\ldots, i_t = \alpha, i_{t+1},\ldots, i_s\}$, where $i_s$ is the first node on $\pi_2$ larger than $i_1$ under $<_r$, is the graphical representation of the leading term of a zig-zag binomial for a simple zig-zag pair of type 2. 
This follows from Lemma~\ref{lem: largest edge}. 
Hence, all such zig-zag binomials also have leading terms divisible by the leading term of a zig-zag binomial for a simple zig-zag pair. 
This completes the proof. 
\end{proof}

% \textcolor{blue}{M: Should we write $T_S$ instead of $G_S$ if $G=T$ is a tree? (Similar for $\widetilde{G}_S$.)}
% \textcolor{red}{L: In the tree section this is easy, but it gets trickier in the sections on the cycle and the path because those graphs have subscripts: $I_{n+1}$ and $C_n$.  So we would end up with $I_{n+1}_S$.  This may look awful?  One solution is to just say we are working with the path on $n+1$ nodes and only write $I$.  Then we'd write $I_S$.  Is this better or worse than $G_S$?  Also, I've changed my mind about this. I think they should remain $G_S$.  The graph $G_S$ is really only tangentially related to the graph $G$ by the fact that its structure is mildly limited by the available generators in $R_G$.  It is more defined by $S$ than $G$  The following definition gives a great example as to why to not associate $G_S$ so directly with $G$. }
Let $S$ be the subset of variables in the leading term of a zig-zag binomial for a simple zig-zag pair of type 1.  
We call the graph $G_S$ a \emph{simple zig-zag obstruction of type 1}. 
We similarly define \emph{simple zig-zag obstructions of type 2}.
% A \emph{fundamental obstruction} is the graph $G_S$ where $S$ is the variables in the leading term of a fundamental binomial (i.e., the graphs depicted in \eqref{eqn: fundamental subgraphs}). 

\begin{example}
    \label{ex: simple zig-zags}
The zig-zag obstructions in the first and third graphs in Figure~\ref{fig: zig-zag obstructions} are both simple (of type 1 and 2, respectively).  
On the other hand, the zig-zag obstruction in the second graph is not simple, but contains a simple zig-zag obstruction of type 2 as a subgraph.  
This obstruction is given by deleting the first of the two directed edges from the graph.  
Such subgraph inclusions correspond to the divisibility of leading terms as seen in the proof of Lemma~\ref{lem: simple zig-zags}.
\end{example}

Since $T$ is a tree on vertex set $[n+1]$, the dimension of $\mathcal{C}_T$ is $|V| + |E| -1 = 2n$.  
By Lemma~\ref{lem: simple zig-zags}, a characterization of the facets of the triangulation $\mathcal{T}$ of $\mathcal{C}_T$ given by the specified good term order consists of  all subsets $S$ of the variables generating $R_T$ with $|S| = 2n+1$ for which the graph $G_S$ contains no fundamental obstructions and no simple zig-zag obstructions.

In the following, we say that two nodes $i,j$ in a undirected graph $G = (V,E)$ are \emph{connected given a subset} $C\subset V$ if there is a path $\pi = \{i_1i_2,\ldots, i_{k-1}i_k\}$ in $G$ such that $i_1 = i$, $i_k = j$ and $i_2,\ldots, i_{k-1}\notin C$.  
We say a subset of vertices $B$ of $G$ is \emph{maximally connected given} $C$ if all vertices in $B$ are connected given $C$ and there is no pair of vertices $i\in B$ and $j\notin B$ such that $i$ and $j$ are connected given $C$. 
For a tree $T$ and subset $S$ of variables in $R_T$, we let $\overline{G}_{S,1},\ldots, \overline{G}_{S,M}$ denote the induced subgraphs of $G_S$ on the maximally connected subsets of $T$ given $\mathfrak{Z}_S$. 
We call the collection of graphs $\overline{G}_{S,1},\ldots, \overline{G}_{S,M}$ the \emph{$Z_S$-components} of $G_S$.

% Given a subset $S$ of the generators of $R_T$, let $\widetilde{G}_S$ denote that subgraph of $G_S$ given by deleting all vertices in $\mathfrak{Z}_S = \{ i \in V(T) : z_i \in Z_S\}$. %$Z_S := S\cap Z$ where $Z := \{z_i : i\in[n+1]\}$. 
% % We also let $\mathfrak{Z}_S = \{ i\in[n+1] : z_i\in Z_S\}$. 
% We denote the connected components of $\widetilde{G}_S$ as $G_{S,1},\ldots, G_{S,M}$. 
% For each $i\in[M]$, let $\overline{G}_{S,i}$ denote the induced subgraph of $G_S$ on the collection of vertices
% \[
% V(G_{S,i})\cup \{j \in [n+1] : j \mbox{ adjacent to a node in $V(G_{S,i})$ and } j\in \mathfrak{Z}_S\}.
% \]
% % {\color{blue} 
% %  Lo:The two ``$i$" in the set above are distinct, right? Also I think we should say ``$z_i$ adjacent to a node in..."}
% %  \textcolor{red}{L: Yes, the $i$ for the node is distinct from the subscript $i$ in $G_{S,i}$ if thats what you are referring to.  This is a notational mistake that undoubtedly I made throughout and we will have to fix.  Sorry about that. Was writing too technical of stuff too fast.}

Recall from Proposition~\ref{prop: connected} that the support graph of $G_S$ for $S$ a facet of the triangulation of $\mathcal{C}_T$ is the tree $T$. 
Hence, the support graph of each $\overline{G}_{S,j}$ is a subtree of $T$. 
In the following, we will want to refer to certain subgraphs of $\overline{G}_{S,j}$ that are induced subgraphs of $\overline{G}_{S,j}$ on the vertex set of the corresponding induced subgraphs of $T$.  
For instance, although a vertex $i$ in $\overline{G}_{S,j}$ may have degree greater than $1$, we will call it a \emph{leaf node} of $\overline{G}_{S,j}$ if it is a leaf node in the support graph of $\overline{G}_{S,j}$.  
Similarly, we may refer to a subgraph of $\overline{G}_{S,j}$ as a \emph{path} in $\overline{G}_{S,j}$ if it is the induced subgraph of $\overline{G}_{S,j}$ on the node set of a path in its support graph, despite the fact that it may include multiple edges between the same pair of vertices.
This mild abuse of terminology should, however, be clear from context.
As a first example, we call the graph $\overline{G}_{S,j}$ \emph{$Z_S$-bounded} if all leaf nodes of $\overline{G}_{S,j}$ are in $\mathfrak{Z}_S$.  
% {\color{blue} Lo: $\overline{G}_{S,i}$ can have double edges. If a vertex is incident only to a double edge do you call it a leaf?}  
% \textcolor{red}{L: Another good question.  When writing hastily, I refer to the graphs $G_{S,i}$ as paths or trees sometimes, and also refer to their "leaf nodes." This stuff doesn't always make sense as the graph has multiple edges.  When I'm doing this, I'm referring to the induced subgraph of the tree $T$ on the node set of $G_{S,i}$.  This is another technical point that will have to be clarified somewhere.}
Otherwise, we call it \emph{$Z_S$-unbounded}.

\begin{example}
\label{ex: Z_S-components}
\begin{figure}
     \centering
     \begin{subfigure}[b]{0.45\textwidth}
         \centering
         \begin{tikzpicture}[thick,scale=0.4]
        \tikzset{decoration={snake,amplitude=.4mm,segment length=2mm, post length=0mm,pre length=0mm}}
	
	%---NODES---	 
        \node[circle, draw = black!100, fill=black!100, inner sep=2pt, minimum width=2pt] (1) at (0,0) {};
        \node[circle, draw = black!100, fill=black!100, inner sep=2pt, minimum width=2pt] (2) at (2,0) {};
        \node[circle, draw = black!100, fill=black!100, inner sep=2pt, minimum width=2pt] (3) at (4,2) {};
        \node[circle, draw = black!100, fill=black!100, inner sep=2pt, minimum width=2pt] (4) at (4,0) {};
        \node[circle, draw = black!100, fill=black!100, inner sep=2pt, minimum width=2pt] (5) at (6,4) {};
        \node[circle, draw = black!100, fill=black!100, inner sep=2pt, minimum width=2pt] (6) at (6,2) {};
        \node[circle, draw = black!100, fill=black!100, inner sep=2pt, minimum width=2pt] (7) at (6,0) {};
        \node[circle, draw = black!100, fill=black!100, inner sep=2pt, minimum width=2pt] (8) at (6,-2) {};
        \node[circle, draw = black!100, fill=black!100, inner sep=2pt, minimum width=2pt] (9) at (8,6) {};
        \node[circle, draw = black!100, fill=black!100, inner sep=2pt, minimum width=2pt] (10) at (8,4) {};
        \node[circle, draw = black!100, fill=black!100, inner sep=2pt, minimum width=2pt] (11) at (8,2) {};
        \node[circle, draw = black!100, fill=black!100, inner sep=2pt, minimum width=2pt] (12) at (8,0) {};
        \node[circle, draw = black!100, fill=black!100, inner sep=2pt, minimum width=2pt] (13) at (8,-2) {};

        \node[circle, draw = black!100, fill=black!100, inner sep=2pt, minimum width=2pt] (14) at (10,2) {};
        \node[circle, draw = black!100, fill=black!100, inner sep=2pt, minimum width=2pt] (15) at (10,0) {};
        \node[circle, draw = black!100, fill=black!100, inner sep=2pt, minimum width=2pt] (16) at (10,-2) {};

        \node[circle, draw = black!100, fill=black!100, inner sep=2pt, minimum width=2pt] (17) at (12,0) {};
        \node[circle, draw = black!100, fill=black!100, inner sep=2pt, minimum width=2pt] (18) at (12,-2) {};

        \node[circle, draw = black!100, fill=black!100, inner sep=2pt, minimum width=2pt] (19) at (14,2) {};
        \node[circle, draw = black!100, fill=black!100, inner sep=2pt, minimum width=2pt] (20) at (14,-2) {};

        \node[circle, draw = black!100, fill=black!100, inner sep=2pt, minimum width=2pt] (21) at (16,4) {};
        \node[circle, draw = black!100, fill=black!100, inner sep=2pt, minimum width=2pt] (22) at (16,2) {};
        \node[circle, draw = black!100, fill=black!100, inner sep=2pt, minimum width=2pt] (23) at (16,0) {};

        \node[circle, draw = black!100, fill=black!100, inner sep=2pt, minimum width=2pt] (24) at (16,-2) {};
        \node[circle, draw = black!100, fill=black!100, inner sep=2pt, minimum width=2pt] (25) at (16,-4) {};

	%---EDGES---	 
	\draw[-]   (1) edge (2) ;
        \draw[-]   (2) edge (3) ;
        \draw[-]   (2) edge (4) ;
        \draw[-]   (3) edge (5) ;
        \draw[-]   (4) edge (6) ;
        \draw[-]   (4) edge (7) ;
        \draw[-]   (4) edge (8) ;
        \draw[-]   (5) edge (9) ;
        \draw[-]   (6) edge (10) ;
        \draw[-]   (7) edge (11) ;
        \draw[-]   (7) edge (12) ;
        \draw[-]   (8) edge (13) ;
        \draw[-]   (12) edge (14) ;
        \draw[-]   (12) edge (15) ;
        \draw[-]   (13) edge (16) ;

        \draw[-]   (16) edge (17) ;
        \draw[-]   (16) edge (18) ;

        \draw[-]   (17) edge (19) ;
        \draw[-]   (18) edge (20) ;

        \draw[-]   (19) edge (21) ;
        \draw[-]   (19) edge (22) ;
        \draw[-]   (19) edge (23) ;

        \draw[-]   (20) edge (24) ;
        \draw[-]   (20) edge (25) ;
	 
	%---LABELS---
	\node at (0 + 0.5,0 - 0.5) {\footnotesize $1$} ;
        \node at (2 + 0.5,0 - 0.5) {\footnotesize $2$} ;
        \node at (4 + 0.5,2 - 0.5) {\footnotesize $3$} ;
        \node at (4 + 0.5,0 - 0.5) {\footnotesize $4$} ;
        \node at (6 + 0.5,4 - 0.5) {\footnotesize $5$} ;
        \node at (6 + 0.5,2 - 0.5) {\footnotesize $6$} ;
        \node at (6 + 0.5,0 - 0.5) {\footnotesize $7$} ;
        \node at (6 + 0.5,-2 - 0.5) {\footnotesize $8$} ;
        \node at (8 + 0.5,6 - 0.5) {\footnotesize $9$} ;
        \node at (8 + 0.5,4 - 0.5) {\footnotesize $10$} ;
        \node at (8 + 0.5,2 - 0.5) {\footnotesize $11$} ;
        \node at (8 + 0.5,0 - 0.5) {\footnotesize $12$} ;
        \node at (8 + 0.5,-2 - 0.5) {\footnotesize $13$} ;
        \node at (10 + 0.5,2 - 0.5) {\footnotesize $14$} ;
        \node at (10 + 0.5,0 - 0.5) {\footnotesize $15$} ;
        \node at (10 + 0.5,-2 - 0.5) {\footnotesize $16$} ;
        \node at (12 + 0.5,0 - 0.5) {\footnotesize $17$} ;
        \node at (12 + 0.5,-2 - 0.5) {\footnotesize $18$} ;
        \node at (14 + 0,2 - 0.75) {\footnotesize $19$} ;
        \node at (14 + 0,-2 - 0.75) {\footnotesize $20$} ;
        \node at (16 + 0.5,4 - 0.5) {\footnotesize $21$} ;
        \node at (16 + 0.5,2 - 0.5) {\footnotesize $22$} ;
        \node at (16 + 0.5,0 - 0.5) {\footnotesize $23$} ;
        \node at (16 + 0.5,-2 - 0.5) {\footnotesize $24$} ;
        \node at (16 + 0.5,-4 - 0.5) {\footnotesize $25$} ;
	% \node at (2,-1) {\footnotesize $i+1$} ;
	  	
    \end{tikzpicture}
         \caption{A tree $T$.}
         \label{fig: tree}
     \end{subfigure}
     \hfill
     \begin{subfigure}[b]{0.45\textwidth}
         \centering
         \begin{tikzpicture}[thick,scale=0.4]
        \tikzset{decoration={snake,amplitude=.4mm,segment length=2mm, post length=0mm,pre length=0mm}}
	
	%---NODES---	 
        \node[circle, draw = black!100, fill=black!100, inner sep=2pt, minimum width=2pt] (1) at (0,0) {};
        \node[circle, draw = black!100, fill=black!100, inner sep=2pt, minimum width=2pt] (2) at (2,0) {};
        \node[circle, draw = black!100, fill=black!100, inner sep=2pt, minimum width=2pt] (3) at (4,2) {};
        \node[circle, draw = black!100, fill=black!100, inner sep=2pt, minimum width=2pt] (4) at (4,0) {};
        \node[circle, draw = black!100, fill=black!100, inner sep=2pt, minimum width=2pt] (5) at (6,4) {};
        \node[circle, draw = black!100, fill=black!00, inner sep=2pt, minimum width=2pt] (6) at (6,2) {};
        \node[circle, draw = black!100, fill=black!00, inner sep=2pt, minimum width=2pt] (7) at (6,0) {};
        \node[circle, draw = black!100, fill=black!100, inner sep=2pt, minimum width=2pt] (8) at (6,-2) {};
        \node[circle, draw = black!100, fill=black!100, inner sep=2pt, minimum width=2pt] (9) at (8,6) {};
        \node[circle, draw = black!100, fill=black!00, inner sep=2pt, minimum width=2pt] (10) at (8,4) {};
        \node[circle, draw = black!100, fill=black!00, inner sep=2pt, minimum width=2pt] (11) at (8,2) {};
        \node[circle, draw = black!100, fill=black!100, inner sep=2pt, minimum width=2pt] (12) at (8,0) {};
        \node[circle, draw = black!100, fill=black!00, inner sep=2pt, minimum width=2pt] (13) at (8,-2) {};

        \node[circle, draw = black!100, fill=black!00, inner sep=2pt, minimum width=2pt] (14) at (10,2) {};
        \node[circle, draw = black!100, fill=black!100, inner sep=2pt, minimum width=2pt] (15) at (10,0) {};
        \node[circle, draw = black!100, fill=black!100, inner sep=2pt, minimum width=2pt] (16) at (10,-2) {};

        \node[circle, draw = black!100, fill=black!100, inner sep=2pt, minimum width=2pt] (17) at (12,0) {};
        \node[circle, draw = black!100, fill=black!100, inner sep=2pt, minimum width=2pt] (18) at (12,-2) {};

        \node[circle, draw = black!100, fill=black!100, inner sep=2pt, minimum width=2pt] (19) at (14,2) {};
        \node[circle, draw = black!100, fill=black!100, inner sep=2pt, minimum width=2pt] (20) at (14,-2) {};

        \node[circle, draw = black!100, fill=black!00, inner sep=2pt, minimum width=2pt] (21) at (16,4) {};
        \node[circle, draw = black!100, fill=black!00, inner sep=2pt, minimum width=2pt] (22) at (16,2) {};
        \node[circle, draw = black!100, fill=black!00, inner sep=2pt, minimum width=2pt] (23) at (16,0) {};

        \node[circle, draw = black!100, fill=black!00, inner sep=2pt, minimum width=2pt] (24) at (16,-2) {};
        \node[circle, draw = black!100, fill=black!00, inner sep=2pt, minimum width=2pt] (25) at (16,-4) {};

	%---EDGES---	 
	\draw[-]   (1) edge (2) ;
        \draw[<-]   (1) edge[bend left] (2) ;
        \draw[-]   (2) edge (3) ;
        \draw[<-]   (2) edge[bend left] (3) ;
        \draw[-]   (2) edge (4) ;
        \draw[<-]   (2) edge[bend right] (4) ;
        \draw[-]   (3) edge (5) ;
        \draw[<-]   (3) edge[bend left] (5) ;
        \draw[<-]   (4) edge (6) ;
        \draw[-]   (4) edge (7) ;
        \draw[<-]   (4) edge[bend right] (7) ;
        \draw[decorate]   (4) -- (8) ;
        \draw[-]   (5) edge (9) ;
        \draw[->]   (5) edge[bend left] (9) ;
        \draw[decorate]   (6) -- (10) ;
        \draw[-]   (7) edge (11) ;
        \draw[decorate]   (7) -- (12) ;
        \draw[-]   (8) edge (13) ;
        \draw[<-]   (8) edge[bend right] (13) ;
        \draw[-]   (12) edge (14) ;
        \draw[<-]   (12) edge[bend left] (14) ;
        \draw[-]   (12) edge (15) ;
        \draw[->]   (12) edge[bend right] (15) ;
        \draw[-]   (13) edge (16) ;

        \draw[-]   (16) edge (17) ;
        \draw[<-]   (16) edge[bend left] (17) ;
        \draw[-]   (16) edge (18) ;
        \draw[->]   (16) edge[bend right] (18) ;

        \draw[-]   (17) edge (19) ;
        \draw[<-]   (17) edge[bend right] (19) ;
        \draw[-]   (18) edge (20) ;
        \draw[<-]   (18) edge[bend right] (20) ;

        \draw[decorate]   (19) -- (21) ;
        \draw[-]   (19) edge (22) ;
        \draw[<-]   (19) edge[bend left] (22) ;
        \draw[-]   (19) edge (23) ;

        \draw[<-]   (20) edge (24) ;
        \draw[decorate]   (20) -- (25) ;
	 
	%---LABELS---
	% \node at (0,-1) {\footnotesize $i$} ;
	% \node at (2,-1) {\footnotesize $i+1$} ;

    \end{tikzpicture}
         \caption{The graph $G_S$.}
         \label{fig: G_S}
     \end{subfigure}
        \caption{A tree $T$ and the graph $G_S$ for a subset $S$ of the variables in the ring $R_T$.}
        \label{fig: Z_S components ex}
\end{figure}
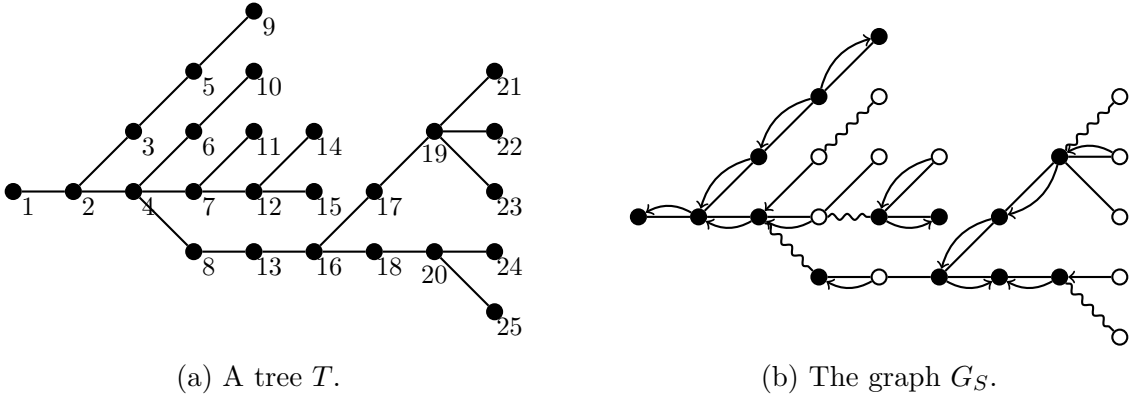
Consider the tree $T$ and the graph $G_S$ depicted in Figure~\ref{fig: tree} and Figure~\ref{fig: G_S}, respectively. For $G_S$, we have that
\[
\mathfrak{Z}_S = \{6,7,10,11,13,14,21,22,23,24,25\}.
\]
The natural order on the vertex set is taken for $<_r$, where $r = 1$, under which we see that $G_S$ contains no fundamental obstructions and no simple zig-zag obstructions.  
From $G_S$ we also see that $|S| = 2n+1$, where $n = 24$, and so it follows that $S$ is a facet of the triangulation $\mathcal{T}$. 

\begin{figure}
     \centering
     \begin{tikzpicture}[thick,scale=0.4]
        \tikzset{decoration={snake,amplitude=.4mm,segment length=2mm, post length=0mm,pre length=0mm}}
	
	%---NODES---	 
        \node[circle, draw = black!100, fill=black!100, inner sep=2pt, minimum width=2pt] (1) at (0,0) {};
        \node[circle, draw = black!100, fill=black!100, inner sep=2pt, minimum width=2pt] (2) at (2,0) {};
        \node[circle, draw = black!100, fill=black!100, inner sep=2pt, minimum width=2pt] (3) at (4,2) {};
        \node[circle, draw = black!100, fill=black!100, inner sep=2pt, minimum width=2pt] (4) at (4,0) {};
        \node[circle, draw = black!100, fill=black!100, inner sep=2pt, minimum width=2pt] (5) at (6,4) {};
        \node[circle, draw = black!100, fill=black!00, inner sep=2pt, minimum width=2pt] (6) at (6,2) {};
        \node[circle, draw = black!100, fill=black!00, inner sep=2pt, minimum width=2pt] (7) at (6,0) {};
        \node[circle, draw = black!100, fill=black!100, inner sep=2pt, minimum width=2pt] (8) at (6,-2) {};
        \node[circle, draw = black!100, fill=black!100, inner sep=2pt, minimum width=2pt] (9) at (8,6) {};
%        \node[circle, draw = black!100, fill=black!00, inner sep=2pt, minimum width=2pt] (10) at (8,4) {};
%        \node[circle, draw = black!100, fill=black!00, inner sep=2pt, minimum width=2pt] (11) at (8,2) {};
%        \node[circle, draw = black!100, fill=black!100, inner sep=2pt, minimum width=2pt] (12) at (8,0) {};
        \node[circle, draw = black!100, fill=black!00, inner sep=2pt, minimum width=2pt] (13) at (8,-2) {};

%        \node[circle, draw = black!100, fill=black!00, inner sep=2pt, minimum width=2pt] (14) at (10,2) {};
%        \node[circle, draw = black!100, fill=black!100, inner sep=2pt, minimum width=2pt] (15) at (10,0) {};
%        \node[circle, draw = black!100, fill=black!100, inner sep=2pt, minimum width=2pt] (16) at (10,-2) {};
%
%        \node[circle, draw = black!100, fill=black!100, inner sep=2pt, minimum width=2pt] (17) at (12,0) {};
%        \node[circle, draw = black!100, fill=black!100, inner sep=2pt, minimum width=2pt] (18) at (12,-2) {};
%
%        \node[circle, draw = black!100, fill=black!100, inner sep=2pt, minimum width=2pt] (19) at (14,2) {};
%        \node[circle, draw = black!100, fill=black!100, inner sep=2pt, minimum width=2pt] (20) at (14,-2) {};
%
%        \node[circle, draw = black!100, fill=black!00, inner sep=2pt, minimum width=2pt] (21) at (16,4) {};
%        \node[circle, draw = black!100, fill=black!00, inner sep=2pt, minimum width=2pt] (22) at (16,2) {};
%        \node[circle, draw = black!100, fill=black!00, inner sep=2pt, minimum width=2pt] (23) at (16,0) {};
%
%        \node[circle, draw = black!100, fill=black!00, inner sep=2pt, minimum width=2pt] (24) at (16,-2) {};
%        \node[circle, draw = black!100, fill=black!00, inner sep=2pt, minimum width=2pt] (25) at (16,-4) {};

	%---EDGES---	 
	\draw[-]   (1) edge (2) ;
        \draw[<-]   (1) edge[bend left] (2) ;
        \draw[-]   (2) edge (3) ;
        \draw[<-]   (2) edge[bend left] (3) ;
        \draw[-]   (2) edge (4) ;
        \draw[<-]   (2) edge[bend right] (4) ;
        \draw[-]   (3) edge (5) ;
        \draw[<-]   (3) edge[bend left] (5) ;
        \draw[<-]   (4) edge (6) ;
        \draw[-]   (4) -- (7) ;
        \draw[<-]   (4) edge[bend right] (7) ;
        \draw[decorate]   (4) -- (8) ;
        \draw[-]   (5) edge (9) ;
        \draw[->]   (5) edge[bend left] (9) ;
%        \draw[decorate]   (6) -- (10) ;
%        \draw[-]   (7) edge (11) ;
%        \draw[decorate]   (7) -- (12) ;
        \draw[-]   (8) edge (13) ;
        \draw[<-]   (8) edge[bend right] (13) ;
%        \draw[-]   (12) edge (14) ;
%        \draw[<-]   (12) edge[bend left] (14) ;
%        \draw[-]   (12) edge (15) ;
%        \draw[->]   (12) edge[bend right] (15) ;
%        \draw[-]   (13) edge (16) ;
%
%        \draw[-]   (16) edge (17) ;
%        \draw[<-]   (16) edge[bend left] (17) ;
%        \draw[-]   (16) edge (18) ;
%        \draw[->]   (16) edge[bend right] (18) ;
%
%        \draw[-]   (17) edge (19) ;
%        \draw[<-]   (17) edge[bend right] (19) ;
%        \draw[-]   (18) edge (20) ;
%        \draw[<-]   (18) edge[bend right] (20) ;
%
%        \draw[decorate]   (19) -- (21) ;
%        \draw[-]   (19) edge (22) ;
%        \draw[<-]   (19) edge[bend left] (22) ;
%        \draw[-]   (19) edge (23) ;
%
%        \draw[<-]   (20) edge (24) ;
%        \draw[decorate]   (20) -- (25) ;
	 
	%---LABELS---
	% \node at (0,-1) {\footnotesize $i$} ;
	% \node at (2,-1) {\footnotesize $i+1$} ;

	%---Z-COMPONENT 2----
	%---NODES---	 
%        \node[circle, draw = black!100, fill=black!100, inner sep=2pt, minimum width=2pt] (1) at (0,0) {};
%        \node[circle, draw = black!100, fill=black!100, inner sep=2pt, minimum width=2pt] (2) at (2,0) {};
%        \node[circle, draw = black!100, fill=black!100, inner sep=2pt, minimum width=2pt] (3) at (4,2) {};
%        \node[circle, draw = black!100, fill=black!100, inner sep=2pt, minimum width=2pt] (4) at (4,0) {};
%        \node[circle, draw = black!100, fill=black!100, inner sep=2pt, minimum width=2pt] (5) at (6,4) {};
%        \node[circle, draw = black!100, fill=black!00, inner sep=2pt, minimum width=2pt] (6) at (6,2) {};
        \node[circle, draw = black!100, fill=black!00, inner sep=2pt, minimum width=2pt] (7) at (6+4,0) {};
%        \node[circle, draw = black!100, fill=black!100, inner sep=2pt, minimum width=2pt] (8) at (6,-2) {};
%        \node[circle, draw = black!100, fill=black!100, inner sep=2pt, minimum width=2pt] (9) at (8,6) {};
%        \node[circle, draw = black!100, fill=black!00, inner sep=2pt, minimum width=2pt] (10) at (8,4) {};
%        \node[circle, draw = black!100, fill=black!00, inner sep=2pt, minimum width=2pt] (11) at (8+4,2) {};
        \node[circle, draw = black!100, fill=black!100, inner sep=2pt, minimum width=2pt] (12) at (8+4,0) {};
%        \node[circle, draw = black!100, fill=black!00, inner sep=2pt, minimum width=2pt] (13) at (8,-2) {};

        \node[circle, draw = black!100, fill=black!00, inner sep=2pt, minimum width=2pt] (14) at (10+4,2) {};
        \node[circle, draw = black!100, fill=black!100, inner sep=2pt, minimum width=2pt] (15) at (10+4,0) {};
        \draw[decorate]   (7) -- (12) ;
%        \draw[-]   (8) edge (13) ;
%        \draw[<-]   (8) edge[bend right] (13) ;
        \draw[-]   (12) edge (14) ;
        \draw[<-]   (12) edge[bend left] (14) ;
        \draw[-]   (12) edge (15) ;
        \draw[->]   (12) edge[bend right] (15) ;
%        \draw[-]   (13) edge (16) ;
%
%        \draw[-]   (16) edge (17) ;
%        \draw[<-]   (16) edge[bend left] (17) ;
%        \draw[-]   (16) edge (18) ;
%        \draw[->]   (16) edge[bend right] (18) ;
%
%        \draw[-]   (17) edge (19) ;
%        \draw[<-]   (17) edge[bend right] (19) ;
%        \draw[-]   (18) edge (20) ;
%        \draw[<-]   (18) edge[bend right] (20) ;
%
%        \draw[decorate]   (19) -- (21) ;
%        \draw[-]   (19) edge (22) ;
%        \draw[<-]   (19) edge[bend left] (22) ;
%        \draw[-]   (19) edge (23) ;
%
%        \draw[<-]   (20) edge (24) ;
%        \draw[decorate]   (20) -- (25) ;

    \end{tikzpicture}\hspace{1cm}
    \begin{tikzpicture}[thick,scale=0.4]
        \tikzset{decoration={snake,amplitude=.4mm,segment length=2mm, post length=0mm,pre length=0mm}}
	
	%---NODES---	 
%        \node[circle, draw = black!100, fill=black!100, inner sep=2pt, minimum width=2pt] (1) at (0,0) {};
%        \node[circle, draw = black!100, fill=black!100, inner sep=2pt, minimum width=2pt] (2) at (2,0) {};
%        \node[circle, draw = black!100, fill=black!100, inner sep=2pt, minimum width=2pt] (3) at (4,2) {};
%        \node[circle, draw = black!100, fill=black!100, inner sep=2pt, minimum width=2pt] (4) at (4,0) {};
%        \node[circle, draw = black!100, fill=black!100, inner sep=2pt, minimum width=2pt] (5) at (6,4) {};
        \node[circle, draw = black!100, fill=black!00, inner sep=2pt, minimum width=2pt] (6) at (6,2) {};
        \node[circle, draw = black!100, fill=black!00, inner sep=2pt, minimum width=2pt] (7) at (6,0) {};
%        \node[circle, draw = black!100, fill=black!100, inner sep=2pt, minimum width=2pt] (8) at (6,-2) {};
%        \node[circle, draw = black!100, fill=black!100, inner sep=2pt, minimum width=2pt] (9) at (8,6) {};
        \node[circle, draw = black!100, fill=black!00, inner sep=2pt, minimum width=2pt] (10) at (8,4) {};
        \node[circle, draw = black!100, fill=black!00, inner sep=2pt, minimum width=2pt] (11) at (8,2) {};
%        \node[circle, draw = black!100, fill=black!100, inner sep=2pt, minimum width=2pt] (12) at (8,0) {};
        \node[circle, draw = black!100, fill=black!00, inner sep=2pt, minimum width=2pt] (13) at (8,-2) {};

%        \node[circle, draw = black!100, fill=black!00, inner sep=2pt, minimum width=2pt] (14) at (10,2) {};
%        \node[circle, draw = black!100, fill=black!100, inner sep=2pt, minimum width=2pt] (15) at (10,0) {};
        \node[circle, draw = black!100, fill=black!100, inner sep=2pt, minimum width=2pt] (16) at (10,-2) {};

        \node[circle, draw = black!100, fill=black!100, inner sep=2pt, minimum width=2pt] (17) at (12,0) {};
        \node[circle, draw = black!100, fill=black!100, inner sep=2pt, minimum width=2pt] (18) at (12,-2) {};

        \node[circle, draw = black!100, fill=black!100, inner sep=2pt, minimum width=2pt] (19) at (14,2) {};
        \node[circle, draw = black!100, fill=black!100, inner sep=2pt, minimum width=2pt] (20) at (14,-2) {};

        \node[circle, draw = black!100, fill=black!00, inner sep=2pt, minimum width=2pt] (21) at (16,4) {};
        \node[circle, draw = black!100, fill=black!00, inner sep=2pt, minimum width=2pt] (22) at (16,2) {};
        \node[circle, draw = black!100, fill=black!00, inner sep=2pt, minimum width=2pt] (23) at (16,0) {};

        \node[circle, draw = black!100, fill=black!00, inner sep=2pt, minimum width=2pt] (24) at (16,-2) {};
        \node[circle, draw = black!100, fill=black!00, inner sep=2pt, minimum width=2pt] (25) at (16,-4) {};

	%---EDGES---	 
%	\draw[-]   (1) edge (2) ;
%        \draw[<-]   (1) edge[bend left] (2) ;
%        \draw[-]   (2) edge (3) ;
%        \draw[<-]   (2) edge[bend left] (3) ;
%        \draw[-]   (2) edge (4) ;
%        \draw[->]   (2) edge[bend right] (4) ;
%        \draw[-]   (3) edge (5) ;
%        \draw[<-]   (3) edge[bend left] (5) ;
%        \draw[<-]   (4) edge (6) ;
%        \draw[decorate]   (4) -- (7) ;
%        \draw[<-]   (4) edge (8) ;
%        \draw[-]   (5) edge (9) ;
%        \draw[->]   (5) edge[bend left] (9) ;
        \draw[decorate]   (6) -- (10) ;
        \draw[-]   (7) edge (11) ;
%        \draw[decorate]   (7) -- (12) ;
%        \draw[-]   (8) edge (13) ;
%        \draw[<-]   (8) edge[bend right] (13) ;
%        \draw[-]   (12) edge (14) ;
%        \draw[<-]   (12) edge[bend left] (14) ;
%        \draw[-]   (12) edge (15) ;
%        \draw[->]   (12) edge[bend right] (15) ;
        \draw[-]   (13) edge (16) ;

        \draw[-]   (16) edge (17) ;
        \draw[<-]   (16) edge[bend left] (17) ;
        \draw[-]   (16) edge (18) ;
        \draw[->]   (16) edge[bend right] (18) ;

        \draw[-]   (17) edge (19) ;
        \draw[<-]   (17) edge[bend right] (19) ;
        \draw[-]   (18) edge (20) ;
        \draw[<-]   (18) edge[bend right] (20) ;

        \draw[decorate]   (19) -- (21) ;
        \draw[-]   (19) edge (22) ;
        \draw[<-]   (19) edge[bend left] (22) ;
        \draw[-]   (19) edge (23) ;

        \draw[<-]   (20) edge (24) ;
        \draw[decorate]   (20) -- (25) ;
	 
	%---LABELS---
	% \node at (0,-1) {\footnotesize $i$} ;
	% \node at (2,-1) {\footnotesize $i+1$} ;

    \end{tikzpicture}
        \caption{The $Z_S$-components of the graph $G_S$ in Figure~\ref{fig: G_S}.}
        \label{fig: Z_S components of G_S}
\end{figure}
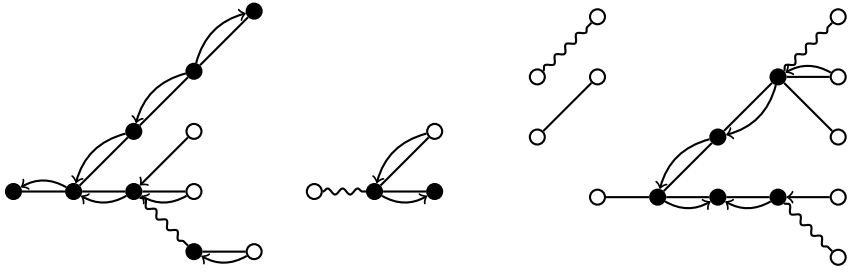
The graph $G_S$ has the $Z_S$-components depicted in Figure~\ref{fig: Z_S components of G_S}, in which the two leftmost graphs are $Z_S$-unbounded and the remaining ones are $Z_S$-bounded.
\end{example}

To analyze the graphs $\overline{G}_{S,j}$ it will be helpful to have a notion of separation of vertices by edges. 
Given a graph $G$ with vertex set $V$ and edge set $E$, we say that the subset of nodes $A\subset V$ are \emph{separated} by the subset of edges $B\subset E$ if for every pair of nodes $i,j\in A$ all paths in $G$ from $i$ to $j$ include an edge in $B$. 
Note that the set $B$ is a cut-set for which the associated cuts each contain a single vertex in $A$. 
The following lemma will be used.
%---LEMMA: Separating sets in trees---
\begin{lemma}
\label{lem: separating tree leaves}
Let $T$ be a tree with set of leaf nodes $A$.  
If the set $A$ is separated by $B$, then $|B| \geq |A|-1$. 
%Moreover, if $|B| = |A| - 1$ then removing a given edge from $B$ results in a unique pair of leaves not being separated. 
\end{lemma}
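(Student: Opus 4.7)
The plan is to prove this via a simple component-counting argument on the forest obtained by deleting $B$. The main idea is that removing edges from a tree splits it into connected components in a controlled way, and the separation hypothesis forces each component to contain at most one leaf from $A$.

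More precisely, I would first recall the standard fact that if $T$ is a tree with $|V|$ vertices (hence $|V|-1$ edges), then deleting any set of $|B|$ edges yields a forest with exactly $|B|+1$ connected components. Then I would argue that every connected component of the forest $T \setminus B$ contains at most one element of $A$. Indeed, suppose for contradiction that two distinct leaves $i,j \in A$ belong to the same component of $T \setminus B$. Then there exists a path in $T \setminus B$ from $i$ to $j$, which is in particular a path in $T$ between the two leaves that avoids every edge of $B$. This contradicts the assumption that $A$ is separated by $B$, since by the definition given just before the lemma, every path between two elements of $A$ must include an edge of $B$.

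Combining these two observations, $|A|$ is bounded above by the number of connected components of $T \setminus B$, so $|A| \le |B| + 1$, which gives $|B| \ge |A| - 1$ as required.

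I do not anticipate any real obstacle here: the argument is essentially one line once the right perspective (looking at the forest $T \setminus B$) is adopted, and uses only the tree property (deletion of $k$ edges produces exactly $k+1$ components) and the definition of separation. The only point deserving a brief sentence of justification is why two leaves in the same component of $T \setminus B$ would violate separation, which follows from the uniqueness of paths in trees.
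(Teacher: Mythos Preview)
Your proposal is correct and essentially identical to the paper's own proof: both delete the edges of $B$ from $T$, use that removing $|B|$ edges from a tree yields exactly $|B|+1$ components, and observe that the separation hypothesis forces each component to contain at most one leaf of $A$. Your phrasing is arguably a touch cleaner than the paper's, but the argument is the same.
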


\begin{proof}
    Note that if $B$ is a set of edges separating the leaf nodes $A$ of the tree $T$ then deleting the edges in $B$ from $T$ results in a forest with at least $|A|$ connected components such that no two leaves of $T$ are in the same component.  This is because removing a single edge from a forest always increases the number of connected components by exactly 1. Hence, to get $|A|$ connected components that each contain a unique leaf node we must remove $|A| - 1$ edges, which proves the desired lower bound.
\end{proof}

\begin{lemma}
    \label{lem: unique pair}
    Let $T$ be a tree with set of leaves $A$ and let $B$ be a set of edges in $T$ that separate $A$.  If $|B| = |A|-1$, then for each edge $e\in B$ there exists a unique pair of vertices $i,j\in A$ such that $B$ separates $i$ and $j$ but $B\setminus e$ does not. 
\end{lemma}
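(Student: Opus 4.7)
The plan is to analyze the forest $T\setminus B$ obtained by deleting the separating edges, show that it has exactly $|A|$ components each containing precisely one leaf of $T$, and then use this rigid structure to identify a canonical pair of leaves associated to each $e\in B$.

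First, I would establish the structural fact: since $T$ is a tree on some vertex set $V$ of size $|V|=|E(T)|+1$, deleting any set of $k$ edges leaves a forest with exactly $k+1$ connected components. Hence $T\setminus B$ is a forest with exactly $|A|$ components $C_1,\dots,C_{|A|}$. Because $B$ separates $A$, no two leaves of $T$ can share a component; since there are $|A|$ leaves distributed among $|A|$ components, each component contains exactly one leaf of $T$. Call the leaf in $C_t$ by $\ell_t$.

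Next, fix $e=uv\in B$ and let $C_a, C_b$ be the components of $T\setminus B$ containing $u$ and $v$ respectively (these are distinct, since otherwise $T$ would contain a cycle through $e$). Set $i=\ell_a$ and $j=\ell_b$. For existence, observe that removing $B\setminus\{e\}$ from $T$ merges precisely $C_a$ and $C_b$ along $e$, leaving every other $C_t$ intact. In particular, $i$ and $j$ lie in the same component of $T\setminus(B\setminus\{e\})$, so there is a path in $T$ from $i$ to $j$ avoiding $B\setminus\{e\}$; thus $B\setminus\{e\}$ does not separate $i$ and $j$, while $B$ does (being the larger set that still separates all of $A$).

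For uniqueness, suppose $i',j'\in A$ is another pair separated by $B$ but not by $B\setminus\{e\}$. Then $i'$ and $j'$ must lie in the same component of $T\setminus(B\setminus\{e\})$. The components of $T\setminus(B\setminus\{e\})$ are exactly $C_a\cup C_b\cup\{e\}$ together with all $C_t$ for $t\neq a,b$. Since $i',j'$ are distinct leaves of $T$ and each $C_t$ contains a unique leaf, $i'$ and $j'$ cannot both lie in a single $C_t$; hence they both lie in $C_a\cup C_b\cup\{e\}$, forcing $\{i',j'\}=\{\ell_a,\ell_b\}=\{i,j\}$.

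The argument is almost entirely bookkeeping once the component count is pinned down; the only subtle point is verifying that the $|A|$ components of $T\setminus B$ are in bijection with the leaves of $T$, which follows immediately from the minimality $|B|=|A|-1$ combined with the separation hypothesis. No nontrivial obstacle is expected.
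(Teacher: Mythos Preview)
Your proof is correct and, in fact, cleaner than the paper's. Both arguments rest on the same structural observation: with $|B|=|A|-1$, the forest $T\setminus B$ has exactly $|A|$ components, one per leaf. From there the paper proceeds by contradiction, assuming that removing $e$ from $B$ reconnects two distinct pairs of leaves and then locating an auxiliary edge $e'\in B$ on the path between two of those leaves to obtain the contradiction. You instead give a direct construction: identify the two components $C_a,C_b$ of $T\setminus B$ adjacent to $e$, take their unique leaves $\ell_a,\ell_b$, and observe that the components of $T\setminus(B\setminus\{e\})$ are exactly $C_a\cup\{e\}\cup C_b$ together with the remaining $C_t$'s, which immediately pins down both existence and uniqueness. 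Your route has the advantage of explicitly naming the critical pair rather than merely asserting its existence and then ruling out alternatives; the paper's route is slightly shorter but leaves the reader to reconstruct which pair is actually produced.
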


\begin{proof}
By Lemma~\ref{lem: separating tree leaves} the set $B$ is a minimal separating set for $A$. 
Hence, removing a single edge $e = st\in B$ from $B$ connects at least one pair of leaves of $T$. 
Suppose we connect two pairs of leaves, say $i,j$ and $k, \ell$. 
Then, without loss of generality, $i$ and $k$ and $j$ and $\ell$ are, respectively, on the same side of the single edge $e$; i.e., they are in the same connected component given by deleting the edge $e$. 
Say $j$ and $\ell$ are in the component containing $t$ and $i$ and $k$ are in the component containing $s$. 

Since $T$ is a tree there is a unique path between $i$ and $k$, and this path must be the concatenation of the paths between $i$ and $s$ and $k$ and $s$.  
Since this path does not contain $e$, it must contain another edge $e^\prime\in B$. 
Without loss of generality, suppose $e^\prime$ lies on the path between $i$ and $s$. 
This contradicts the assumption that removing $e$ from the separating set connects $i$ and $j$, which completes the proof. 
\end{proof}

We say that an edge $e$ \emph{critically separates} a pair of nodes $i$ and $j$ in a graph $G = (V,E)$ with respect to $B\subset E$  if $B$ separates $i$ and $j$ but $B\setminus e$ does not. 
Lemma~\ref{lem: unique pair} states that if $G$ is a tree with set of leaf nodes $A$ and $B$ separates $A$ with $|B| = |A| - 1$, then each edge in $B$ critically separates a unique pair of leaf nodes in $G$. 
For a fixed tree $T$, we let $m_j \coloneqq |V(\overline{G}_{S,j})\cap \mathfrak{Z}_S|$.  
The following generalizes Lemma~\ref{lem: facet means connected} to arbitrary trees.
%---LEMMA: facets means connected for trees---
\begin{lemma}
\label{lem: facets means connected for trees}
Let $T$ be a tree on node set $[n+1]$ and let $S$ be a facet of the triangulation of $\mathcal{C}_T$.   
% If $S$ is a facet of $\mathcal{T}$ then $G_S$ is connected. \textcolor{blue}{M: More generally, the support graph of $G_S$ is $G$ since otherwise we could add a wiggle edge.}
If $\mathfrak{Z}_S = \{i_1 <_r \cdots <_r i_{n+1-k}\}$ %and 
% \[
% Z_S=\{z_{i_\ell}~:~1\leq \ell \leq n+1-k\},
% \]
it follows that
\begin{enumerate}
    \item $G_S$ contains exactly $k$ double edges,
    \item All double edges are of the form
    \begin{center}
    \begin{tikzpicture}[thick,scale=0.5]
        \tikzset{decoration={snake,amplitude=.4mm,segment length=2mm, post length=0mm,pre length=0mm}}
	
	%---NODES---	 
        \node[circle, draw = black!00, fill=black!00, inner sep=2pt, minimum width=2pt] (1) at (0,0) {};
        \node[circle, draw = black!00, fill=black!00, inner sep=2pt, minimum width=2pt] (2) at (2,0) {};

        \node[circle, draw = black!00, fill=black!00, inner sep=2pt, minimum width=2pt] (3) at (10,0) {};
        \node[circle, draw = black!00, fill=black!00, inner sep=2pt, minimum width=2pt] (4) at (12,0) {};

	%---EDGES---	 
	\draw[->]   (1) edge[bend left] (2) ;
        \draw[-]    (1) -- (2) ;

        \draw[<-]   (3) edge[bend left] (4) ;
        \draw[-]    (3) -- (4) ;

        \node at (6,0) {or} ;
    \end{tikzpicture}
    \end{center}
    \item $\overline{G}_{S,j}$ contains exactly $m_j-1$ single edges for all $j\in[M]$. 
\end{enumerate}
\end{lemma}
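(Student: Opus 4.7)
The plan is to first dispatch (1) and (2) by a short variable count, and then reduce (3) to a per-component lower bound $s_j \geq m_j - 1$ (writing $s_j, d_j$ for the number of single and double edges of $\overline{G}_{S,j}$) that, combined with a global equality, forces equality in each component. For (1), Proposition~\ref{prop: connected} guarantees that the support graph of $S$ is all of $T$, so each of the $n$ edges of $T$ carries at least one variable in $S$, and Lemma~\ref{lem: only double} restricts every edge to being either single or double. Since $|S| = 2n+1$ and $|Z_S| = n+1-k$, the non-$Z$ variables in $S$ number $n+k$, forcing exactly $k$ doubles. Part (2) is then immediate from Lemma~\ref{lem: only double}.

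For (3), the crux is the claim ($\star$): in each $Z_S$-component $\overline{G}_{S,j}$, the single edges separate the $m_j$ open ($\circ$) vertices. Interior vertices of a $Z_S$-component cannot lie in $\mathfrak{Z}_S$ (by the definition of being connected given $\mathfrak{Z}_S$), so the $\circ$-vertices are all leaves of the support subtree, and a separating set for $m_j$ vertices in a tree has size at least $m_j - 1$ — equivalently, deleting $t$ edges from a tree yields $t+1$ components, cf.\ the proof of Lemma~\ref{lem: separating tree leaves}. Hence $(\star)$ would give $s_j \geq m_j - 1$. To upgrade to equality I would use a double-count: each edge of $T$ lies in exactly one component, and each $\circ$-vertex $v$ appears in exactly $\deg_T(v)$ components (distinct incident edges of $v$ land in distinct components, since the unique $T$-path between them passes through $v \in \mathfrak{Z}_S$). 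Combining $\sum_j (e_j+1) = n+M$ with $\sum_j |V(\overline{G}_{S,j})| = k + \sum_{v \in \mathfrak{Z}_S} \deg_T(v)$ then yields $\sum_j m_j = n+M-k$, hence $\sum_j (m_j-1) = n-k = \sum_j s_j$, so the local lower bounds must be tight.

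The main obstacle is $(\star)$, which I would prove by contradiction. Suppose two $\circ$-vertices $u, v$ of a common $Z_S$-component are joined by a path $u = w_0, w_1, \ldots, w_\ell = v$ consisting entirely of double edges; then $w_1, \ldots, w_{\ell-1}$ are all $\bullet$. The fundamental obstructions $y_{ije}z_j$ and $y_{jie}z_i$ (the subgraphs $\rightarrow\!\circ$ and $\circ\!\leftarrow$) force the arrow on the double edge incident to $u$ (resp.\ $v$) to point away from $u$ (resp.\ $v$); in particular the first arrow along the path is $\rightarrow$ and the last is $\leftarrow$. Let $E_1$ be the set of rightward arrows and $E_2$ the set of leftward arrows along the path; both are nonempty, so $(E_1,E_2)$ is a bona fide zig-zag pair. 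Because every edge of the path is doubled, it carries both its chosen $y$-variable and its $z_e$; together with $z_u, z_v \in S$, this shows that \emph{both} monomials of the zig-zag binomial $b_{E_1, E_2}$ are supported inside $S$. In particular, the leading term of $b_{E_1, E_2}$ (whichever monomial it is under the good term order) lies in $S$, contradicting that $S$ is a face of $\mathcal{T}$. This zig-zag-binomial trick replaces the more ad hoc ``partially directed path ending in $\circ$'' reasoning from the path case and works uniformly for any good term order on $R_T$.
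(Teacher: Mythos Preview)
Your argument is correct, and in two places it is genuinely different from the paper's proof.

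For the lower bound $(\star)$, the paper fixes the path $\pi$ between the two $\circ$-vertices, introduces $\alpha = \min_{<_r}\{i_1,\ldots,i_k\}$ and the subpaths $\pi_1,\pi_2$, and then argues by cases that some \emph{specific} simple zig-zag obstruction (of type~1 or type~2 depending on the orientation pattern) must appear, using the particular lexicographic order $<$ chosen in Section~\ref{sec: trees}.  Your argument sidesteps this case analysis entirely: since every edge on the path is doubled, it carries both its $z_e$ and its $y$-variable, and together with $z_u,z_v\in S$ this forces \emph{both} monomials of $b_{E_1,E_2}$ to have support contained in $S$.  Hence whichever monomial happens to be the leading term under the good order at hand is a non-face inside $S$.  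This is cleaner and, as you note, works uniformly for every good term order rather than only for the specific one fixed in this section.

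For the passage from the inequality $s_j\ge m_j-1$ to equality, the paper asserts that $\overline{G}_{S,j}$ is itself a facet of the triangulation of $\mathcal{C}_{T'}$ for the support subtree $T'$ and then invokes part~(1) on $T'$; but ``facet'' here means having the correct cardinality $2n_j-1$, which is equivalent to $d_j=n_j-m_j$, i.e.\ precisely the equality $s_j=m_j-1$ being proved, so that step is at best elliptical.  Your global double-count avoids this.  The key identities you use are that each edge of $T$ lies in exactly one $Z_S$-component, each $\bullet$-vertex in exactly one, and each $\circ$-vertex $v$ in exactly $\deg_T(v)$ components (distinct edges at $v$ fall into distinct components because the unique $T$-path joining their other endpoints passes through $v\in\mathfrak{Z}_S$).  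These give $\sum_j m_j = n+M-k$, hence $\sum_j(m_j-1)=n-k=\sum_j s_j$, and the local inequalities become equalities.  This is a tight, self-contained pigeonhole argument that parallels the one used for the path (Lemma~\ref{lem: facet means connected}) but now carried out at the level of components rather than intervals.
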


\begin{proof} 
The fact that (2) holds follows from the first four fundamental obstructions listed in \eqref{eqn: fundamental subgraphs}. 
Hence, all edges in $G_S$ are either single edges or double edges of the form 
\begin{center}
    \begin{tikzpicture}[thick,scale=0.5]
        \tikzset{decoration={snake,amplitude=.4mm,segment length=2mm, post length=0mm,pre length=0mm}}
	
	%---NODES---	 
        \node[circle, draw = black!00, fill=black!00, inner sep=2pt, minimum width=2pt] (1) at (0,0) {};
        \node[circle, draw = black!00, fill=black!00, inner sep=2pt, minimum width=2pt] (2) at (2,0) {};

        \node[circle, draw = black!00, fill=black!00, inner sep=2pt, minimum width=2pt] (3) at (10,0) {};
        \node[circle, draw = black!00, fill=black!00, inner sep=2pt, minimum width=2pt] (4) at (12,0) {};

	%---EDGES---	 
	\draw[->]   (1) edge[bend left] (2) ;
        \draw[-]    (1) -- (2) ;

        \draw[<-]   (3) edge[bend left] (4) ;
        \draw[-]    (3) -- (4) ;

        \node at (6,0) {or} ;
    \end{tikzpicture}
    \end{center}

We now show that (1) holds.  
Since $S$ is a facet and $\dim(\mathcal{C}_T) = |V(T)| + |E(T)| - 1 = 2n$, we have  $|S| = 2n+1$. 
Suppose now that $|\mathfrak{Z}_S| = n-k+1$.  
We know that the support graph of  $G_S$ equals $G$ by Proposition~\ref{prop: connected}, and hence $G_S$ contains at least $n$ edges.  
Since $|\mathfrak{Z}_S| + n = 2n-k +1$ and $|S| = 2n+1$, and every edge in $G_S$ is either a single edge or a double edge, it follows that $G_S$ contains exactly $k$ double edges. 

It remains to see that (3) holds. 
Consider first the case when $\overline{G}_{S,j}$ is $Z_S$-bounded.  
Let $n_j = |V(\overline{G}_{S,j})|$. 
We claim that there are at most $n_j - m_j$ double edges in $\overline{G}_{S,j}$; equivalently,  there are at least $m_j - 1$ single edges in $\overline{G}_{S,j}$.  
To see this, note that the single edges in $\overline{G}_{S,j}$ must separate all nodes in $V(\overline{G}_{S,j})\cap\mathfrak{Z}_S$.  
That is, the edges in the support graph of $\overline{G}_{S,j}$ corresponding to the single edges in $\overline{G}_{S,j}$ must separate the leaf nodes of the support graph. 
Otherwise, there is at least one simple zig-zag obstruction in $G_S$, which would contradict $S$ being a facet. 

To see this claim, assume otherwise.  
Then there is a pair of vertices $i_1,i_k\in V(\overline{G}_{S,j})\cap\mathfrak{Z}_S$ such that the unique path $\pi = \{i_1i_2,\ldots, i_{k-1}i_k\}$ in $T$ between $i_1$ and $i_k$ contains only double edges in $\overline{G}_{S,j}$.  
Suppose without loss of generality that $i_1 <_r i_k$.  
Since $i_1,i_k\in\mathfrak{Z}_S$, we know these nodes are $\circ$ nodes in $G_S$.  
Let $\alpha = \min_{<_r}\{i_1,\ldots, i_k\}$, and let $\pi_1$ and $\pi_2$ denote the subpaths of $\pi$ between $i_1$ and $\alpha$ and $i_k$ and $\alpha$, respectively. 
If there is a directed arrow pointing toward $i_1$ or $i_k$, respectively on $\pi_1$ or $\pi_2$ then $G_S$ contains a zig-zag obstruction of type $(1)$ or $(2)$, and hence contains a simple zig-zag obstruction of type 1.  
This, would contradict $S$ being a facet. 
Thus, all directed edges on $\pi$ must be directed toward $\alpha$.
However, this implies that $G_S$ contains a simple zig-zag obstruction of type 2, which is again a contradiction. 

Thus, no such paths of double edges exist, and we conclude that the single edges must separate the nodes in $V(\overline{G}_{S,j})\cap\mathfrak{Z}_S$.  
Since $T$ is a tree, we require at least $m_j - 1$ such single edges in $\overline{G}_{S,j}$ by Lemma~\ref{lem: separating tree leaves}.  
Similarly, if $\overline{G}_{S,j}$ is $Z_S$-unbounded, we can consider the induced subgraph of $\overline{G}_{S,j}$ by all nodes on the unique paths in $T$ between the vertices in $V(\overline{G}_{S,j})\cap\mathfrak{Z}_S$, and the same argument applies. 
Hence, there are at least $m_j-1$ single edges in $\overline{G}_{S,j}$ and at most $n_j - m_j$ double edges.

We now claim that there are exactly $m_j - 1$ single edges in $\overline{G}_{S,j}$. 
By our choice of Gr\"obner basis, the graph $\overline{G}_{S,j}$ also corresponds to a facet of the triangulation of the cosmological polytope of the support graph $T^\prime$ of $\overline{G}_{S,j}$.  
Since $T^\prime$ is a tree on $n_j$ vertices, and since $|V(\overline{G}_{S,j})\cap \mathfrak{Z}_S|$ contains $m_j$ vertices, we know from (1) that $\overline{G}_{S,j}$ must exactly contain $n_j - m_j$ double edges.
Equivalently, it must contain exactly $m_j-1$ single edges, which completes the proof. 
\end{proof}

In the proof of Lemma~\ref{lem: facets means connected for trees} we use the fact that the set of single edges in a $Z_S$-component $\overline{G}_{S,j}$ corresponds to a set of edges in the support graph of the component that separates its leaf nodes. 
In the following, we will simply say that a set of edges in a $Z_S$-component \emph{separates} a set of nodes $A$ if the corresponding edges in the support graph separate $A$. 

By definition, each of the $Z_S$-components $\overline{G}_{S,j}$ has a unique minimal node $r_{S,j}$ under the vertex ordering $<_r$.  
This node is the root of the induced subgraph of $\overrightarrow{T}$ on the vertex set $V(\overline{G}_{S,j})$.  
A root-to-leaf path in this subtree is a path connecting the root node $r_{S,j}$ to a leaf node $i$.  
We can consider the induced subgraph on the node set of such a path in the graph $\overline{G}_{S,j}$.  
For simplicity, we refer to such a subpath as a root-to-leaf path in $\overline{G}_{S,j}$, noting that it may contain multiple edges.  
When we refer to a leaf-to-root path we imagine reading such a root-to-leaf path backwards from the leaf to the root node. 

For each node $i\in V(\overline{G}_{S,j})\cap(\mathfrak{Z}_S\setminus\{r_{S,j}\})$ consider the first single edge encountered along the leaf-to-root path from $i$ to $r_{S,j}$ in $\overline{G}_{S,j}$.  
By Lemma~\ref{lem: unique pair}, this edge critically separates a unique pair of vertices $s,t\in V(\overline{G}_{S,j})\cap\mathfrak{Z}_S$ (with respect to the set of single edges in $\overline{G}_{S,j}$). 
% {\color{red} Lo: Don't you need to remove $r_{S,j}$ from $\mathfrak{Z}_S$ also in this case? Because otherwise I don't think the pair which gets separated is unique.}  
Moreover, it can be seen by applying a similar argument as in the proof of Lemma~\ref{lem: unique pair} that one of these two vertices must be $i$, say $s = i$. 
Let $\pi = \{i_1i_2,\ldots, i_{k-1}i_k\}$ denote the unique path in $T$ between $i$ and $t$, and let $\alpha = \min_{<_r}\{i_1,\ldots, i_k\}$. 
% As before, we let $\pi_1$ denote the subpath of $\pi$ between $i_1 = i$ and $\alpha$ and $\pi_2$ denote the subpath between $i_k = t$ and $\alpha$. 
We call the path between $\alpha$ and $i$ the \emph{threshold path for $i$}.
If we take $i = i_k$ and $\alpha = i_t$ for some $t\in[k]$ in $\pi$, we see that $i_{t} <_r i_{t+1} <_r \cdots <_r i_k$; that is, the threshold path is a decreasing path from $i$ to $\alpha$ under the ordering $<_r$. 
It follows that the threshold path always contains the unique single edge on $\pi$ separating $i$ and $t$ as the leaf-to-root path used to define the threshold path is also decreasing. 
% \textcolor{blue}{M: Though I think that this is true, don't we need to argue that the threshold path always contains the single edge?}
% \textcolor{red}{L: We could add a sentence if you think it needs more explanation.  The edge was chosen so that it uniquely blocks the path out to $i$ from the edge itself. Since the leaf-to-root path is decreasing the vertex of the single edge furthest from i is less than any edge on the path.  So at best this vertex is $\alpha$.}
% \textcolor{blue}{M: I now get it. What I missed is that the leaf-to-root path are always decreasing. }
% \textcolor{red}{L: Good point though.  Maybe it was clear to me only because my head was in the proof.  Should we add some explanation here? What do you think?}

If $i<_r t$ it follows that the threshold path is the path $\pi_1$, and if $t <_r i$ it is $\pi_2$. 
In the former case, we say the threshold path is \emph{type 1}, and we say it is \emph{type 2} in the latter case. 
A threshold path of type 1 is \emph{blocking} if the portion of the path between $i$ and the first single edge consists only of undirected edges paired with directed edges pointing away from $i$ and the single edge is a directed edge pointing away from $i$ or a $\middlewave{0.5cm}$. 
A threshold path of type 2 is \emph{blocking} if the portion of the path between $i$ and the first single edge consists only of undirected edges paired with directed edges pointing away from $i$ and one of the following holds:
\begin{enumerate}
    \item The single edge is undirected and all directed edges on the threshold path point away from $i$, or
    \item the single edge is $\middlewave{0.5cm}$, or
    \item the single edge is a directed edge pointing away from $i$ and at least one directed edge on the portion of the path between $\alpha$ and this single edge points toward $i$. 
    % \textcolor{blue}{M: I probably should read further before asking but why do we need an edge pointing towards $i$ in (3)? What goes wrong if this edge is not present?} \textcolor{red}{L: I think it is explained in the proof of the theorem, but if you don't have this you will get a simple zig-zag obstruction of type 2 as all arrows on $\pi_2$ point toward $\alpha$ and $\pi_1$ has undirected edges along the whole path.}
\end{enumerate}

\begin{example}
    \label{ex: threshold paths}
    We consider some examples of threshold paths in the rightmost $Z_S$-component for the graph $G_S$ in Example~\ref{ex: Z_S-components}. 
    For instance, the threshold path for the vertex 22 in this $Z_S$ component is 
    \begin{center}
    \begin{tikzpicture}[thick,scale=0.4]
        \tikzset{decoration={snake,amplitude=.4mm,segment length=2mm, post length=0mm,pre length=0mm}}
	
	%---NODES---
        % \node[circle, draw = black!100, fill=black!00, inner sep=2pt, minimum width=2pt] (6) at (6,2) {};
        % \node[circle, draw = black!100, fill=black!00, inner sep=2pt, minimum width=2pt] (7) at (6,0) {};
        % \node[circle, draw = black!100, fill=black!00, inner sep=2pt, minimum width=2pt] (10) at (8,4) {};
        % \node[circle, draw = black!100, fill=black!00, inner sep=2pt, minimum width=2pt] (11) at (8,2) {};
        \node[circle, draw = black!100, fill=black!00, inner sep=2pt, minimum width=2pt] (13) at (8,-2) {};

        \node[circle, draw = black!100, fill=black!100, inner sep=2pt, minimum width=2pt] (16) at (10,-2) {};

        \node[circle, draw = black!100, fill=black!100, inner sep=2pt, minimum width=2pt] (17) at (12,0) {};
        % \node[circle, draw = black!100, fill=black!100, inner sep=2pt, minimum width=2pt] (18) at (12,-2) {};

        \node[circle, draw = black!100, fill=black!100, inner sep=2pt, minimum width=2pt] (19) at (14,2) {};
        % \node[circle, draw = black!100, fill=black!100, inner sep=2pt, minimum width=2pt] (20) at (14,-2) {};

        % \node[circle, draw = black!100, fill=black!00, inner sep=2pt, minimum width=2pt] (21) at (16,4) {};
        \node[circle, draw = black!100, fill=black!00, inner sep=2pt, minimum width=2pt] (22) at (16,2) {};
        % \node[circle, draw = black!100, fill=black!00, inner sep=2pt, minimum width=2pt] (23) at (16,0) {};

        % \node[circle, draw = black!100, fill=black!00, inner sep=2pt, minimum width=2pt] (24) at (16,-2) {};
        % \node[circle, draw = black!100, fill=black!00, inner sep=2pt, minimum width=2pt] (25) at (16,-4) {};

	%---EDGES---	 
        % \draw[decorate]   (6) -- (10) ;
        % \draw[-]   (7) edge (11) ;
        \draw[-]   (13) edge (16) ;

        \draw[-]   (16) edge (17) ;
        \draw[<-]   (16) edge[bend left] (17) ;
        % \draw[-]   (16) edge (18) ;
        % \draw[->]   (16) edge[bend right] (18) ;

        \draw[-]   (17) edge (19) ;
        \draw[<-]   (17) edge[bend right] (19) ;
        % \draw[-]   (18) edge (20) ;
        % \draw[<-]   (18) edge[bend right] (20) ;

        % \draw[decorate]   (19) -- (21) ;
        \draw[-]   (19) edge (22) ;
        \draw[<-]   (19) edge[bend left] (22) ;
        % \draw[-]   (19) edge (23) ;

        % \draw[<-]   (20) edge (24) ;
        % \draw[decorate]   (20) -- (25) ;
	 
	%---LABELS---
	% \node at (0,-1) {\footnotesize $i$} ;
	% \node at (2,-1) {\footnotesize $i+1$} ;

    \end{tikzpicture}
    \end{center}
    The unique edge on the leaf-to-root path from $22$ to $r = 1$ has first single edge the undirected edge between vertices $13$ and $16$ depicted here.  
    This edge critically separates the two nodes $13, 22\in\mathfrak{Z}_S$ (with respect to the set of all single edges in the $Z_S$-component), and hence the threshold path for $22$ is the entire path in the $Z_S$-component between $22$ to $13$. 
    Since $13 <_r 22$, this is a threshold path of type 2. 
    We see that it is blocking since it satisfies (1).  

    As a second example, consider the leaf node $25$ in the same $Z_S$-component.  The first edge on its leaf-to-root path is a single edge.  This edge critically separates nodes $22, 25\in\mathfrak{Z}_S$.  The path between these nodes is depicted on the left in the following, and the threshold path for $25$ is depicted on the right:
    \begin{center}
        \begin{tikzpicture}[thick,scale=0.4]
        \tikzset{decoration={snake,amplitude=.4mm,segment length=2mm, post length=0mm,pre length=0mm}}
	
	%---NODES---	 
%        \node[circle, draw = black!100, fill=black!100, inner sep=2pt, minimum width=2pt] (1) at (0,0) {};
%        \node[circle, draw = black!100, fill=black!100, inner sep=2pt, minimum width=2pt] (2) at (2,0) {};
%        \node[circle, draw = black!100, fill=black!100, inner sep=2pt, minimum width=2pt] (3) at (4,2) {};
%        \node[circle, draw = black!100, fill=black!100, inner sep=2pt, minimum width=2pt] (4) at (4,0) {};
%        \node[circle, draw = black!100, fill=black!100, inner sep=2pt, minimum width=2pt] (5) at (6,4) {};
        % \node[circle, draw = black!100, fill=black!00, inner sep=2pt, minimum width=2pt] (6) at (6,2) {};
        % \node[circle, draw = black!100, fill=black!00, inner sep=2pt, minimum width=2pt] (7) at (6,0) {};
%        \node[circle, draw = black!100, fill=black!100, inner sep=2pt, minimum width=2pt] (8) at (6,-2) {};
%        \node[circle, draw = black!100, fill=black!100, inner sep=2pt, minimum width=2pt] (9) at (8,6) {};
        % \node[circle, draw = black!100, fill=black!00, inner sep=2pt, minimum width=2pt] (10) at (8,4) {};
        % \node[circle, draw = black!100, fill=black!00, inner sep=2pt, minimum width=2pt] (11) at (8,2) {};
%        \node[circle, draw = black!100, fill=black!100, inner sep=2pt, minimum width=2pt] (12) at (8,0) {};
        % \node[circle, draw = black!100, fill=black!00, inner sep=2pt, minimum width=2pt] (13) at (8,-2) {};

%        \node[circle, draw = black!100, fill=black!00, inner sep=2pt, minimum width=2pt] (14) at (10,2) {};
%        \node[circle, draw = black!100, fill=black!100, inner sep=2pt, minimum width=2pt] (15) at (10,0) {};
        \node[circle, draw = black!100, fill=black!100, inner sep=2pt, minimum width=2pt] (16) at (10,-2) {};

        \node[circle, draw = black!100, fill=black!100, inner sep=2pt, minimum width=2pt] (17) at (12,0) {};
        \node[circle, draw = black!100, fill=black!100, inner sep=2pt, minimum width=2pt] (18) at (12,-2) {};

        \node[circle, draw = black!100, fill=black!100, inner sep=2pt, minimum width=2pt] (19) at (14,2) {};
        \node[circle, draw = black!100, fill=black!100, inner sep=2pt, minimum width=2pt] (20) at (14,-2) {};

        % \node[circle, draw = black!100, fill=black!00, inner sep=2pt, minimum width=2pt] (21) at (16,4) {};
        \node[circle, draw = black!100, fill=black!00, inner sep=2pt, minimum width=2pt] (22) at (16,2) {};
        % \node[circle, draw = black!100, fill=black!00, inner sep=2pt, minimum width=2pt] (23) at (16,0) {};

        % \node[circle, draw = black!100, fill=black!00, inner sep=2pt, minimum width=2pt] (24) at (16,-2) {};
        \node[circle, draw = black!100, fill=black!00, inner sep=2pt, minimum width=2pt] (25) at (16,-4) {};

	%---EDGES---	 
%	\draw[-]   (1) edge (2) ;
%        \draw[<-]   (1) edge[bend left] (2) ;
%        \draw[-]   (2) edge (3) ;
%        \draw[<-]   (2) edge[bend left] (3) ;
%        \draw[-]   (2) edge (4) ;
%        \draw[->]   (2) edge[bend right] (4) ;
%        \draw[-]   (3) edge (5) ;
%        \draw[<-]   (3) edge[bend left] (5) ;
%        \draw[<-]   (4) edge (6) ;
%        \draw[decorate]   (4) -- (7) ;
%        \draw[<-]   (4) edge (8) ;
%        \draw[-]   (5) edge (9) ;
%        \draw[->]   (5) edge[bend left] (9) ;
        % \draw[decorate]   (6) -- (10) ;
        % \draw[-]   (7) edge (11) ;
%        \draw[decorate]   (7) -- (12) ;
%        \draw[-]   (8) edge (13) ;
%        \draw[<-]   (8) edge[bend right] (13) ;
%        \draw[-]   (12) edge (14) ;
%        \draw[<-]   (12) edge[bend left] (14) ;
%        \draw[-]   (12) edge (15) ;
%        \draw[->]   (12) edge[bend right] (15) ;
        % \draw[-]   (13) edge (16) ;

        \draw[-]   (16) edge (17) ;
        \draw[<-]   (16) edge[bend left] (17) ;
        \draw[-]   (16) edge (18) ;
        \draw[->]   (16) edge[bend right] (18) ;

        \draw[-]   (17) edge (19) ;
        \draw[<-]   (17) edge[bend right] (19) ;
        \draw[-]   (18) edge (20) ;
        \draw[<-]   (18) edge[bend right] (20) ;

        % \draw[decorate]   (19) -- (21) ;
        \draw[-]   (19) edge (22) ;
        \draw[<-]   (19) edge[bend left] (22) ;
        % \draw[-]   (19) edge (23) ;

        % \draw[<-]   (20) edge (24) ;
        \draw[decorate]   (20) -- (25) ;
	 
	%---LABELS---
	% \node at (0,-1) {\footnotesize $i$} ;
	% \node at (2,-1) {\footnotesize $i+1$} ;

    \end{tikzpicture}\hspace{1cm}
    \begin{tikzpicture}[thick,scale=0.4]
        \tikzset{decoration={snake,amplitude=.4mm,segment length=2mm, post length=0mm,pre length=0mm}}
	
	%---NODES---	 
%        \node[circle, draw = black!100, fill=black!100, inner sep=2pt, minimum width=2pt] (1) at (0,0) {};
%        \node[circle, draw = black!100, fill=black!100, inner sep=2pt, minimum width=2pt] (2) at (2,0) {};
%        \node[circle, draw = black!100, fill=black!100, inner sep=2pt, minimum width=2pt] (3) at (4,2) {};
%        \node[circle, draw = black!100, fill=black!100, inner sep=2pt, minimum width=2pt] (4) at (4,0) {};
%        \node[circle, draw = black!100, fill=black!100, inner sep=2pt, minimum width=2pt] (5) at (6,4) {};
        % \node[circle, draw = black!100, fill=black!00, inner sep=2pt, minimum width=2pt] (6) at (6,2) {};
        % \node[circle, draw = black!100, fill=black!00, inner sep=2pt, minimum width=2pt] (7) at (6,0) {};
%        \node[circle, draw = black!100, fill=black!100, inner sep=2pt, minimum width=2pt] (8) at (6,-2) {};
%        \node[circle, draw = black!100, fill=black!100, inner sep=2pt, minimum width=2pt] (9) at (8,6) {};
        % \node[circle, draw = black!100, fill=black!00, inner sep=2pt, minimum width=2pt] (10) at (8,4) {};
        % \node[circle, draw = black!100, fill=black!00, inner sep=2pt, minimum width=2pt] (11) at (8,2) {};
%        \node[circle, draw = black!100, fill=black!100, inner sep=2pt, minimum width=2pt] (12) at (8,0) {};
        % \node[circle, draw = black!100, fill=black!00, inner sep=2pt, minimum width=2pt] (13) at (8,-2) {};

%        \node[circle, draw = black!100, fill=black!00, inner sep=2pt, minimum width=2pt] (14) at (10,2) {};
%        \node[circle, draw = black!100, fill=black!100, inner sep=2pt, minimum width=2pt] (15) at (10,0) {};
        \node[circle, draw = black!100, fill=black!100, inner sep=2pt, minimum width=2pt] (16) at (10,-2) {};

        % \node[circle, draw = black!100, fill=black!100, inner sep=2pt, minimum width=2pt] (17) at (12,0) {};
        \node[circle, draw = black!100, fill=black!100, inner sep=2pt, minimum width=2pt] (18) at (12,-2) {};

        % \node[circle, draw = black!100, fill=black!100, inner sep=2pt, minimum width=2pt] (19) at (14,2) {};
        \node[circle, draw = black!100, fill=black!100, inner sep=2pt, minimum width=2pt] (20) at (14,-2) {};

        % \node[circle, draw = black!100, fill=black!00, inner sep=2pt, minimum width=2pt] (21) at (16,4) {};
        % \node[circle, draw = black!100, fill=black!00, inner sep=2pt, minimum width=2pt] (22) at (16,2) {};
        % \node[circle, draw = black!100, fill=black!00, inner sep=2pt, minimum width=2pt] (23) at (16,0) {};

        % \node[circle, draw = black!100, fill=black!00, inner sep=2pt, minimum width=2pt] (24) at (16,-2) {};
        \node[circle, draw = black!100, fill=black!00, inner sep=2pt, minimum width=2pt] (25) at (16,-4) {};

	%---EDGES---	 
%	\draw[-]   (1) edge (2) ;
%        \draw[<-]   (1) edge[bend left] (2) ;
%        \draw[-]   (2) edge (3) ;
%        \draw[<-]   (2) edge[bend left] (3) ;
%        \draw[-]   (2) edge (4) ;
%        \draw[->]   (2) edge[bend right] (4) ;
%        \draw[-]   (3) edge (5) ;
%        \draw[<-]   (3) edge[bend left] (5) ;
%        \draw[<-]   (4) edge (6) ;
%        \draw[decorate]   (4) -- (7) ;
%        \draw[<-]   (4) edge (8) ;
%        \draw[-]   (5) edge (9) ;
%        \draw[->]   (5) edge[bend left] (9) ;
        % \draw[decorate]   (6) -- (10) ;
        % \draw[-]   (7) edge (11) ;
%        \draw[decorate]   (7) -- (12) ;
%        \draw[-]   (8) edge (13) ;
%        \draw[<-]   (8) edge[bend right] (13) ;
%        \draw[-]   (12) edge (14) ;
%        \draw[<-]   (12) edge[bend left] (14) ;
%        \draw[-]   (12) edge (15) ;
%        \draw[->]   (12) edge[bend right] (15) ;
        % \draw[-]   (13) edge (16) ;

        % \draw[-]   (16) edge (17) ;
        % \draw[<-]   (16) edge[bend left] (17) ;
        \draw[-]   (16) edge (18) ;
        \draw[->]   (16) edge[bend right] (18) ;

        % \draw[-]   (17) edge (19) ;
        % \draw[<-]   (17) edge[bend right] (19) ;
        \draw[-]   (18) edge (20) ;
        \draw[<-]   (18) edge[bend right] (20) ;

        % \draw[decorate]   (19) -- (21) ;
        % \draw[-]   (19) edge (22) ;
        % \draw[<-]   (19) edge[bend left] (22) ;
        % \draw[-]   (19) edge (23) ;

        % \draw[<-]   (20) edge (24) ;
        \draw[decorate]   (20) -- (25) ;
	 
	%---LABELS---
	% \node at (0,-1) {\footnotesize $i$} ;
	% \node at (2,-1) {\footnotesize $i+1$} ;

    \end{tikzpicture}
    \end{center}
    Since $22 <_r 25$, the threshold path for $25$ is also type 2, and we see that it is blocking by (2). 

    As a third, and final example, consider the root-to-leaf path from node $21$ in the same $Z_S$-component. 
    As the first edge on this path is a single edge which critically separates nodes $21$ and $22$, we have that the path between nodes $21$ and $22$ is that depicted on the left, and the threshold path for $21$ is that depicted on the right:
    \begin{center}
    \begin{tikzpicture}[thick,scale=0.4]
        \tikzset{decoration={snake,amplitude=.4mm,segment length=2mm, post length=0mm,pre length=0mm}}
	
	%---NODES---	 
        \node[circle, draw = black!100, fill=black!100, inner sep=2pt, minimum width=2pt] (19) at (14,2) {};
        \node[circle, draw = black!100, fill=black!00, inner sep=2pt, minimum width=2pt] (21) at (16,4) {};
        \node[circle, draw = black!100, fill=black!00, inner sep=2pt, minimum width=2pt] (22) at (16,2) {};

	%---EDGES---	 
        \draw[decorate]   (19) -- (21) ;
        \draw[-]   (19) edge (22) ;
        \draw[<-]   (19) edge[bend left] (22) ;
	 
	%---LABELS---
	% \node at (0,-1) {\footnotesize $i$} ;
	% \node at (2,-1) {\footnotesize $i+1$} ;

    \end{tikzpicture}\hspace{1cm}
    \begin{tikzpicture}[thick,scale=0.4]
        \tikzset{decoration={snake,amplitude=.4mm,segment length=2mm, post length=0mm,pre length=0mm}}
	
	%---NODES---	
        \node[circle, draw = black!100, fill=black!100, inner sep=2pt, minimum width=2pt] (19) at (14,2) {};
        \node[circle, draw = black!100, fill=black!00, inner sep=2pt, minimum width=2pt] (21) at (16,4) {};
        % \node[circle, draw = black!100, fill=black!00, inner sep=2pt, minimum width=2pt] (22) at (16,2) {};

	%---EDGES---
        \draw[decorate]   (19) -- (21) ;
        % \draw[-]   (19) edge (22) ;
        % \draw[<-]   (19) edge[bend left] (22) ;
	 
	%---LABELS---
	% \node at (0,-1) {\footnotesize $i$} ;
	% \node at (2,-1) {\footnotesize $i+1$} ;

    \end{tikzpicture}
    \end{center}
    Since $21 <_r 22$ this is a threshold path of type 1, which is seen to be blocking since the single edge is a $\middlewave{0.5cm}$. 
\end{example}

We will use the notion of blocking paths in our characterization of the facets of the triangulation of $\mathcal{C}_T$. 
We additionally require one more type of path. 
Given a node $i$ of the graph $T$ we say that a node $j$ \emph{covers} $i$ if $i <_r j$ and no node along the unique path from $j$ to $r$ in $T$ is larger than $i$. 
Let $\pi = \{i_1i_2,\ldots, i_{k-1}i_k\}$ be the unique path in $T$ between $i$ and $j$ and let $\alpha = \min_{<_r}\{i_1,\ldots, i_k\}$. 
A \emph{partially directed branching} from $j$ to $i$ is a partial orientation of $\pi$ such that all edges along the path between $i$ and $\alpha$ are undirected and all edges along the path between $j$ and $\alpha$ are directed toward $\alpha$. 

\begin{example}
    \label{ex: partially directed branching}

    \begin{figure}
     \centering
     \begin{subfigure}[b]{0.45\textwidth}
         \centering
         \begin{tikzpicture}[thick,scale=0.4]
        \tikzset{decoration={snake,amplitude=.4mm,segment length=2mm, post length=0mm,pre length=0mm}}
	
	%---NODES---	
        \node[circle, draw = black!100, fill=black!00, inner sep=2pt, minimum width=2pt] (13) at (8,-2) {};

        \node[circle, draw = black!100, fill=black!100, inner sep=2pt, minimum width=2pt] (16) at (10,-2) {};

        \node[circle, draw = black!100, fill=black!100, inner sep=2pt, minimum width=2pt] (17) at (12,0) {};
        \node[circle, draw = black!100, fill=black!100, inner sep=2pt, minimum width=2pt] (18) at (12,-2) {};

        \node[circle, draw = black!100, fill=black!100, inner sep=2pt, minimum width=2pt] (19) at (14,2) {};
        \node[circle, draw = black!100, fill=black!100, inner sep=2pt, minimum width=2pt] (20) at (14,-2) {};

        \node[circle, draw = black!100, fill=black!00, inner sep=2pt, minimum width=2pt] (21) at (16,4) {};
        \node[circle, draw = black!100, fill=black!00, inner sep=2pt, minimum width=2pt] (22) at (16,2) {};
        \node[circle, draw = black!100, fill=black!00, inner sep=2pt, minimum width=2pt] (23) at (16,0) {};

        \node[circle, draw = black!100, fill=black!00, inner sep=2pt, minimum width=2pt] (24) at (16,-2) {};
        \node[circle, draw = black!100, fill=black!00, inner sep=2pt, minimum width=2pt] (25) at (16,-4) {};

	%---EDGES---	
        \draw[-]   (13) edge (16) ;

        \draw[-]   (16) edge (17) ;
        \draw[<-]   (16) edge[bend left] (17) ;
        \draw[-]   (16) edge (18) ;
        \draw[->]   (16) edge[bend right] (18) ;

        \draw[-]   (17) edge (19) ;
        \draw[<-]   (17) edge[bend right] (19) ;
        \draw[-]   (18) edge (20) ;
        \draw[<-]   (18) edge[bend right] (20) ;

        \draw[decorate]   (19) -- (21) ;
        \draw[-]   (19) edge (22) ;
        \draw[<-]   (19) edge[bend left] (22) ;
        \draw[-]   (19) edge (23) ;

        \draw[<-]   (20) edge (24) ;
        \draw[decorate]   (20) -- (25) ;
	 
	%---LABELS---
	% \node at (0,-1) {\footnotesize $i$} ;
	% \node at (2,-1) {\footnotesize $i+1$} ;

    \end{tikzpicture}
         \caption{A $Z_S$-component of the graph $G_S$ in Figure~\ref{fig: G_S}.}
         \label{fig: Z_S}
     \end{subfigure}
     \hfill
     \begin{subfigure}[b]{0.45\textwidth}
         \centering
         \begin{tikzpicture}[thick,scale=0.4]
        \tikzset{decoration={snake,amplitude=.4mm,segment length=2mm, post length=0mm,pre length=0mm}}
	
	%---NODES---	
        \node[circle, draw = black!100, fill=black!00, inner sep=2pt, minimum width=2pt] (13) at (8,-2) {};

        \node[circle, draw = black!100, fill=black!100, inner sep=2pt, minimum width=2pt] (16) at (10,-2) {};

        \node[circle, draw = black!100, fill=black!100, inner sep=2pt, minimum width=2pt] (17) at (12,0) {};
        \node[circle, draw = black!100, fill=black!100, inner sep=2pt, minimum width=2pt] (18) at (12,-2) {};

        \node[circle, draw = black!100, fill=black!100, inner sep=2pt, minimum width=2pt] (19) at (14,2) {};
        \node[circle, draw = black!100, fill=black!100, inner sep=2pt, minimum width=2pt] (20) at (14,-2) {};

        \node[circle, draw = black!100, fill=black!00, inner sep=2pt, minimum width=2pt] (21) at (16,4) {};
        \node[circle, draw = black!100, fill=black!00, inner sep=2pt, minimum width=2pt] (22) at (16,2) {};
        \node[circle, draw = black!100, fill=black!00, inner sep=2pt, minimum width=2pt] (23) at (16,0) {};

        \node[circle, draw = black!100, fill=black!00, inner sep=2pt, minimum width=2pt] (24) at (16,-2) {};
        \node[circle, draw = black!100, fill=black!00, inner sep=2pt, minimum width=2pt] (25) at (16,-4) {};

	%---EDGES---	
        \draw[-]   (13) edge (16) ;

        \draw[-,red]   (16) edge (17) ;
        \draw[<-]   (16) edge[bend left] (17) ;
        \draw[-]   (16) edge (18) ;
        \draw[<-,red]   (16) edge[bend right] (18) ;

        \draw[-,red]   (17) edge (19) ;
        \draw[<-]   (17) edge[bend right] (19) ;
        \draw[-]   (18) edge (20) ;
        \draw[<-,red]   (18) edge[bend right] (20) ;

        \draw[decorate]   (19) -- (21) ;
        \draw[-]   (19) edge (22) ;
        \draw[<-]   (19) edge[bend left] (22) ;
        \draw[-,red]   (19) edge (23) ;

        \draw[<-,red]   (20) edge (24) ;
        \draw[decorate]   (20) -- (25) ;
	 
	%---LABELS---
	% \node at (0,-1) {\footnotesize $i$} ;
	% \node at (2,-1) {\footnotesize $i+1$} ;

    \end{tikzpicture}
         \caption{The $Z_S$-component in Figure~\ref{fig: Z_S} with one edge direction reversed.}
         \label{fig: Z_S aug}
     \end{subfigure}
        \caption{Examples and non-examples of partially directed branchings.  An example of a partially directed branching appears in red.}
        \label{fig: covers and branching}
\end{figure}
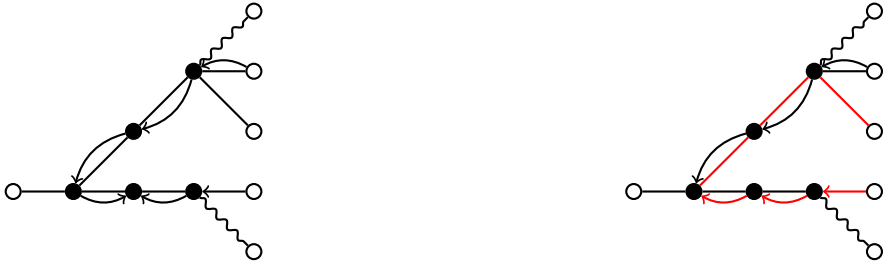

    We consider the $Z_S$-component depicted in Figure~\ref{fig: Z_S} for the graph $G_S$ in Figure~\ref{fig: G_S}.
    The node $23$ is covered by nodes $24$ and $25$. We see from inspection of the graph that there are no partially directed branchings in this $Z_S$-component from $24$ to $23$ nor from $25$ to $23$.  
    However, if we were to reverse the direction of the edge between nodes $16$ and $18$ we would then have a partially directed branching from $24$ to $23$, as depicted in red in Figure~\ref{fig: Z_S aug}.
\end{example}

The following theorem characterizes the facets of the triangulation of $\mathcal{C}_T$ for $T$ a tree under the specified good term order.

\begin{theorem}
    \label{thm: tree facet characterization}
    Let $S$ be a subset of generators of $R_T$ where $T$ is a tree on node set $[n+1]$. 
    Let $\overline{G}_{S,1},\ldots, \overline{G}_{S,M}$ be the $Z_S$-components of $G_S$. The set $S$ is a facet of the triangulation $\mathcal{T}$ of $\mathcal{C}_T$ corresponding to a lexicographic order induced by the order $<$ if and only if the following hold:
    \begin{enumerate}
        \item $G_S$ is connected,
        \item $G_S$ contains only single and double edges, where all double edges are of the form
        \begin{center}
    \begin{tikzpicture}[thick,scale=0.5]
        \tikzset{decoration={snake,amplitude=.4mm,segment length=2mm, post length=0mm,pre length=0mm}}
	
	%---NODES---	 
        \node[circle, draw = black!00, fill=black!00, inner sep=2pt, minimum width=2pt] (1) at (0,0) {};
        \node[circle, draw = black!00, fill=black!00, inner sep=2pt, minimum width=2pt] (2) at (2,0) {};

        \node[circle, draw = black!00, fill=black!00, inner sep=2pt, minimum width=2pt] (3) at (10,0) {};
        \node[circle, draw = black!00, fill=black!00, inner sep=2pt, minimum width=2pt] (4) at (12,0) {};

	%---EDGES---	 
	\draw[->]   (1) edge[bend left] (2) ;
        \draw[-]    (1) -- (2) ;

        \draw[<-]   (3) edge[bend left] (4) ;
        \draw[-]    (3) -- (4) ;

        \node at (6,0) {or} ;
    \end{tikzpicture}
    \end{center}
    \item For all $j\in[M]$
    \begin{enumerate}
        \item $\overline{G}_{S,j}$ contains exactly $|V(\overline{G}_{S,j})\cap\mathfrak{Z}_S| - 1$ single edges,
        \item for all $i\in V(\overline{G}_{S,j})\cap(\mathfrak{Z}_S\setminus\{r_{S,j}\})$ the threshold path for $i$ is blocking,
        \item for all $i\in V(\overline{G}_{S,j})\cap(\mathfrak{Z}_S\setminus\{r_{S,j}\})$ there are no partially directed branchings to $i$ from a node that covers $i$, and 
        \item if $r_{S,j}\in\mathfrak{Z}_S$, then the edges incident to it are either a single $\middlewave{0.5cm}$, a single undirected edge, or an undirected edge together with an edge directed away from $r_{S,j}$. 
    \end{enumerate}
    \end{enumerate}
\end{theorem}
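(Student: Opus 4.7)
The plan is to prove both directions of the biconditional. For the forward direction, assuming that $S$ is a facet, I will deduce each of (1), (2), (3a), (3b), (3c), (3d) separately. Items (1), (2), and (3a) follow by combining \Cref{prop: connected} (which gives that the support graph of $G_S$ equals $T$ and hence $G_S$ is connected) with \Cref{lem: facets means connected for trees} (which yields both the allowed form of double edges and the correct single-edge count within each $Z_S$-component). For (3b), (3c), (3d), I will argue contrapositively: if any of these conditions fails, I will exhibit a simple zig-zag obstruction in $G_S$, contradicting $S$ being a facet. Specifically, a failure of (3b) along a threshold path produces, by construction, the exact graphical pattern of the leading term of a simple zig-zag binomial of type 1 or 2; a failure of (3c), i.e., the presence of a partially directed branching from a covering vertex $j$ to a $\circ$-vertex $i$, is essentially by definition the graphical pattern of the leading term of a simple zig-zag binomial of type 2 ending at $i$; and a failure of (3d) at the root $r_{S,j}$ of a component yields either a fundamental obstruction or the initial segment of a simple zig-zag obstruction of type 1 along the corresponding branch.

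For the reverse direction, I will assume (1)--(3) and show $S$ is a facet by verifying both $|S| = 2n+1$ and that $G_S$ corresponds to a face of $\mathcal{T}$. The size count follows from (1), (2), and (3a) via a direct computation: since $G_S$ has support graph $T$ with $n$ edges and every edge is single or double, we have $|S| = |\mathfrak{Z}_S| + s + 2(n-s)$, where $s$ is the total number of single edges. A global count across all $Z_S$-components, using (3a) together with the fact that each vertex in $V(T) \setminus \mathfrak{Z}_S$ belongs to exactly one component while each $v \in \mathfrak{Z}_S$ belongs to $\deg_T(v)$ components, yields $s = |\mathfrak{Z}_S| - 1$, and hence $|S| = 2n+1 = \dim(\mathcal{C}_T)+1$. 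It remains to show that $G_S$ avoids every fundamental obstruction and every simple zig-zag obstruction. The former is immediate from (2) together with (3d). For the latter, the key observation is that any simple zig-zag obstruction in $G_S$ has a $\circ$-vertex at one end $i$, and since fundamental obstructions are already excluded, its interior may only pass through $\bullet$-vertices; the obstruction therefore lies inside a single $Z_S$-component, where (3b) rules out simple zig-zag obstructions of type 1 ending at $i$ (controlled by the threshold path above $i$) and (3c) rules out those of type 2 ending at $i$ (arising from a covering vertex of $i$).

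The hardest step will be establishing the fine correspondence in the arguments for (3b) and (3c) between the failure of these combinatorial conditions and the realization of simple zig-zag obstructions inside $G_S$. Simple zig-zag obstructions come in two structurally distinct types whose appearance in a $Z_S$-component may involve the root $r_{S,j}$, may traverse multiple branches of the component, and may terminate at any $\circ$-vertex. I expect to carry out a careful case analysis distinguishing (i) the position of the $\circ$-endpoint $i$ relative to the least vertex $\alpha$ of the underlying zig-zag path under $<_r$, (ii) whether the zig-zag pair is of type 1 (contained above $i$ in the tree order) or type 2 (crossing through a covering vertex), and (iii) how the presence or absence of directed edges adjacent to $i$ interacts with the single edge critically separating $i$ from another $\circ$-vertex. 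Matching each graphical configuration to the corresponding leading term of the Gr\"obner basis via \Cref{lem: simple zig-zags} will then complete both directions of the argument.
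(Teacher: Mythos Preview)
Your plan is correct and matches the paper's proof in overall architecture: in the forward direction you derive (1), (2), (3a) from \Cref{prop: connected} and \Cref{lem: facets means connected for trees}, and handle (3b)--(3d) by exhibiting the relevant obstruction when each fails; in the reverse direction you check the cardinality and then rule out fundamental and simple zig-zag obstructions from (2), (3b), (3c), (3d).

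The one genuine departure is how you establish $|S|=2n+1$ in the converse. The paper argues by induction on the number $M$ of $Z_S$-components, peeling off a component containing a sink of $\overrightarrow{T}$ at each step. Your global double-count is more direct: using that each edge of $T$ lies in exactly one $Z_S$-component while each $v\in\mathfrak{Z}_S$ lies in $\deg_T(v)$ of them, one gets $\sum_j m_j=\sum_{v\in\mathfrak{Z}_S}\deg_T(v)$ and $M=1-|\mathfrak{Z}_S|+\sum_{v\in\mathfrak{Z}_S}\deg_T(v)$, so by (3a) the total number of single edges is $\sum_j(m_j-1)=|\mathfrak{Z}_S|-1$, whence $|S|=|\mathfrak{Z}_S|+2n-(|\mathfrak{Z}_S|-1)=2n+1$. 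This is a clean replacement for the paper's induction. One small refinement worth making explicit in your write-up: when you assert that a simple zig-zag obstruction ``lies inside a single $Z_S$-component,'' justify it by taking the minimal such obstruction ending at a $\circ$-vertex and observing that any interior $\circ$-vertex would already yield a strictly shorter obstruction (or, on the first edge, a fundamental obstruction), so interior vertices are $\bullet$ and the path stays within one component. The paper is equally terse at this spot, but spelling it out will make your (3b)/(3c) step airtight.
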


\begin{proof}
Assume first that $S$ is a facet.  
By Proposition~\ref{prop: connected} we know that (1) is satisfied.  
By Lemma~\ref{lem: facets means connected for trees} we know that (2) and (3)(a) are also satisfied. 

To see that (3)(b) holds, consider a leaf-to-root path from $i\in V(\overline{G}_{S,j})\cap(\mathfrak{Z}_S\setminus\{r_{S,j}\})$ and the portion of this path up to and including its first single edge. 
We note that this is a subpath of the threshold path for $i$.  
Since $S$ is a facet, all edges before the single edge are double, and by (2) they are of the specified form above. 
Since the path is leaf-to-root, then reading its vertices from the single edge out toward $i$ is an increasing sequence of nodes under the ordering $<_r$.  
Hence, if any of these double edges contains an arrow pointing toward $i$, then $G_S$ would contain a simple zig-zag obstruction of type 1, which is a contradiction to $S$ being a facet. 

Consider now the unique pair of vertices in $V(\overline{G}_{S,j})\cap\mathfrak{Z}_S$ separated by the single edge on this path, one of which is $i$ and the other of which we denote by $t$. 
Denote this path by $\pi$, and consider its associated $\alpha$-value, and the two paths $\pi_1$ and $\pi_2$ between $\alpha$ and $i$ and $\alpha$ and $t$. 
Here, we let $\pi_1$ denote the path between $\alpha$ and the least of the two vertices $i$ and $t$ under $<_r$, and $\pi_2$ denote the path between $\alpha$ and the largest of the two.
Note that the single edge is always on the path $\pi_1$ or $\pi_2$ that contains $i$.
Since the given single edge is the unique separator of $i$ and $t$, we know that all other edges on this path are double and of the form specified in (2). 
If $i <_r t$ and the single edge is on $\pi_1$ 
% \textcolor{blue}{M: I am once more confused. Doesn't the single edge necessarily lie on the path between $i$ and $\alpha$, just because it lies on the leaf-to root path for $i$ and this path is decreasing. It seems that you are also using this.} \textcolor{red}{L: yeah I thought this may confuse people. :D  The key was in the parenthetical after this discussion.  Here, I'm using $\pi_1$ to refer to the path from the smallest node to $\alpha$ and $\pi_2$ to refer to the path from the largest node to $\alpha$.  Since the two cases switch between which node is large then the path containing $i$ switches from $\pi_1$ to $\pi_2$.} 
(the path from the least of the two nodes $i$ and $t$ to $\alpha$) then $\pi_2$ must consist of only double edge directed toward $\alpha$. 
Otherwise, $G_S$ would contain a simple zig-zag obstruction of type 1.  
Since all edges on $\pi_1$ are also double edges except for the single edge, $G_S$ would contain a simple zig-zag obstruction of type 2 if the single edge were undirected. 
Since it would contain a simple zig-zag obstruction of type 1 if the edge were directed toward $i$, the only valid options are a $\middlewave{0.5cm}$ or an edge directed toward $\alpha$.  
Hence, the threshold path for $i$ is of type 1 and blocking. 

On the other hand, if $t <_r i$, then the unique single edge on the path between $i$ and $t$ would lie on $\pi_2$ (i.e., the path from the largest of the two nodes $i$ and $t$ to $\alpha$).  
Hence, all edges on $\pi_1$ are double and directed toward $\alpha$ to avoid simple zig-zag obstructions of type 1.
We note that the single edge cannot be a directed edge toward $i$ as this would lead to a simple zig-zag obstruction of type 1 as well. 
If the edge is undirected, then $G_S$ contains no simple zig-zag obstructions of type $1$ only if all directed edges on $\pi_2$ point toward $\alpha$. 
If the single edge is $\middlewave{0.5cm}$, all directed edges between the single edge and $i$ must point toward $\alpha$.  
Finally, if the single edge is directed toward $\alpha$ there must be at least one directed edge between $\alpha$ and the single edge pointing toward $i$. 
Otherwise, $G_S$ would contain a simple zig-zag obstruction of type 2. 
Hence, the threshold path must also be blocking in this case. 
Therefore, (3)(b) holds for $S$ a facet. 

To see that (3)(c) holds, note that if there was such a partially directed branching then $G_S$ would necessarily contain a simple zig-zag obstruction of type 2, a contradiction to $S$ being a facet. 

To see that (3)(d) holds, note that any other choice of edge configuration at $r_{S,j}$ would imply that $G_S$ contains a fundamental obstruction. 
Hence, the listed conditions are all satisfied if $S$ is a facet of the triangulation. 

Conversely, suppose that the listed conditions are satisfied by $S$. 
Let $|Z_S| = n+1-k$.  
We will show now that (3)(a) implies that $G_S$ contains exactly $k$ double edges.  
Since $G_S$ is a connected graph on vertex set $[n+1]$, it contains at least $n$ edges.   
If exactly $k$ of these edges are double then $|S| = 2n+1$, which is the correct dimension for $S$ to be a facet. 
% \textcolor{blue}{M: Here, you use that the support graph of $G_S$ equals $G$ and hence every edge appears at least in some kind of undirected version.}
% \textcolor{red}{L: Sorry, yes now I see the confusion.  When I use that $G_S$ is connected I am using that every edge in the support graph is represented by at least one edge in $G_S$.  Hence, $G_S$ contains at least $n$ edges. I updated the above sentence to try and reflect this.}
% \textcolor{blue}{M: Ok, I see. I forgot that we defined the  support graph to be a graph on vertex set $V$.}
To see this claim, assume (3)(a) holds; i.e., assume that each $\overline{G}_{S,j}$ contains exactly $m_j-1$ single edges, where we let $m_j = |V(\overline{G}_{S,j})\cap\mathfrak{Z}_S|$. 
% We will see that this is a necessary condition to ensure that $G_S$ contains exactly $k$ double edges, as we saw is required of a facet earlier in the proof. 
% 
We induct on the number $M\geq1$ of $Z_S$-components in $G_S$.  
The result is seen to hold in the case that $M = 1$, as in this case $m_j - 1 = (n+1 - k) - 1$. 
Suppose now that the result holds for $G_S$ with at most $M-1 \geq 0$ $Z_S$-components, and consider $G_S$ with $M$ $Z_S$-components. 
Since $M > 1$, there is at least one $Z_S$-component containing a sink node of $\overrightarrow{T}$ that does not contain the root node $r$.  
Suppose this component is $\overline{G}_{S,j}$ and that its support graph has $n_j$ edges. 
Note that $r_{S,j}$ is necessarily a $\circ$ node, as $r_{S,j} \neq r$. 

We then have that the subgraph of $T$ given by deleting all edges of the support graph of $\overline{G}_{S,j}$ is a tree containing $n-n_j$ edges.  
Since $\overline{G}_{S,j}$ does not contain $r$ and does contain a sink node of $\overrightarrow{T}$, deleting all vertices in $V(\overline{G}_{S,j})\setminus \{r_{S,j}\}$ and all edges incident to these vertices from $G_S$, results in a graph $\widetilde{G}_{\tilde{S}}$ that contains $n+1-k-(m_j-1) = (n+1 - k) -m_j+1$ $\circ$ nodes. 
By the inductive hypothesis, $\widetilde{G}_{\widetilde{S}}$ contains $(n+1-k) - m_j$ single edges. 
By assumption, $\overline{G}_{S,j}$ contains $m_j-1$ single edges.  
Hence, $G_S$ contains $n-k$ single edges, or equivalently, $k$ double edges.  
% 
% 
% Under this assumption, consider first all $\overline{G}_{S,i}$ that contain a sink node of $\overrightarrow{T}$.
% If each such $\overline{G}_{S,i}$ contains exactly $m_i-1$ single edges then these edges are in bijection with the nodes $V(\overline{G}_{S,i})\cap(\mathfrak{Z}_S\setminus\{r_{S,i}\})$. 
% % By making such an identification, converting the $\circ$ nodes identified with edges to $\bullet$, deleting all edges in the considered $\overline{G}_{S,i}$, and iterating this process, we are left with a forest in which any components containing edges contain exactly one white node
% % 
% By making such an identification, then deleting the induced subgraphs of $G_S$ on the node sets $V(\overline{G}_{S,i})\setminus \{r_{S,i}\}$, and iterating this process, 
% % \textcolor{blue}{M: Sorry, I am once more confused. What do you do with the edges coming out or $r_{S,i}$ and what happens with the other white vertices in the component? Don't you just want to delete the $Z_S$-components iteratively but keep vertices that lie in other components as well?} 
% we will be left with a single connected subgraph of $G_S$ that contains the root node $r$ and exactly one $\circ$ node. 
% It follows that there are exactly $n+1-k-1 = n-k$ single edges in $G_S$. 
% In other words, $G_S$ contains exactly $k$ double edges. 

Since $|S| = 2n+1$, which is the correct size for $S$ to be a facet, it only remains to see that the set $S$ contains no fundamental obstructions or simple zig-zag obstructions.  
The fact that $G_S$ contains no fundamental obstructions follows from (2) together with (3)(d) and (3)(b) (when we consider the definition of blocking).  
The fact that $G_S$ contains no simple zig-zag obstructions follows from (3)(b) and (3)(c), which completes the proof. 
\end{proof}

% \begin{example}
%     \label{ex: tree facets} 
%     Example of facets for the 3-star.
%     {\color{red}{Lo: It would be great to have a figure with all te 64 facets. I don't use tikz that much: if I give you the list of facets is there a way to tell tikz how to produce them so we don't have to do it 64 times?}}
% \end{example}

\section{Open problems}

We conclude with a few problems of interest left open by the article. As we worked out in the case of cycles and trees, a combinatorial analysis of the Gr\"{o}bner basis presented in \Cref{sec: grobner basis} reveals an explicit facet description for the corresponding triangulation. Moreover, one of the features of having a regular unimodular triangulation is that the computation of the volume can be reduced to counting the facets. It would be interesting to push this understanding further. 

\begin{problem}
    Obtain a facet description for a regular unimodular triangulation of the cosmological polytope of more general families of graphs. Can the volume of the cosmological polytope of an arbitrary graph be expressed in terms of elementary graph invariants?
\end{problem}

From a combinatorial viewpoint one is typically interested in finer invariants than the volume of a lattice polytope. 
One popular instance is the \emph{$h^*$-polynomial}; that is, the  univariate polynomial with integer coefficients which arises as the numerator of the \emph{Ehrhart series} of a lattice polytope (see for instance \cite[Chapter 3]{ccd}). 
As an example, we observed experimentally that the $h^*$-polynomial of the cosmological polytope of a tree on $n+1$ vertices equals $h^*(t)=(1+3t)^n$.

\begin{problem}
	Find formulas for the $h^*$-polynomial of a cosmological polytope $\mathcal{C}_G$ in terms of graph invariants of $G$.
\end{problem}

Finally, since, as we commented in the introduction, the computation of the canonical form of a polytope can be reduced to computing the canonical forms of the facets of any triangulation, we propose the following problem:

\begin{problem}
	Describe triangulations of cosmological polytopes with the minimum number of facets.
\end{problem}

Choosing a lexicographic term order for which $z$-variables are larger than the other variables, the corresponding initial ideal will contain all squares of  $z$-variables as generators, since each of them is the  leading term of some fundamental binomial. While the facets of the triangulation obtained in this way do not all have minimum volume, their number can be significantly smaller than in the unimodular case. For example, experiments suggest that this idea gives a triangulation of the cosmological polytope of a tree with $n+1$ vertices which consists of $2^{n-1}$ facets, compared to the $4^n$ many of a unimodular triangulation.

\subsection*{Acknowledgements}
The authors would like to thank Paolo Benincasa, Lukas K\"uhne and Leonid Monin for helpful discussions. 
We would also like to thank the \emph{2022 Combinatorial Coworkspace: A Session in Algebraic and Geometric Combinatorics} at Haus Bergkranz in Kleinwalsertal, Austria where this work began.
Liam Solus was supported by the Wallenberg
Autonomous Systems and Software Program (WASP) funded by the Knut and Alice Wallenberg Foundation, the Digital Futures Lab at KTH, the G\"oran Gustafsson Stiftelse Prize for Young Researchers, and Starting Grant No. 2019-05195
from The Swedish Research Council (Vetenskapsr\aa{}det).

%---BIBLIOGRAPHY---
\bibliographystyle{plain}
\bibliography{cosmo_bib}

\end{document}